\newtheorem{Proposition}{Proposition}[section]
\newtheorem{Lemma}[Proposition]{Lemma}
\newtheorem{Theorem}[Proposition]{Theorem}
\newtheorem{Corollary}[Proposition]{Corollary}
\def\C{{\mathbb C}}
\def\Z{{\mathbb Z}}
\def\eps{\epsilon}
\def\sgn{{\operatorname{sgn}}}
\def\rdet{\operatorname{rdet}}
\def\row{\operatorname{row}}
\def\col{\operatorname{col}}
\def\gr{\operatorname{gr}}
\def\ev{\operatorname{ev}}
\def\pr{\operatorname{pr}}
\def\trace{\operatorname{tr}}
\newdimen\hoogte    \hoogte=14pt    
\newdimen\breedte   \breedte=15pt   
\newdimen\dikte     \dikte=0.5pt    
\newenvironment{young}{\begingroup
       \def\vr{\vrule height0.8\hoogte width\dikte depth 0.2\hoogte}
       \def\fbox##1{\vbox{\offinterlineskip
                    \hrule height\dikte
                    \hbox to \breedte{\vr\hfill##1\hfill\vr}
                    \hrule height\dikte}}
       \vbox\bgroup \offinterlineskip \tabskip=-\dikte \lineskip=-\dikte
            \halign\bgroup &\fbox{##\unskip}\unskip  \crcr }
       {\egroup\egroup\endgroup}
\def\diagram#1{\relax\ifmmode\vcenter{\,\begin{young}#1\end{young}\,}\else%
              $\vcenter{\,\begin{young}#1\end{young}\,}$\fi}
\def\mf {\mathfrak}
\begin{document}

\title[Twisted Yangians and finite $W$-algebras]{\boldmath Twisted Yangians and finite $W$-algebras}
\author{Jonathan Brown}
\address{Department of Mathematics, University of Oregon, Eugene, OR 97403.}
\email{jbrown8@uoregon.edu}

\begin{abstract}
  We construct an explicit set of generators for the finite $W$-algebras 
associated to nilpotent matrices in the
symplectic or orthogonal Lie algebras 
whose Jordan blocks are all of 
the same size.  We use these generators to show that such finite $W$-algebras
are quotients of twisted Yangians.
\end{abstract}

\maketitle

\section{Introduction and notation}
There has been renewed interest recently in the
study of finite $W$-algebras associated to nilpotent orbits
in semisimple Lie algebras; see e.g. \cite{P, Pjoseph, GG, DK, BGK, Lo}.
The goal of this paper is to show that the 
finite $W$-algebras associated to nilpotent
matrices in the symplectic or orthogonal Lie algebras
whose Jordan blocks are all of the same size
are 
homomorphic images of Olshanski's twisted Yangians from \cite{O, MNO}.
Results along these lines were first obtained by Ragoucy \cite{R} 
by a different approach, although Ragoucy was primarily concerned with the classical case, i.e. the commutative Poisson algebras that arise from the algebras
considered here on passing to their associated graded algebras. 
One new discovery in the present paper is the 
following crossover phenomenon: when the Jordan blocks are of even
size,
the finite $W$-algebra
arising from an orthogonal Lie algebra is 
a quotient of the twisted Yangian
associated to a symplectic Lie algebra and vice versa.
In \cite{BKshifted}, Brundan and Kleshchev proved an analogous result 
relating the finite $W$-algebras associated to arbitrary nilpotent elements in 
type A to quotients of so-called 
{\em shifted Yangians}. This paper is an attempt to adapt some of their 
methods to types B, C and D, specifically, the techniques from \cite[$\S$12]{BKshifted} 
dealing with nilpotent matrices whose Jordan blocks have the same size.

We begin by fixing explicit matrix realizations for the
classical Lie algebras. 
For any integer $n \geq 1$, we will label
 the rows and columns of 
$n \times n$ matrices by the ordered index set
\[
\mathcal{I}_n = \{-n+1, -n+3, \dots, n-1\}.
\]
Let $\mathfrak{g}_n = \mathfrak{gl}_n(\C)$ 
with standard basis given by the matrix units $\{e_{i,j} \:|\: i,j \in \mathcal{I}_n\}$.
Let $J_n^+$ 
be the $n \times n$ matrix with $(i,j)$ entry equal to $\delta_{i,-j}$, and
set
$$
\mf{g}_n^{+} = 
\mf{so}_n(\C)=
\{x\in \mf{g}_n\:|\:
x^T J_n^{+} + J_n^{+} x = 0\},
$$
where $x^T$ denotes the usual transpose of an $n\times n$ matrix.
Assuming in addition that $n$ is even, let
$J_n^-$ be the $n \times n$ matrix with $(i,j)$ entry equal to
$\delta_{i,-j}$ if $j > 0$ and $-\delta_{i,-j}$ if $j < 0$, and set
$$
\mf{g}_n^{-} = 
\mf{sp}_n(\C)
=\{x \in \mf{g}_n\:|\:
x^T J_n^{-} + J_n^{-} x = 0\}.
$$
We adopt the following conventions regarding signs.
For $i \in \mathcal I_n$, define
$\hat \imath \in \Z /2$ by
\begin{equation}
\hat \imath = 
\begin{cases}
0&\text{if $i \geq 0$;}\\
1&\text{if $i < 0$.}
\end{cases}
\end{equation}
We will often identify a sign $\eps =\pm$ 
with the integer
$\pm 1$ when writing formulae.
For example, $\eps^{\hat \imath}$ denotes $1$ if $\eps = +$ or $\hat \imath = 0$,
and it denotes $-1$ if $\eps = -$ and $\hat \imath = 1$.
With this notation, $\mf{g}_n^\eps$ is spanned by the matrices
$\{e_{i,j} - \eps^{\hat \imath + \hat \jmath} e_{-j,-i}\:|\:i,j \in \mathcal{I}_n\}$.

\vspace{3mm}

{\em For the remainder of the article}, we fix integers
$n, l \geq 1$ and signs $\epsilon, \phi \in \{\pm\}$, assuming
that 
$\phi = \epsilon$ if $l$ is odd,
$\phi =- \epsilon$ if $l$ is even, and
$\phi = +$ if $n$ is odd.
We will show that the finite $W$-algebra
$W^\epsilon_{n,l}$ constructed from a nilpotent matrix
of Jordan type $(l^n)$ in the Lie algebra $\mf{g}_{nl}^\epsilon$
is the level $l$ quotient of the twisted Yangian $Y_n^\phi$ 
associated to the Lie algebra $\mf{g}_n^\phi$.

\vspace{2mm}

First consider
the finite $W$-algebra side.
Let $\mathfrak{g} = \mathfrak{g}_{nl}^\epsilon$ 
and $f_{a,b} = e_{a,b} - \eps^{\hat a + \hat b} e_{-b,-a}$,
so $\mathfrak{g}$ is 
spanned by the matrices
$\{f_{a,b}\:|\:a,b \in \mathcal I_{nl}\}$.
Up to isomorphism, the finite $W$-algebra to be defined shortly
only depends on $\mathfrak{g}$ and the Jordan type $(l^n)$.
However we need to fix an explicit choice of coordinates so that we can
be absolutely explicit about the isomorphism in the main theorem below.
We do this by
introducing an $n \times l$ rectangular array of boxes, labeling
rows in order from top to bottom by the index set $\mathcal I_n$ and 
columns in order from left to right by the index set $\mathcal I_l$.
Also label the individual boxes in the array with the elements of the set
$\mathcal I_{nl}$. For $a \in \mathcal I_{nl}$ we let
$\row(a)$ and $\col(a)$ denote the row and column numbers of the box in which 
$a$ appears. We require that the boxes are labeled skew-symmetrically
in the sense that $\row(-a) = -\row(a)$ and $\col(-a) = -\col(a)$.
If $\epsilon = -$
we require in addition that $a > 0$ either if
$\col(a) > 0$ or if $\col(a) = 0$ and $\row(a) > 0$;
this additional restriction streamlines
some of the signs appearing in formulae below, notably (\ref{ev0}).
For example, if $n= 3, l = 2$ and $\epsilon = -, \phi = +$, 
one could pick the labeling
\[
  \diagram{
     {-5\phantom{:}} & {1} \cr 
 {-3\phantom{:}}     & {3} \cr
 {-1\phantom{:}}     & {5} \cr
   }
\]
and get that $\row(1) = -2$ and $\col(1) = 1$.
We remark that the above arrays are a special case of the {\em pyramids}
introduced by Elashvili and Kac in \cite{EK}; see also \cite{BG}.

Having made these choices, we let $e \in \mathfrak{g}$ denote the
following nilpotent matrix of Jordan type $(l^n)$:
\begin{equation*}
  e = 
     \sum_{
        \substack{a,b \in \mathcal{I}_{nl} \\ \row(a) = \row(b) \\ 
           \col(a) +2= \col(b) \geq 2} }
       f_{a,b}
+
\sum_{
     \substack{a,b \in \mathcal{I}_{nl} \\ 
       \row(a) = \row(b) > 0\\ \col(a)+2=\col(b)=1}}
       f_{a,b}
+
\sum_{
     \substack{a,b \in \mathcal{I}_{nl} \\ 
            \row(a) = \row(b) = 0 \\ 
            \col(a)+2=\col(b)=1 } }
       \textstyle{\frac{1}{2}}f_{a,b}.
\end{equation*}
In the above example,
$e=f_{-1,5} + \frac{1}{2} f_{-3,3} = e_{-1,5} + e_{-5,1} + e_{-3,3}$.
Also define an even grading 
\begin{equation} \label{grading}
\mf{g} = \bigoplus_{r \in \Z} \mf{g} (r)
\end{equation}
with $e \in \mf{g}(2)$
by declaring that $\deg(f_{a,b}) = \col(b) - \col(a)$.
Note this grading coincides with the grading obtained
by embedding $e$ into an $\mf{sl}_2$-triple $(e,h,f)$ and considering the
$\operatorname{ad} h$-eigenspace decomposition of $\mf{g}$.  
Let $\mf{p} = \bigoplus_{r \ge 0} \mf{g}(r)$
and $\mf{m} = \bigoplus_{r < 0} \mf{g}(r)$.
Define $\chi : \mf{m} \to \C$ by $x \mapsto \frac{1}{2} \trace(e x)$.
An explicit calculation using the formula for the nilpotent matrix $e$ 
recorded above
shows that
\begin{equation} \label{prchi}
\chi(f_{a,b}) = -\eps^{\hat a + \hat b} \chi(f_{-b,-a}) = 1
\end{equation}
if $\row(a) = \row(b),
\col(a) = \col(b)+2$ and either $\col(a) \geq 2$ or
$\col(a) = 1$, $\row(a) \geq 0$; all other $f_{a,b} \in \mf{m}$ satisfy 
$\chi(f_{a,b}) = 0$.
Let $I$ be the left ideal of the universal enveloping algebra
$U(\mf{g})$ generated by the elements $\{x - \chi(x)\:|\:x \in \mf{m}\}$.
By the PBW theorem, we have that 
$$
U(\mf{g}) = U(\mf{p}) \oplus I.
$$
Define $\pr : 
U(\mf{g}) \to U(\mf{p})$ to be the 
projection along this direct sum decomposition.
Finally the finite $W$-algebra associated to $e$ is the subalgebra
\[
  W_{n,l}^\eps 
= \{u \in U(\mf{p}) | \pr([x,u]) = 0\text{ for all }x \in \mathfrak{m} \}.
\] 
We refer the reader to the introduction of \cite{BKshifted},
where the relationship between this definition (which is 
essentially the setup of \cite{Ly}) and the more general
setup of \cite{P, GG} is explained in detail.

To make the connection between $W_{n,l}^\eps$ and the twisted Yangians, we 
exploit a shifted version of the Miura
transform, which we define as follows.
Let $\mf{h} = \mf{g}(0)$ be the Levi factor 
of $\mf{p}$ coming from
the grading. 
It is helpful
to bear in mind that
there is an isomorphism
\begin{equation}\label{h}
\qquad\:\:\mathfrak{h} \cong \begin{cases}
\mathfrak{g}_n^{\oplus m}&\text{if $l = 2m$;}\\
\mathfrak{g}_n^\eps \oplus \mathfrak{g}_n^{\oplus m}&\text{if $l = 2m+1$.}
\end{cases}
\end{equation}
Although we never need this explicitly, we note for completeness that
this isomorphism maps $f_{a,b} \in \mathfrak{h}$
to $f_{\row(a), \row(b)}\in \mathfrak{g}_n^\eps$ if $\col(a)=\col(b)=0$ or to
$e_{\row(a),\row(b)}$ in the $\lceil \frac{\col(a)}{2}\rceil$th copy of
$\mathfrak{g}_n$ if $\col(a)=\col(b)>0$.
For $q \in \mathcal{I}_l$, let
\begin{equation} \label{rho}
  \rho_q = 
   \begin{cases}  
      (n q  - \epsilon)/2 & \text{if $ q > 0$;} \\
      (n q + \epsilon)/2 & \text{if $ q < 0$;} \\
      0 & \text{if $q=0$}.
   \end{cases}
\end{equation}
Let $\eta$ be the automorphism of $U(\mf{h})$ defined on generators by
$\eta(f_{a,b}) = f_{a,b} - \delta_{a,b} \rho_{\col(a)}$.
Let $\xi : U(\mf{p}) \twoheadrightarrow U(\mf{h})$ be the algebra homomorphism induced by the
natural projection $\mf{p} \twoheadrightarrow \mf{h}$.  
The {\em Miura transform} $\mu: U(\mf{p}) \rightarrow U(\mf{h})$
is the composite map
\begin{equation} \label{miura}
  \mu = \eta \circ \xi. 
\end{equation}
By \cite[$\S$2.3]{Ly} (or Theorem \ref{miurainj} below) the restriction of $\mu$ 
to $W_{n,l}^\eps$ is injective.

\vspace{2mm}

Now we turn our attention to the twisted Yangian $Y_n^\phi$,
recalling that $\phi = -\eps$ if $l$ is even and $\phi = \eps$ if $l$ is odd.
By definition, $Y_n^\phi$ is a subalgebra of the Yangian $Y_n$.
The latter is a certain Hopf algebra over $\C$ with countably many generators 
$\{T_{i,j}^{(r)}\ | i,j \in \mathcal{I}_n, r \in \Z_{>0}\}$;
see e.g. \cite[$\S$1]{MNO} for the precise relations.  
Letting
\[
  T_{i,j}(u) = \sum_{r \ge 0} T_{i,j}^{(r)}u^{-r} \in Y_n[[u^{-1}]]
\]
where $T_{i,j}^{(0)} = \delta_{i,j}$, the comultiplication 
$\Delta:Y_n \rightarrow Y_n \otimes Y_n$ is defined by the
formula
\begin{equation}\label{com0}
  \Delta(T_{i,j}(u)) = \sum_{k \in \mathcal{I}_n}
     T_{i,k}(u) \otimes T_{k,j}(u).
\end{equation}
This and subsequent formulae involving generating functions
should be interpreted by
equating coefficients of the indeterminate $u$ on both sides of equations,
as discussed in detail in \cite[$\S$1]{MNO}.
By  \cite[$\S$3.4]{MNO}, there exists an 
automorphism $\tau:Y_n \rightarrow Y_n$
of order $2$
defined by 
$$
\tau(T_{i,j}(u)) = \phi^{\hat \imath + \hat \jmath} T_{-j,-i}(-u).
$$
We define the twisted Yangian $Y_n^\phi$ to be the subalgebra of $Y_n$ 
generated by the elements $\{S_{i,j}^{(r)} \:|\: i,j \in \mathcal{I}_n, r \in \Z_{>0}\}$
coming from the expansion
\begin{equation}\label{siju}
  S_{i,j} (u) = \sum_{r \geq 0} S_{i,j}^{(r)} u^{-r}
= 
\sum_{k \in \mathcal{I}_n} 
    \tau(T_{i,k}(u)) T_{k,j}(u) \in Y_n[[u^{-1}]].
\end{equation}
This is not the same embedding of $Y_n^\phi$ into $Y_n$ as used in
\cite[$\S$3]{MNO}: we have twisted the embedding there 
by the automorphism $\tau$.
Because of this and the fact that $\tau$ is a coalgebra antiautomorphism
of $Y_n$, we get from \cite[$\S$4.17]{MNO} that the restriction
of $\Delta$ to $Y_n^\phi$ has image contained in $Y_n^\phi \otimes Y_n$
and
\begin{equation}\label{com}
\Delta(S_{i,j}(u)) = \sum_{h,k \in \mathcal{I}_n} S_{h,k}(u) 
\otimes \tau(T_{i,h}(u)) T_{k,j}(u).
\end{equation}
We let 
$\Delta^{(m)}:Y_n \rightarrow Y_n^{\otimes (m+1)}$ 
denote the $m$th iterated comultiplication.
The preceding formula shows that it maps
$Y_n^\phi$ into $Y_n^{\phi} \otimes Y_n^{\otimes m}$.

By \cite[$\S$1.16]{MNO} there is an {evaluation homomorphism}
$Y_n \rightarrow U(\mathfrak{g}_n)$.
In view of this and (\ref{h}), we 
obtain for every $0 < p \in \mathcal{I}_l$ 
a homomorphism
\begin{equation}\label{evk}
\ev_p:Y_n \rightarrow U(\mathfrak{h}),
\qquad
T_{i,j}(u) \mapsto \delta_{i,j} +u^{-1} f_{a,b},
\end{equation}
where $a,b \in \mathcal{I}_{nl}$ are defined
from $\row(a) = i, \row(b) = j$ and $\col(a)=\col(b) = p$.
The image of this map 
is contained in the subalgebra of $U(\mathfrak{h})$
generated by the $\lceil p/2 \rceil$th copy of $\mathfrak{g}_n$ from the
decomposition (\ref{h}).
There is also an evaluation homomorphism
$Y_n^\phi \rightarrow U(\mathfrak{g}_n^\phi)$
defined in \cite[$\S$3.11]{MNO}.
If we assume that $l$ is odd (so $\eps = \phi$),
we can therefore define another homomorphism
\begin{equation}\label{ev0}
\ev_0:Y_n^\phi \rightarrow U(\mathfrak{h}),
\qquad
S_{i,j}(u) \mapsto \delta_{i,j} + (u+{\textstyle\frac{\phi}{2}})^{-1} f_{a,b},
\end{equation}
where 
$\row(a) = i, \row(b) = j$ and $\col(a) = \col(b) = 0$;
if $\eps = -$ this depends on our convention for labeling boxes as specified 
above.
The image of this map is contained in the 
subalgebra of $U(\mathfrak{h})$
generated by the subalgebra
$\mathfrak{g}_n^\eps$ in the decomposition (\ref{h}).
Putting all these things together, we deduce that there is 
a homomorphism
$$
\kappa_l:Y_n^\phi \rightarrow U(\mathfrak{h})
$$
defined by
\begin{equation} \label{kappa}
\kappa_l = 
\begin{cases}
\ev_1 \bar\otimes \ev_3 \bar\otimes\cdots\bar\otimes \ev_{l-1}
\circ \Delta^{(m)}&\text{if $l=2m+2$;}\\
\ev_0 \bar\otimes \ev_2 \bar\otimes\cdots\bar\otimes \ev_{l-1}
\circ \Delta^{(m)}&\text{if $l=2m+1$,}
\end{cases}
\end{equation}
where $\bar\otimes$ indicates composition with the natural multiplication 
in $U(\mathfrak{h})$.
We define the {\em twisted Yangian of level $l$} to be the
image of this map.
Now we are ready to state the main theorem of the article.

\begin{Theorem}\label{main1}
$\mu(W^\eps_{n,l})
=
\kappa_l(Y_n^{\phi}).$
\end{Theorem}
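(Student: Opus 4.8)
\medskip
\noindent\emph{Proof idea.}
The plan is to reduce Theorem~\ref{main1} to three assertions about a family of explicit elements $W_{i,j}^{(r)}\in U(\mf{p})$ ($i,j\in\mathcal I_n$, $r>0$): that each $W_{i,j}^{(r)}$ lies in $W_{n,l}^\eps$; that $\mu(W_{i,j}^{(r)})=\kappa_l(S_{i,j}^{(r)})$; and that the $W_{i,j}^{(r)}$ generate $W_{n,l}^\eps$ as an algebra. Granting these, $\mu(W_{n,l}^\eps)$ and $\kappa_l(Y_n^\phi)$ are each the unital subalgebra of $U(\mf{h})$ generated by the elements $\kappa_l(S_{i,j}^{(r)})$, and hence are equal. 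The construction of the $W_{i,j}^{(r)}$ and the three assertions adapt \cite[$\S$12]{BKshifted}, the new ingredient being the bookkeeping for the signs $\eps^{\hat a+\hat b}$ imposed by the forms $J_{nl}^\eps$.

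Concretely I would take the $W_{i,j}^{(r)}$ to be the natural analogues of the generators of the type A finite $W$-algebra: sums of monomials $f_{a_0,a_1}f_{a_1,a_2}\cdots f_{a_{s-1},a_s}$ over chains of boxes with $\row(a_0)=i$, $\row(a_s)=j$ and column indices increasing by the prescribed amounts, together with correction terms of lower degree for the grading~(\ref{grading}). Membership in $W_{n,l}^\eps$ means $\pr([x,W_{i,j}^{(r)}])=0$ for all $x\in\mf{m}$, which is a direct commutator computation using the explicit values of $\chi$ recorded in~(\ref{prchi}); here the symmetry $f_{a,b}=-\eps^{\hat a+\hat b}f_{-b,-a}$ supplies exactly the extra relations that let the correction terms close up. To identify $\mu(W_{i,j}^{(r)})$ one notes from~(\ref{miura}) that $\xi$ annihilates every $f_{a,b}$ with $\col(a)\neq\col(b)$ while $\eta$ merely shifts diagonal entries by the constants $\rho_{\col(a)}$ of~(\ref{rho}), so $\mu(W_{i,j}^{(r)})$ collapses to a combination of products of generators $f_{a,b}$ of $\mf{h}$ with $\col(a)=\col(b)$; comparing with~(\ref{h}), (\ref{evk}), (\ref{ev0}), (\ref{com}) and the generating function~(\ref{siju}) shows this equals $\kappa_l(S_{i,j}^{(r)})$ term by term. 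This computation is where the crossover surfaces: for $l$ even there is no column with $\col(a)=0$, hence no $\ev_0$ factor in~(\ref{kappa}), and matching the residual column-reversal symmetry of $\mu(W_{i,j}^{(r)})$ against the $\tau$-twisted coproduct~(\ref{com}) forces $\phi=-\eps$; for $l$ odd the column with $\col(a)=0$ carries a genuine $\mf{g}_n^\eps$-factor, $\ev_0$ with its shift by $\tfrac{\phi}{2}$ is needed, and $\phi=\eps$.

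For the last assertion, that the $W_{i,j}^{(r)}$ generate $W_{n,l}^\eps$, I would run a PBW argument for the Kazhdan filtration associated to~(\ref{grading}): read off the leading symbol of each $W_{i,j}^{(r)}$ from its definition, check that the symbols obtained for a suitable finite range of $r$ form a polynomial generating set for $\gr W_{n,l}^\eps$ --- which is a free polynomial algebra with known generator degrees by the PBW theorem for finite $W$-algebras \cite{P,GG} --- and conclude by the usual lifting argument. (Since $\mu$ is injective on $W_{n,l}^\eps$ by~(\ref{miurainj}) and its associated graded is again injective, one may equally well perform this count inside $U(\mf{h})$.)

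The step I expect to be the main obstacle is the verification that $\pr([x,W_{i,j}^{(r)}])=0$, carried out in tandem with the determination of the correction terms and while keeping the signs $\eps^{\hat a+\hat b}$, the shifts $\rho_q$ of~(\ref{rho}) and the $\tfrac{\phi}{2}$ of~(\ref{ev0}) mutually consistent: in type A this commutator computation is already the technical heart of \cite[$\S$12]{BKshifted}, and here the symmetry constraint on $\mf{g}_{nl}^\eps$ both reduces the number of independent generators and introduces sign-dependent cross terms, with the column $\col(a)=0$ behaving differently from the columns $\col(a)>0$ --- which is the structural reason the definition~(\ref{kappa}) of $\kappa_l$ splits into an even and an odd case. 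A preliminary point to settle is the well-definedness of $\kappa_l$ itself: the existence of the evaluation homomorphisms~(\ref{evk}) and~(\ref{ev0}) and the fact that $\Delta^{(m)}$ carries $Y_n^\phi$ into $Y_n^\phi\otimes Y_n^{\otimes m}$, both obtained by assembling the cited results of \cite{MNO} with~(\ref{com}).
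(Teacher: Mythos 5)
Your proposal follows essentially the same route as the paper: your $W_{i,j}^{(r)}$ are the paper's $s_{i,j}(\omega_r)$ (the row-determinant expansion of $\Omega(u)$ produces exactly the chain-sums with lower-order correction terms you describe), and your three assertions are precisely Theorems~\ref{containedinW}, \ref{images} and \ref{gen_cor}, assembled by the same generated-subalgebra argument. The only cosmetic difference is that the paper runs the generation step through the good filtration and the identification $\gr' W_{n,l}^\eps = U(\mf{g}_e)$, checking that the top symbols $f_{i,j;r}$ form a basis of $\mf{g}_e$, rather than through the Kazhdan filtration.
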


We will show moreover that the kernel of $\kappa_l$ 
is generated by the elements
\begin{equation} \label{ker2}
\begin{array}{rl}
  \left\{S_{i,j}^{(r)} \:\Big|\: i,j \in \mathcal{I}_n, r > l\right\}_{\phantom{S}}
    & \text{if $l$ is even;} \\  \\
\left\{ S_{i,j}^{(r)} +\frac{\phi}{2}S_{i,j}^{(r-1)}\: \Big| \: i,j \in \mathcal{I}_n, 
       r > l\right\}_{\phantom{S}} & \text{if $l$ is odd.}
\end{array}
\end{equation}
Since $W^\eps_{n,l} \cong \mu(W^\eps_{n,l})$ by injectivity of the
Miura transform,
and a full set of relations between the generators $S_{i,j}^{(r)}$
of $Y_n^\phi$ are known by \cite[$\S$3.8]{MNO}, 
this means that we have found a 
full set of generators and relations for the finite $W$-algebra
$W_{n,l}^\eps$.

\vspace{2mm}

The key step in our proof of Theorem~\ref{main1}
is a remarkable explicit formula for the generators of 
$W_{n,l}^\eps$ corresponding to the elements $S_{i,j}^{(r)}\in Y_n^\phi$.
In the remainder of the introduction, we want to 
explain this formula.
Given $i,j \in \mathcal{I}_n$ 
and $p,q \in \mathcal{I}_l$,
let $a,b$ be the elements of $\mathcal{I}_{n l}$ such that
$\col(a) = p$,
$\col(b) = q$, 
$\row(a) = i$, and
$\row(b) = j$.
Define a linear map
$s_{i,j} : \mf{g}_l \rightarrow \mf{g}$ by setting
\begin{equation} \label{sij}
    s_{i,j}(e_{p,q}) = \phi^{\hat \imath \hat p + \hat \jmath \hat q} 
f_{a,b}.
\end{equation}
Let $M_n$ denote the algebra of $n \times n$ matrices
over $\C$, with rows
and columns labeled by the index set $\mathcal{I}_n$ as usual,
and let $T(\mf{g}_l)$ be the tensor algebra on the vector space
$\mf{g}_l$.
Let 
\begin{equation} \label{S}
  s : T(\mf{g}_l) \to M_n \otimes U(\mf{g})
\end{equation}
be the algebra homomorphism that maps a generator
$x \in \mf{g}_l$ to
$\sum_{i,j \in \mathcal{I}_n} e_{i,j} \otimes s_{i,j}(x)$.
This in turn defines linear maps
\begin{equation*}
  s_{i,j} :  T(\mf{g}_l) \to U(\mf{g}),
\end{equation*}
such that
\[
s(x) = \sum_{i,j \in \mathcal{I}_n} e_{i,j} \otimes s_{i,j}(x)
\]
for every $x \in T(\mf{g}_l)$.
Note for any $x, y \in T(\mf{g}_l)$ that
\begin{equation} \label{sijmult}
  s_{i,j}(x y) = \sum_{k \in \mathcal{I}_n} s_{i,k}(x) s_{k,j}(y)
\end{equation}
and also
$s_{i,j}(1) = \delta_{i,j}$.

If $A$ is an $l \times l$ 
matrix with entries in some ring,
we define its {\em row determinant} $\rdet A$
to be the usual Laplace expansion of determinant, but keeping
the (not necessarily commuting) monomials that arise in {\em row order}; 
see e.g. \cite[(12.5)]{BKshifted}.
For $q \in \mathcal{I}_l$ and an indeterminate $u$,
let 
$$
u_q = u+ e_{q,q} +\rho_q \in T(\mf{g}_l)[u],
$$
recalling the definition of $\rho_q$ from \eqref{rho}.
Define $\Omega(u)$ to be the $l \times l$ matrix with
entries in $T(\mf{gl}_l)[u]$ whose $(p,q)$ entry for $p,q \in \mathcal I_l$
is equal to
\begin{equation}\label{omega_even}
\Omega(u)_{p,q} =
\left\{
\begin{array}{ll}
e_{p,q}&\text{if $p < q$;}\\
u_q&\text{if $p=q$;}\\
-1&\text{if $p = q+2 < 0$;}\\
-\phi&\text{if $p  = q+2 = 0$;}\\
1&\text{if $p = q+2 > 0$;}\\
0&\text{if $p  > q+2$.}
\end{array}\right.
\end{equation}
For example, if $l = 4$ then
$$
\Omega(u) = \left(
\begin{array}{cccc}
u_{-3}&e_{-3,-1}&e_{-3,1}&e_{-3,3}\\
-1&u_{-1}&e_{-1,1}&e_{-1,3}\\
0&1&u_1&e_{1,3}\\
0&0&1&u_3
\end{array}
\right).
$$
If $l$ is odd we also need the $l \times l$ matrix
$\bar\Omega(u)$ defined by
\begin{equation} \label{bar_omega}
  \bar\Omega(u)_{p,q} = 
   \begin{cases}
     \Omega(u)_{p,q} & \text{if $p \ne 0$ or $q \ne 0$;} \\
     e_{0,0}   & \text{if $p = q = 0$.}
    \end{cases}
\end{equation}
For example, if $l=5$ then
\begin{align*}
\Omega(u)&=
\left(
\begin{array}{ccccc}
u_{-4}&e_{-4,-2}&e_{-4,0}&e_{-4,2}&e_{-4,4}\\
-1&u_{-2}&e_{-2,0}&e_{-2,2}&e_{-2,4}\\
0&-\phi&u_0&e_{0,2}&e_{0,4}\\
0&0&1&u_2&e_{2,4}\\
0&0&0&1&u_4
\end{array}
\right),\\
\bar \Omega(u)&=
\left(
\begin{array}{ccccc}
u_{-4}&e_{-4,-2}&e_{-4,0}&e_{-4,2}&e_{-4,4}\\
-1&u_{-2}&e_{-2,0}&e_{-2,2}&e_{-2,4}\\
0&-\phi&e_{0,0}&e_{0,2}&e_{0,4}\\
0&0&1&u_2&e_{2,4}\\
0&0&0&1&u_4
\end{array}
\right).
\end{align*}
Then we let
\begin{equation}\label{omegadef}
  \omega(u) = 
\sum_{r = -\infty}^{l} \omega_{l-r} u^{r}  = 
\left\{
\begin{array}{ll}
\rdet \Omega(u)&\text{if $l$ is even;}\\
\displaystyle\rdet \Omega(u) + \sum_{r=1}^{\infty} (-2\phi u)^{-r} 
\rdet \bar\Omega(u)  &\text{if $l$ is odd.}
\end{array}\right.
\end{equation}
This defines elements $\omega_r \in T(\mathfrak{g}_l)$,
hence elements $s_{i,j}(\omega_r) \in U(\mathfrak{g})$
for $i,j \in \mathcal{I}_n$ and $r \geq 1$.
It is obvious from the definition that
each $s_{i,j}(\omega_r)$ actually belongs to $U(\mathfrak{p})$.

\begin{Theorem} \label{gens}
The elements 
$\{s_{i,j}(\omega_r)\:|\:i, j \in \mathcal{I}_n, r \geq 1\}$
generate the subalgebra $W_{n,l}^\eps$. Moreover,
$\mu(s_{i,j}(\omega_r)) = \kappa_l(S_{i,j}^{(r)})$.
\end{Theorem}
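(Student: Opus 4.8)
The plan is to prove the two assertions of Theorem~\ref{gens} in tandem, reducing the first (that the $s_{i,j}(\omega_r)$ generate $W_{n,l}^\eps$) to the second (that $\mu(s_{i,j}(\omega_r)) = \kappa_l(S_{i,j}^{(r)})$) together with the injectivity of the Miura transform and a dimension/filtration count. Concretely: since $\mu$ restricted to $W_{n,l}^\eps$ is injective and the $s_{i,j}(\omega_r)$ visibly lie in $U(\mf p)$, it suffices to know (a) that each $s_{i,j}(\omega_r)$ actually lies in $W_{n,l}^\eps$, and (b) that their images under $\mu$ generate $\kappa_l(Y_n^\phi)=\mu(W_{n,l}^\eps)$ (using Theorem~\ref{main1}). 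Part (b) is immediate from the identity $\mu(s_{i,j}(\omega_r))=\kappa_l(S_{i,j}^{(r)})$ since the $S_{i,j}^{(r)}$ generate $Y_n^\phi$ by definition; so the real content is (a) plus the Miura identity. I expect the argument for (a) to run through the PBW/Kazhdan filtration on $U(\mf p)$: one shows the leading symbols of the $s_{i,j}(\omega_r)$ coincide with a known generating set of the classical $W$-algebra $\gr W_{n,l}^\eps$ (the invariants of the associated graded), which forces them into $W_{n,l}^\eps$ and simultaneously shows they generate once one knows the count matches.

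The heart of the proof is the Miura identity $\mu(s_{i,j}(\omega_r)) = \kappa_l(S_{i,j}^{(r)})$. Here I would unwind both sides into explicit generating-function form and compare. On the right, $\kappa_l$ is built from the iterated comultiplication $\Delta^{(m)}$ on $S_{i,j}(u)$ followed by the evaluation maps $\ev_p$ (and $\ev_0$ when $l$ is odd); using \eqref{com} repeatedly expresses $\kappa_l(S_{i,j}(u))$ as a sum over index paths of products of the matrices $\tau(T)(u)$ and $T(u)$ evaluated via \eqref{evk}, \eqref{ev0}. Each evaluated $T_{i,j}(u)\mapsto \delta_{i,j}+u^{-1}f_{a,b}$ contributes, in matrix form, a factor $1 + u^{-1} E^{(p)}$ where $E^{(p)}$ is the $\mf g_n$-matrix sitting in the $p$th column-block, and the $\tau$-twisted factor contributes $1 - u^{-1}(E^{(p)})'$ with the appropriate sign flip coming from $\tau(T_{i,j}(u))=\phi^{\hat\imath+\hat\jmath}T_{-j,-i}(-u)$. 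On the left, $\mu=\eta\circ\xi$ applied to $s_{i,j}(\omega_r)$: since $\xi$ kills everything outside $\mf h$ and $\omega(u)$ is defined as a row-determinant of $\Omega(u)$ (plus the correction series when $l$ is odd), the key is that the row-determinant of the bidiagonal-ish matrix $\Omega(u)$ telescopes — its expansion along rows is exactly a product of the $l$ "transfer matrices" $1\pm u_q^{-1}e_{q,q}$-type factors, matching the column-block factorization on the Yangian side. The shift automorphism $\eta$ (subtracting $\rho_{\col(a)}$) is precisely what converts $u+e_{q,q}$ into $u+e_{q,q}+\rho_q=u_q$, reconciling the $\rho_q$ in $u_q$ with the $\rho_q$ in the definition of $\eta$ and the $\textstyle\frac\phi2$-shift in $\ev_0$ (the parameter $\rho_0=0$ together with the $(u+\frac\phi2)^{-1}$ in \eqref{ev0} handling the middle block when $l$ is odd).

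The technical core — and the step I expect to be the main obstacle — is establishing that the row-determinant $\rdet\Omega(u)$, after applying $s_{i,j}$, equals the ordered product of evaluated Yangian factors, \emph{with all signs matching}. The sign bookkeeping is genuinely delicate: the factor $\phi^{\hat\imath\hat p+\hat\jmath\hat q}$ in the definition \eqref{sij} of $s_{i,j}$, the entries $-1$, $-\phi$, $1$ on the subdiagonal of $\Omega(u)$ (which encode whether a column index is negative, zero, or positive), the sign of $\phi^{\hat\imath+\hat\jmath}$ from $\tau$, and the reversal $u\mapsto -u$ in $\tau$ all have to conspire correctly, and this is exactly where the special labeling convention for boxes (imposed when $\eps=-$) is used. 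I would handle this by induction on $l$: expand $\rdet\Omega(u)$ along the first row (valid for row-determinants), recognize the minors as $\rdet$ of smaller $\Omega$-type matrices shifted appropriately, and match this against peeling off the outermost evaluation factor in $\kappa_l$ via one application of \eqref{com}. The base cases $l=1$ (where $\omega(u)=\ev_0$ of $S_{i,j}(u)$ essentially by definition, modulo the geometric-series correction term, which is precisely the classical expansion $(u+\frac\phi2)^{-1}=\sum(-2\phi u)^{-r}\cdot(\text{stuff})$ up to normalization) and $l=2$ should be checked by hand to pin down the normalization. The odd-$l$ correction series $\sum_{r\ge1}(-2\phi u)^{-r}\rdet\bar\Omega(u)$ is the one piece with no analogue in the type-A story of \cite{BKshifted}; I expect it arises because the level-$l$ quotient of the twisted Yangian, unlike the shifted Yangian, does not truncate the series $S_{i,j}(u)$ to a polynomial but rather imposes the relation in \eqref{ker2}, and matching that requires the extra terms. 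Once the generating-function identity is proven, part (a) follows because $\kappa_l(S_{i,j}^{(r)})\in\kappa_l(Y_n^\phi)=\mu(W_{n,l}^\eps)$ and $\mu$ is injective on $W_{n,l}^\eps$ with the $s_{i,j}(\omega_r)\in U(\mf p)$, so they must be the (unique) preimages; and the generation statement follows since the $S_{i,j}^{(r)}$ generate $Y_n^\phi$ and hence their $\kappa_l$-images generate $\kappa_l(Y_n^\phi)=\mu(W_{n,l}^\eps)\cong W_{n,l}^\eps$.
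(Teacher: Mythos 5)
Your outline of the Miura identity $\mu(s_{i,j}(\omega_r))=\kappa_l(S_{i,j}^{(r)})$ is essentially the paper's Theorem \ref{images}, and in fact the argument is even shorter than you anticipate: no induction on $l$ or base-case checking is needed, because $\mu$ kills every entry $s(e_{p,q})$ with $p<q$ (these lie in $\mf{g}(r)$ for $r>0$) and sends $s(u_p)$ to $s(u+e_{p,p})$, so in the Laplace expansion of $\rdet\Omega(u)$ only the identity permutation survives and the row determinant collapses directly to the ordered product $s((u+e_{1-l,1-l})\cdots(u+e_{l-1,l-1}))$ matching \eqref{pf1}--\eqref{pf2}. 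However, there are two genuine gaps in the rest of your plan. First, your proposed argument for point (a) — that matching the leading symbols of the $s_{i,j}(\omega_r)$ with generators of the associated graded algebra ``forces them into $W_{n,l}^\eps$'' — does not work: agreement of top-degree terms says nothing about the lower-order terms, and an element of $U(\mf{p})$ whose symbol lies in $\gr' W_{n,l}^\eps=U(\mf{g}_e)$ need not itself satisfy $\pr([x,\cdot])=0$ for all $x\in\mf{m}$. Membership in $W_{n,l}^\eps$ is exactly the hard content of the theorem, and the paper establishes it (Theorem \ref{containedinW}) by a long direct computation: one checks $\pr([s_{i,j}(e_{q+2,q}),s_{h,k}(\rdet\Omega(u))])=0$ (and likewise for $\bar\Omega(u)$) for the generators \eqref{thegens} of $\mf{m}$, via the commutator formula of Lemma \ref{pos_commuter} and row/column operations on the row determinants. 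Your proposal contains no substitute for this computation.

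Second, your deduction of the generation statement is circular as written: you invoke Theorem \ref{main1} ($\mu(W^\eps_{n,l})=\kappa_l(Y_n^\phi)$) to conclude that the images $\kappa_l(S_{i,j}^{(r)})$ generate $\mu(W_{n,l}^\eps)$, but in the paper's logical order Theorem \ref{main1} is itself deduced from Theorem \ref{gens} (together with Theorems \ref{containedinW} and \ref{images}), not the other way around; the inclusion $\mu(W^\eps_{n,l})\subseteq\kappa_l(Y_n^\phi)$ has no independent proof. The non-circular route — which is close to the ``count matches'' remark you make in passing — goes through the good filtration: $\gr' W_{n,l}^\eps=U(\mf{g}_e)$ by \eqref{assgr}, the top symbols $f_{i,j;r}=\gr'_r s_{i,j}(\omega_{r+1})$ are elements of $\mf{g}_e$, and Lemma \ref{centralizer} shows that those indexed by admissible triples form a basis of $\mf{g}_e$ (by comparing with $\dim\mf{g}_e$ from the Jordan type $(l^n)$); generation then follows by induction on the filtration. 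You should separate this cleanly from the membership question and from Theorem \ref{main1}.
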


The hardest part of the proof is to show that
each $s_{i,j}(\omega_r)$ belongs to $W_{n,l}^\eps$.
This is established by a lengthy calculation which we postpone until $\S$4.
In $\S$2 we study
the twisted Yangian of level $l$, in particular proving a PBW
theorem for this algebra and computing the kernel of $\kappa_l$
as mentioned above.
We also check that
$\mu(s_{i,j}(\omega_r))  = \kappa_l(S_{i,j}^{(r)})$. 
Then in $\S$3 we complete the proofs of Theorems~\ref{main1} and \ref{gens}.
At the same time we obtain a direct proof of the injectivity of the
Miura transform in this case.

In subsequent work,
we will combine the results of this article with work of Molev \cite{Molev}
to deduce the classification of finite dimension irreducible representations
of the finite $W$-algebras $W_{n,l}^\eps$;
we expect this will allow us to verify \cite[Conjecture 5.2]{BGK} in 
this case. It seems possible that the connection to finite $W$-algebras
could also be used to derive explicit character formulae for the finite dimensional irreducible representations of twisted Yangians, as was done in type A
in \cite[$\S$8]{BKrep}.

\vspace{2mm}

{\em Acknowledgments.}  The author would like to thank Jonathan Brundan for
suggesting this problem and for his generous advice while 
writing this article, and Alexander Molev for some helpful comments.

\section{The twisted Yangian of level $l$}

Continuing with notation from the introduction,
we begin this section by giving a different
description of the 
map $\kappa_l:Y_n^\phi \rightarrow U(\mathfrak{h})$ from (\ref{kappa}).
Let
\begin{align*}
T(u) &= \sum_{i,j \in \mathcal I_n} e_{i,j} \otimes T_{i,j}(u) \in
M_n \otimes Y_n[[u^{-1}]],\\
S(u) &= \sum_{i,j \in \mathcal I_n} e_{i,j} \otimes S_{i,j}(u) \in
M_n \otimes Y^\phi_n[[u^{-1}]].
\end{align*}
For a linear map $f:V \rightarrow W$,
we use the same notation $f$ for the induced map 
$\operatorname{id} \otimes f:
M_n \otimes V \rightarrow M_n \otimes W$.
Thinking of elements of 
$M_n \otimes V$ 
(resp. $M_n \otimes W$) 
as $n \times n$ matrices with entries in $V$ (resp. $W$),
this is just the linear map obtained by applying $f$ 
simultaneously to all matrix entries.
We extend (\ref{evk}) by defining
a homomorphism $\ev_{-p}:Y_n \rightarrow U(\mathfrak{h})$ 
for $0 < p \in \mathcal{I}_l$ by setting
\begin{equation}\label{Evm}
\ev_{-p} = \ev_p \circ \tau.
\end{equation}
Since the images of $\ev_p$ and $\ev_q$ 
commute for $p \ne \pm q$, it is then the case by (\ref{kappa}), (\ref{com0}), (\ref{siju}) and
(\ref{com}) 
that
\begin{multline} \label{kappa2}
\kappa_l(S(u)) = \\
\begin{cases}
\ev_{1-l}(T(u)) \cdots \ev_{-1}(T(u)) \ev_1(T(u)) \cdots \ev_{l-1}(T(u))
&\text{if $l$ is even;}\\
\ev_{1-l}(T(u)) \cdots \ev_{-2}(T(u)) 
\ev_0(S(u)) \ev_2(T(u)) \cdots \ev_{l-1}(T(u))
&\text{if $l$ is odd,}
\end{cases}
\end{multline}
where the product on the right hand side is 
in the algebra 
$M_n \otimes U(\mathfrak{h})[[u^{-1}]]$.

For any $0 \neq p \in \mathcal{I}_l$,
(\ref{Evm}), (\ref{evk}), and the labeling convention for boxes implies that
\begin{equation*}
\ev_p(T_{i,j}(u))
= \delta_{i,j} +u^{-1} \phi^{\hat p(\hat \imath + \hat \jmath)} 
f_{a,b},
\end{equation*}
where $a,b \in \mathcal{I}_{nl}$ satisfy
$\row(a) = i, \row(b) = j$ and $\col(a) = \col(b) = p$.
Hence in the notation (\ref{sij}) we have that
\begin{equation*}
\ev_p(T_{i,j}(u)) =  \delta_{i,j} + u^{-1} s_{i,j}(e_{p,p}).
\end{equation*}
Also (\ref{ev0}) is equivalent to
\begin{equation*}
\ev_0(S_{i,j}(u)) = \delta_{i,j} + (u + {\textstyle \frac{\phi}{2}})^{-1}
s_{i,j}(e_{0,0})
=\delta_{i,j} +\sum_{r =0}^\infty
(-2\phi )^{-r}u^{-1-r} s_{i,j}(e_{0,0}).
\end{equation*}
Using the more sophisticated notation
(\ref{S}), we deduce that
\begin{align*}
u \ev_p(T(u)) &= s(u+e_{p,p}),\\
u \ev_0 (S(u)) &= s(u+e_{0,0}) +
\sum_{r = 1}^\infty (-2\phi u)^{-r} s(e_{0,0}).
\end{align*}
Hence (\ref{kappa2}) is equivalent to the equation
\begin{equation}\label{pf1}
u^l \kappa_l(S(u))
=
s((u+e_{1-l,1-l}) \cdots (u+e_{-1,-1})
(u+e_{1,1})\cdots (u+e_{l-1,l-1}))
\end{equation}
if $l$ is even and
\begin{multline}\label{pf2}
u^l \kappa_l(S(u))
=
s((u+e_{1-l,1-l}) \cdots (u+e_{-2,-2})
(u+e_{0,0})
(u+e_{2,2})\cdots (u+e_{l-1,l-1}))\\
+
\sum_{r = 1}^\infty
(-2\phi u)^{-r} s((u+e_{1-l,1-l}) \cdots (u+e_{-2,-2})
e_{0,0}
(u+e_{2,2})\cdots (u+e_{l-1,l-1}))
\end{multline}
if $l$ is odd.
Equating $u^{l-r}$-coefficients gives that
\begin{multline}\label{kappa3}
\kappa_l(S_{i,j}^{(r)})
= 
\sum_{\substack{
p_1,\dots,p_r \in \mathcal{I}_l \\
p_1 < \cdots < p_r}}
s_{i,j}(e_{p_1,p_1}\cdots e_{p_r,p_r})
+
\sum_{t=1}^{r-1}
(-2\phi)^{t-r}
\!\!\!\sum_{\substack{
p_1,\dots,p_t \in \mathcal{I}_l \\
p_1 < \cdots < p_t \\
0 \in \{p_1,\dots, p_t\}}}
s_{i,j}(e_{p_1,p_1}\cdots e_{p_t,p_t}),
\end{multline}
the last term in this formula being
zero automatically if $l$ is even.
The following theorem verifies the second statement of Theorem~\ref{gens}.

\begin{Theorem} \label{images}
$u^l \kappa_l(S(u)) = \mu(s(\omega(u)))$.
\end{Theorem}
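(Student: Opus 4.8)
The plan is to prove the identity $u^l\kappa_l(S(u)) = \mu(s(\omega(u)))$ by comparing the two sides entry-by-entry in $M_n\otimes U(\mf{h})[[u^{-1}]]$, using the explicit formulae \eqref{pf1}--\eqref{kappa3} for the left side and the row-determinant definitions \eqref{omega_even}--\eqref{omegadef} of $\omega(u)$ for the right side. The essential observation is that the product of matrices $(u+e_{1-l,1-l})\cdots(u+e_{l-1,l-1})$ appearing inside $s(\cdots)$ in \eqref{pf1} is, up to the shift $\rho_q$, precisely a telescoping product that the row determinant of the bidiagonal-plus-upper-triangular matrix $\Omega(u)$ computes; so the heart of the argument is a purely combinatorial lemma in the tensor algebra $T(\mf{g}_l)[u]$, with no reference to $U(\mf{g})$ or $U(\mf{h})$ at all.

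First I would record how $s$ and $\mu$ interact with the relevant elements of $T(\mf{g}_l)$. Since $s:T(\mf{g}_l)\to M_n\otimes U(\mf{g})$ is an algebra homomorphism, $s(u_q) = s(u+e_{q,q}+\rho_q) = u + s(e_{q,q}) + \rho_q$, and by \eqref{sij} the entry $s_{i,j}(e_{q,q}) = \phi^{\hat\imath\hat q + \hat\jmath\hat q}f_{a,b}$ with $\row(a)=i,\row(b)=j,\col(a)=\col(b)=q$. Applying $\mu = \eta\circ\xi$ then strips the diagonal of its $\rho_q$ via $\eta$, i.e. $\mu(s(u_q)) = u + \mu(s(e_{q,q})) + \rho_q$ where $\mu(s(e_{q,q}))_{i,j}$ equals $f_{a,b} - \delta_{i,j}\rho_q$ in $U(\mf{h})$, so that the $\rho_q$ cancels and we are left exactly with $u$ plus the Levi-factor image. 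The upshot is that $\mu\circ s$ applied to $\Omega(u)$ (resp. $\bar\Omega(u)$) produces essentially the matrix product appearing in \eqref{pf1}--\eqref{pf2}, modulo the bookkeeping of the $\rho_q$'s and the sign factors $\phi^{\hat\imath\hat p+\hat\jmath\hat q}$.

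The main technical step — and the one I expect to be the real obstacle — is the identity, valid in $M_n\otimes U(\mf{h})[[u^{-1}]]$, that
\[
\mu\bigl(s(\rdet\Omega(u))\bigr)
=
\mu\bigl(s((u+e_{1-l,1-l})\cdots(u+e_{l-1,l-1}))\bigr)
\]
when $l$ is even, with the analogous statement for $\bar\Omega(u)$ and the odd case. One verifies this by expanding the row determinant along the first column: the only nonzero entries in column $1-l$ are $u_{1-l}$ in position $(1-l,1-l)$ and $\mp 1$ or $\mp\phi$ in position $(3-l,1-l)$, which gives a two-term recursion expressing $\rdet\Omega(u)$ (for the $l\times l$ matrix) in terms of $\rdet$ of the lower-right $(l-1)\times(l-1)$ and $(l-2)\times(l-2)$ submatrices. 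The same two-term structure is visible in the matrix product $(u+e_{1-l,1-l})(u+e_{3-l,3-l})\cdots$: multiplying out the leftmost factor splits off a ``$u+e_{1-l,1-l}$'' piece and a piece where the $e_{1-l,1-l}$ has moved one step, matching the Laplace recursion after one tracks the $\rho_q$ shifts (the definition $u_q = u+e_{q,q}+\rho_q$ is precisely calibrated so that the $\rho$'s produced by the recursion telescope). For $l$ even this is a clean induction on $l$; for $l$ odd the entry $-\phi$ in the middle column and the correction term $\sum_r(-2\phi u)^{-r}\rdet\bar\Omega(u)$ reproduce exactly the geometric-series expansion of $(u+\tfrac{\phi}{2})^{-1}$ coming from $\ev_0$ in \eqref{ev0}, so one checks the two correction sums agree term by term. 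The sign factors $\phi^{\hat\imath\hat p + \hat\jmath\hat q}$ multiply correctly through the row-ordered expansion because in any surviving monomial $e_{p_1,p_2}e_{p_2,p_3}\cdots$ the intermediate indices cancel in pairs, leaving only $\phi^{\hat\imath\hat{p_1} + \hat\jmath\hat{p_r}}$, which is consistent with \eqref{sijmult}. Assembling these pieces — the preliminary computation of $\mu\circ s$ on $u_q$, the Laplace recursion matching the matrix product, and the geometric-series bookkeeping in the odd case — and comparing $u^{l-r}$-coefficients against \eqref{kappa3} completes the proof.
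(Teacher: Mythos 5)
Your proposal correctly records that $\mu(s(u_q)) = s(u+e_{q,q})$ (the $\rho_q$ built into $u_q$ cancelling the shift coming from $\eta$), and the odd-case bookkeeping matching $\sum_r(-2\phi u)^{-r}\rdet\bar\Omega(u)$ against the geometric expansion of $(u+\frac{\phi}{2})^{-1}$ is on the right track. But you have omitted the one observation that makes the theorem a two-line computation and that your own argument cannot do without: $\mu(s(e_{p,q})) = 0$ whenever $p<q$, because such an $e_{p,q}$ is sent by $s$ to elements $f_{a,b}$ of strictly positive degree in the grading (\ref{grading}), hence into the nilradical of $\mf{p}$, which $\xi$ kills. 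Granting this, a permutation $\sigma$ contributes to $\mu(s(\rdet\Omega(u)))$ only if $\sigma(p)\le p$ for all $p$, which forces $\sigma=\mathrm{id}$; the row determinant collapses at once to the ordered diagonal product $s((u+e_{1-l,1-l})\cdots(u+e_{l-1,l-1}))$, which is $u^l\kappa_l(S(u))$ by (\ref{pf1}) (and similarly for $\bar\Omega(u)$ and (\ref{pf2})). Your framing of the key step as ``a purely combinatorial lemma in $T(\mf{g}_l)[u]$, with no reference to $U(\mf{g})$ or $U(\mf{h})$'' cannot work: no such identity holds in the tensor algebra, since $\rdet\Omega(u)$ contains genuinely off-diagonal monomials (for instance $-e_{1-l,3-l}\,u_{5-l}\cdots u_{l-1}$ from the transposition of the first two columns) that have no counterpart in the matrix product and vanish only after applying $\mu\circ s$.

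Two further concrete problems. First, the claimed two-term Laplace recursion does not close: in the term where row $3-l$ is sent to column $1-l$, the complementary factor pairs row $1-l$ with one of the columns $3-l,\dots,l-1$, so every available entry from that row is a strictly upper-triangular $e_{1-l,q}$ --- this term dies only because of the vanishing you never invoke, and there is no ``matching two-term structure'' in the product $(u+e_{1-l,1-l})\cdots(u+e_{l-1,l-1})$ for it to cancel against. Second, your displayed target identity applies $\mu$ to both sides; since $\mu(s(e_{p,p})) = s(e_{p,p})-\rho_p$, the right-hand side as you wrote it equals $\prod_p\bigl(u-\rho_p+s(e_{p,p})\bigr)$ rather than $u^l\kappa_l(S(u))$. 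The calibration of $u_q=u+e_{q,q}+\rho_q$ is designed precisely so that $\mu$ is applied to the $\rdet$ side only, and the right-hand side of the key identity is $s(\cdots)$, not $\mu(s(\cdots))$.
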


\begin{proof}
The Miura transform (\ref{miura}) satisfies
$\mu(s(u_{p})) = s(u+e_{p,p})$
and
$\mu(s(e_{p,q})) = 0$
if $p < q$.
So recalling 
the matrices $\Omega(u)$ and $\bar\Omega(u)$ from \eqref{omega_even}
and
\eqref{bar_omega}
we get that
\[
  \mu(s(\rdet \Omega(u))) = 
      s( (u+e_{1-l,1-l})\cdots (u+e_{l-1,l-1})),
\]
and
\[
  \mu(s(\rdet \bar\Omega(u))) = 
     s((u+e_{1-l,1-l}) \cdots (u+e_{-2,-2})
      e_{0,0} (u+e_{2,2}) \cdots (u+e_{l-1,l-1})).
\]
The theorem follows on 
comparing  (\ref{omegadef}), (\ref{pf1}) and (\ref{pf2}).
\end{proof}

The goal now is to prove a PBW theorem for the twisted Yangian 
of level $l$, $\kappa_l(Y_n^\phi)$.
We will need the following
elementary lemma, which is established in the proof of 
\cite[Theorem 3.1]{BKparabpres}.

\begin{Lemma} \label{bk3.1}
  Let $X$ be the variety of tuples
$(A_{1-l}, A_{3-l}, \dots, A_{l-1})$ of $n \times n$ matrices.
Let $x_{i,j}^{[r]} \in \C[X]$ be the coordinate function picking
out the $(i,j)$ entry of $A_r$.  Let $Y$ be the variety of tuples
$(B_1, \dots, B_l)$ of $n \times n$ matrices.
Let $y_{i,j}^{[r]} \in \C[Y]$ be the coordinate function picking out
the $(i,j)$ entry of $B_r$.
Define
\[
  \theta : X \to Y, \quad (A_{1-l}, \dots, A_{l-1}) \mapsto 
    (B_1, \dots, B_l)
\]
where
\[
  B_r = \sum_{\substack{p_1, \dots, p_r \in \mathcal{I}_l \\
      p_1 < \dots < p_r }}
     A_{p_1} A_{p_2} \dots A_{p_r},
\]
that is, $B_r$ is the $r$th elementary symmetric function in the matrices
$(A_{1-l}, \dots, A_{l-1})$.
Then the comorphism $\theta^*: \C[Y] \to \C[X]$ satisfies
\[
  \theta^*(y_{i,j}^{[r]}) =
  \sum_{\substack{
  i_1, \dots, i_{r-1} \in \mathcal{I}_n \\
  p_1, \dots, p_r \in \mathcal{I}_l \\
  p_1 < \dots < p_r }}
  x_{i,i_1}^{[p_1]} x_{i_1, i_2}^{[p_2]} \dots x_{i_{r-1},j}^{[p_r]}
\]
Moreover the derivative $d \theta _x : T_x(X) \to T_{\theta(x)}(Y)$
is an isomorphism for any point
$x = (c_{1-l} I_n, \dots, c_{l-1} I_n)$ such that
$c_{1-l}, \dots, c_{l-1}$ are pairwise distinct scalars.
\end{Lemma}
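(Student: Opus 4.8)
The plan is to split the statement into two essentially independent pieces: the explicit formula for the comorphism $\theta^*$, and the claim that $d\theta_x$ is an isomorphism at the special points $x = (c_{1-l} I_n, \dots, c_{l-1} I_n)$ with distinct scalars. The first piece is a direct unwinding of definitions. By construction $\theta$ sends $(A_{1-l}, \dots, A_{l-1})$ to the tuple $(B_1, \dots, B_l)$ of elementary symmetric functions in these matrices, so the $(i,j)$ entry of $B_r$ is obtained by expanding the matrix product $A_{p_1} A_{p_2} \cdots A_{p_r}$ over all increasing sequences $p_1 < \cdots < p_r$; writing out this matrix product entry by entry introduces the intermediate indices $i_1, \dots, i_{r-1} \in \mathcal{I}_n$ and yields exactly the claimed formula $\theta^*(y_{i,j}^{[r]}) = \sum x_{i,i_1}^{[p_1]} x_{i_1,i_2}^{[p_2]} \cdots x_{i_{r-1},j}^{[p_r]}$. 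There is nothing to do here beyond bookkeeping.

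For the second piece, the strategy is to compute $d\theta_x$ explicitly as a linear map on tangent spaces. Identify $T_x(X)$ with tuples $(\dot A_{1-l}, \dots, \dot A_{l-1})$ of $n\times n$ matrices and similarly for $T_{\theta(x)}(Y)$. Differentiating the elementary symmetric function $B_r = e_r(A_{1-l}, \dots, A_{l-1})$ at the point where $A_p = c_p I_n$, one gets, by the Leibniz rule, $\dot B_r = \sum_p \bigl(\partial e_r / \partial A_p\bigr)\big|_x \dot A_p = \sum_p e_{r-1}(c_q : q \ne p)\, \dot A_p$, since at a point of scalar matrices all the $A_q$ commute and each summand of $e_r$ contributing to the $A_p$-derivative collapses to a scalar times $\dot A_p$. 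Thus $d\theta_x$ is block-diagonal over the $n^2$ matrix entries: on each copy of $\C^l$ (one coordinate for each $p \in \mathcal{I}_l$) it acts by the $l \times l$ matrix whose $(r,p)$ entry is $e_{r-1}(c_q : q \ne p)$, for $r = 1, \dots, l$.

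The main point is then to show this $l\times l$ matrix is invertible. Its determinant is, up to sign and the ordering conventions, a classical Schur-type/Vandermonde-type determinant: the matrix $\bigl(e_{r-1}(c_q : q\ne p)\bigr)_{r,p}$ is (a specialization of) the Jacobian of the elementary symmetric functions, and a standard identity expresses $\det\bigl(e_{r-1}(\hat c_p)\bigr)$ as $\pm \prod_{p < p'}(c_p - c_{p'})$, the Vandermonde in the $c_p$. Hence it is nonzero precisely because the $c_p$ are pairwise distinct, and $d\theta_x$ is an isomorphism. I expect this determinant computation to be the only nontrivial step; it can be done either by citing the standard fact about the Jacobian of the power-sum-to-elementary change of variables, or directly by row-reducing using the recursion $e_r(c_1,\dots,c_l) = e_r(\hat c_p) + c_p\, e_{r-1}(\hat c_p)$ to convert the matrix into a confluent Vandermonde. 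Since the lemma is attributed to the proof of \cite[Theorem 3.1]{BKparabpres}, a brief indication of this argument, or simply a reference, suffices.
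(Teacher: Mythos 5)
Your proposal is correct. The paper does not prove this lemma itself but cites the proof of \cite[Theorem 3.1]{BKparabpres}, and your argument --- unwinding the matrix products for the comorphism formula, then observing that at a tuple of scalar matrices $d\theta_x$ decomposes into $n^2$ copies of the $l\times l$ matrix $\bigl(e_{r-1}(c_q : q\ne p)\bigr)_{r,p}$, whose determinant is $\pm\prod_{p<p'}(c_p-c_{p'})$ --- is precisely the standard argument used there.
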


We observe by \eqref{kappa3} for $i,j \in \mathcal{I}_n$ that
\begin{equation} \label{kernel}
\begin{cases}
  \kappa_l(S_{i,j}^{(r)}) = 0 & \text{if $l$ is even and $r > l$;} \\
  \kappa_l(S_{i,j}^{(r)}) = -\frac{\phi}{2} \kappa_l(S_{i,j}^{(r-1)}) &
     \text{if $l$ is odd and $r > l$.}
\end{cases}
\end{equation}
Following \cite[$\S$3.14]{MNO}, we say
$(i,j,r)$ is {\em admissible} if $i,j \in \mathcal{I}_n$, 
$1 \le r \le l$, and
\[
  \begin{cases}
  i + j \le 0 & \text{if $\phi = +$ and $r$ is even;} \\
  i + j < 0 & \text{if $\phi = +$ and $r$ is odd;} \\
  i + j < 0 & \text{if $\phi = -$ and $r$ is even;} \\
  i + j \le 0 & \text{if $\phi = -$ and $r$ is odd.} \\
  \end{cases}
\]
Now consider the standard filtration on $U(\mf{h})$ defined by declaring 
that each $x \in \mf{h}$ is in degree $1$.  This induces a filtration
on the subalgebra $\kappa_l(Y_n^\phi)$ so that 
$\gr \kappa_l(Y_n^\phi)$ is a subalgebra of $\gr U(\mf{h})$.
Note by (\ref{kappa3}) that each $\kappa_l(S_{i,j}^{(r)})$ belongs to the
filtered degree $r$ component of $U(\mathfrak{h})$.

\begin{Theorem} \label{freely_gen}
The 
elements
$
\left\{\gr_r \kappa_l(S_{i,j}^{(r)}) 
  \:\Big|\: (i,j,r) \: \text{is admissible} \right\}
$
are algebraically independent generators for
the commutative algebra $\gr \kappa_l(Y_n^\phi)$.
Hence the monomials in the elements 
$
\left\{\kappa_l(S_{i,j}^{(r)})
  \:\Big|\: (i,j,r) \: \text{is admissible} \right\}
$
taken in some fixed order form a basis for $\kappa_l(Y_{n}^{\phi})$.
\end{Theorem}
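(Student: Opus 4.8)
The plan is to prove the two assertions of Theorem~\ref{freely_gen} in the order stated: first that the elements $\gr_r\kappa_l(S_{i,j}^{(r)})$ for admissible $(i,j,r)$ are algebraically independent generators of the commutative algebra $\gr\kappa_l(Y_n^\phi)$, and then to deduce the PBW basis statement as a formal consequence. For the first part I would work entirely on the level of associated graded algebras, where $\gr U(\mf h)$ is a polynomial ring on the coordinate functions of $\mf h$ and everything becomes commutative algebraic geometry.

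The key computation is to identify $\gr_r\kappa_l(S_{i,j}^{(r)})$ explicitly. By (\ref{kappa3}), the top filtered degree piece of $\kappa_l(S_{i,j}^{(r)})$ is exactly $\gr_r$ of $\sum_{p_1<\cdots<p_r} s_{i,j}(e_{p_1,p_1}\cdots e_{p_r,p_r})$ — the correction terms from the odd case carry strictly smaller filtered degree and drop out. Using (\ref{sijmult}) and the fact that $\gr s_{i,j}(e_{p,p})$ is (up to the harmless sign $\phi^{\hat\imath\hat p}$, which is irrelevant to generation) the coordinate function $f_{a,b}$ with $\row(a)=i,\row(b)=j,\col(a)=\col(b)=p$, one sees that $\gr_r\kappa_l(S_{i,j}^{(r)})$ is precisely (a sign times) the value at the relevant point of the $r$th elementary symmetric polynomial in the $n\times n$ matrix-valued coordinates $A_{1-l},\dots,A_{l-1}$, where $A_p$ is the block of $\gr\mf h$ indexed by column~$p$. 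In the language of Lemma~\ref{bk3.1}, identifying $\gr U(\mf h)$ with $\C[X]$ (using the isomorphism (\ref{h}): the block $A_p$ and $A_{-p}$ are constrained by the skew-symmetry defining $\mf g^\eps_{nl}$, but the coordinates $x_{i,j}^{[p]}$ for $0<p$, together with those for $p=0$ subject to the $\mf g_n^\eps$-relation, freely coordinatize $\gr\mf h$), the subalgebra $\gr\kappa_l(Y_n^\phi)$ is the image of the comorphism $\theta^*$. One then checks that the surviving, algebraically-independent entries of $\theta^*(y_{i,j}^{[r]})$ are exactly those indexed by admissible $(i,j,r)$: the constraint $A_{-p}=-J_nA_p^TJ_n$ (or the appropriate signed variant) forces symmetry/antisymmetry relations among the entries of $B_r$ depending on the parity of $r$ and the sign $\phi$, which is precisely the admissibility condition of \cite[$\S$3.14]{MNO}, and no further relations occur.

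To prove there are no further relations — i.e. that the admissible $\gr_r\kappa_l(S_{i,j}^{(r)})$ are algebraically independent and generate all of $\gr\kappa_l(Y_n^\phi)$ — I would use the surjectivity-and-dimension part of Lemma~\ref{bk3.1}: $d\theta_x$ is an isomorphism at a point $x=(c_{1-l}I_n,\dots,c_{l-1}I_n)$ with pairwise distinct $c_p$. Restricting $\theta$ to the subvariety $X^\eps\subset X$ cut out by the skew-symmetry relations (so $X^\eps\cong\operatorname{Spec}\gr\mf h$) and $\theta(X^\eps)$ to the corresponding subvariety $Y^\phi\subset Y$, the differential remains an isomorphism onto the tangent space of $Y^\phi$ at a point of this shape (one must check such a point lies in $X^\eps$ after a suitable choice, e.g. $c_{-p}=-c_p$ or $c_{-p}=$ appropriate reflection of $c_p$, which is compatible with the relations). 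Hence $\theta|_{X^\eps}$ is dominant, so $\theta^*$ is injective on $\C[Y^\phi]$, whence $\dim\gr\kappa_l(Y_n^\phi)=\dim Y^\phi=\#\{\text{admissible }(i,j,r)\}$; combined with the fact that the admissible elements generate (shown in the previous step via the explicit form of $\theta^*$ and the relations (\ref{kernel}) killing all $r>l$), this forces algebraic independence. Finally, the PBW basis statement follows by the standard argument: a filtered algebra whose associated graded is a polynomial ring on $\gr_r$ of lifted generators has, as a vector-space basis, the ordered monomials in those generators.

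The main obstacle I expect is the bookkeeping around the skew-symmetry relations: pinning down exactly which entries $x_{i,j}^{[p]}$ survive as independent coordinates on $\gr\mf h$ (and matching this with (\ref{h})), and then showing that the induced relations on the elementary symmetric matrices $B_r$ are \emph{precisely} the admissibility constraints and nothing more — in particular that the $p=0$ block in the odd case, where the coordinates themselves satisfy the $\mf g_n^\eps$-relation and $\phi=\eps$, interacts correctly with the parity-of-$r$ symmetry pattern. Checking that a diagonal point $x=(c_pI_n)$ with the right reflection symmetry in the $c_p$ genuinely lies on $X^\eps$ and that the $c_p$ can still be chosen pairwise distinct (needing $l$ distinct scalars symmetric under $p\mapsto -p$, which is fine since $0\notin\mathcal I_l$ when... — actually $0\in\mathcal I_l$ when $l$ is odd, so one takes $c_0=0$ and $c_{-p}=-c_p$ nonzero distinct) is a small but necessary verification. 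Everything else is routine once the correspondence with Lemma~\ref{bk3.1} is set up.
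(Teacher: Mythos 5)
Your proposal follows essentially the same route as the paper's proof: identify $\gr U(\mf h)$ with the coordinate ring of the skew-symmetric subvariety of tuples $(A_{1-l},\dots,A_{l-1})$, recognize $\gr_r\kappa_l(S_{i,j}^{(r)})$ as $\theta^*(y_{i,j}^{[r]})$ via Lemma~\ref{bk3.1}, verify that $\theta$ maps this subvariety into the subvariety cut out by the $(-1)^r\phi^{\hat\imath+\hat\jmath}$-symmetry relations (whose coordinate ring is freely generated by the admissible $y_{i,j}^{[r]}$), and deduce dominance --- hence injectivity of $\theta^*$ and algebraic independence --- from the differential isomorphism at a diagonal point with $c_{-p}=-c_p$ pairwise distinct, with generation handled by the relations \eqref{kernel} and \eqref{sij_rel}. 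The details you flag as requiring verification (the skew-symmetry bookkeeping, the point lying on the subvariety, the dimension count) are exactly the ones the paper carries out, so the proposal is correct and matches the paper's argument.
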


\begin{proof}
As in \cite[(3.6.4)]{MNO},
we have for all $i, j \in \mathcal{I}_n$ the following relation in $Y_n^\phi[[u^{-1}]]$:
\begin{equation} \label{sij_rel}
  \phi^{\hat \imath + \hat \jmath}  S_{-j,-i}(-u) =
     S_{i,j}(u) + \phi \frac{S_{i,j}(u) - S_{i,j}(-u)}{2 u}.
\end{equation}
By \eqref{kernel} and \eqref{sij_rel}
monomials in the elements 
$\left\{\gr_r \kappa_l(S_{i,j}^{(r)}) \: \Big| \: (i,j,r) \text{ is admissible}\right\}$ taken
in some fixed order
generate $\gr \kappa_l(Y_n^\phi)$, so it suffices to prove 
they are algebraically independent.
Let notation be as in Lemma \ref{bk3.1}.
Let $V$ be the closed subspace of
$X$ defined by the ideal $I$ generated by
\[
  \left\{ x_{i,j}^{[r]} + \phi^{\hat \imath + \hat \jmath} x_{-j,-i}^{[-r]}
  \: \Big| \: i,j \in \mathcal{I}_n, r \in \mathcal{I}_l\right\}.
\]
As $\mf{h}$ is the vector space
spanned by 
$\{s_{i,j}(e_{p,p})\: | \: i, j \in \mathcal{I}_n, p \in \mathcal{I}_l\}$
subject only to the relations
$
s_{i,j}(e_{p,p}) = - \phi^{\hat \imath + \hat \jmath} s_{-j,-i}(e_{-p,-p}),
$
we can identify $\gr U(\mf{h})$ with $\C[V]$, by declaring that
$\gr_1 s_{i,j}(e_{p,p}) = x_{i,j}^{[p]}+I$.

Let $W$ be the closed subspace of $Y$ 
defined by the ideal $J$ generated by
$$
\left\{y_{i,j}^{[r]} - (-1)^r \phi^{\hat \imath + \hat \jmath} y_{-j,-i}^{[r]}
\:\Big|\: i,j \in \mathcal{I}_n, r=1,\dots,l\right\}.
$$
We claim that $\theta(V) \subseteq W$, i.e. $\theta^*(J) \subseteq I$.
To see this note that
\begin{align*}
\theta^* (y_{i,j}^{[r]}) &= 
  \sum_{\substack{
  i_1, \dots, i_{r-1} \in \mathcal{I}_n \\
  p_1, \dots, p_r \in \mathcal{I}_l \\
  p_1 < \dots < p_r }}
  x_{i,i_1}^{[p_1]} x_{i_1, i_2}^{[p_2]} \dots x_{i_{r-1},j}^{[p_r]} \\
  &\equiv  (-1)^r \phi^{\hat \imath + \hat \jmath} \sum_{\substack{
  i_1, \dots, i_{r-1} \in \mathcal{I}_n \\
  p_1, \dots, p_r \in \mathcal{I}_l \\
  p_1 < \dots < p_r }}
  x_{-j,-i_{r-1}}^{[-p_r]} x_{-i_{r-1}, -i_{r-2}}^{[-p_{r-1}]} 
    \dots x_{-i_{1},-i}^{[-p_1]} 
  \pmod{I} \\
  &\equiv (-1)^r \phi^{\hat \imath + \hat \jmath} \theta^*(y_{-j,-i}^{[r]}) \pmod{I}.
\end{align*}
Hence
$\theta^*\left(y_{i,j}^{[r]} - (-1)^r \phi^{\hat \imath+ \hat \jmath} 
y_{-j,-i}^{[r]}\right) \in I$.

Choose $x = (c_{1-l} I_n , \dots, c_{l-1} I_n) \in X$ so that
$c_{1-l}, \dots, c_{l-1}$ are pairwise distinct and
$c_{i} + c_{-i} = 0$.  Then $x$ belongs to $V$.
Now apply Lemma \ref{bk3.1} to deduce that
$d \theta_x: T_x (V) \to T_{\theta(x)}(W)$ is injective.
An easy calculation shows that $\dim V = \dim W$, hence
$d \theta_x: T_x (V) \to T_{\theta(x)}(W)$ is an isomorphism.
By \cite[Theorem 4.3.6(i)]{Springer} this implies that
$\theta:V \rightarrow W$ is a dominant morphism, so the comorphism
$\theta^*:\C[W] \rightarrow \C[V] = \gr U(\mathfrak{h})$ is injective.
As $\C[W]$ is freely generated by the elements
$\left\{y_{i,j}^{[r]} \: \Big| \: (i,j,r) \: \text{is admissible} \right\}$, we deduce that the elements
$\left\{\theta^*(y_{i,j}^{[r]}) \: \Big| \: (i,j,r) \: \text{is admissible} \right\}$
are algebraically independent too.  
It remains to observe by applying $\gr_r$ to (\ref{kappa3}) 
and using \eqref{sijmult}
that
$$
\gr_r \kappa_l(S_{i,j}^{(r)}) =
  \sum_{\substack{
  i_1, \dots, i_{r-1} \in \mathcal{I}_n \\
  p_1, \dots, p_r \in \mathcal{I}_l \\
  p_1 < \dots < p_r }}
  x_{i,i_1}^{[p_1]} x_{i_1, i_2}^{[p_2]} \dots x_{i_{r-1},j}^{[p_r]}
=\theta^*(y_{i,j}^{[r]}).
$$
\end{proof}

\begin{Corollary}
The elements
\begin{equation}\label{copy}
\begin{array}{rl}
  \left\{S_{i,j}^{(r)} \:\Big|\: i,j \in \mathcal{I}_n, r > l\right\}_{\phantom{S}}
    & \text{if $l$ is even;} \\  \\
\left\{ S_{i,j}^{(r)} +\frac{\phi}{2}S_{i,j}^{(r-1)}\: \Big| \: i,j \in \mathcal{I}_n, 
       r > l\right\}_{\phantom{S}} & \text{if $l$ is odd}
\end{array}
\end{equation}
generate the kernel of $\kappa_l$.
\end{Corollary}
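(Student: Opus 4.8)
The plan is to show that the two-sided ideal $K$ generated by the elements in (\ref{copy}) equals $\ker\kappa_l$. One inclusion is immediate: the elements in (\ref{copy}) lie in $\ker\kappa_l$ by (\ref{kernel}). For the reverse inclusion, the strategy is to compare the quotient $Y_n^\phi/K$ with $\kappa_l(Y_n^\phi)$ via the induced surjection $\bar\kappa_l: Y_n^\phi/K \twoheadrightarrow \kappa_l(Y_n^\phi)$, and to prove this is an isomorphism by a dimension (really, associated-graded) count. Concretely, I would equip $Y_n^\phi$ with its standard filtration (so $S_{i,j}^{(r)}$ has degree $r$), pass to the associated graded algebra, and use the PBW theorem for $Y_n^\phi$ from \cite[$\S$3.8]{MNO}: $\gr Y_n^\phi$ is a polynomial algebra on the symbols $\gr_r S_{i,j}^{(r)}$ for $(i,j,r)$ admissible with no upper bound on $r$ (i.e. $r \ge 1$ instead of $1 \le r \le l$).

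The key step is then to identify $\gr(Y_n^\phi/K)$. The images of the generators in (\ref{copy}) in $\gr Y_n^\phi$ are exactly $\gr_r S_{i,j}^{(r)}$ for $i,j \in \mathcal{I}_n$ and $r > l$ when $l$ is even, and (using $\gr_r(S_{i,j}^{(r)} + \frac\phi2 S_{i,j}^{(r-1)}) = \gr_r S_{i,j}^{(r)}$ since the second term has strictly lower degree) the same symbols when $l$ is odd. Using the relation (\ref{sij_rel}) to rewrite the non-admissible symbols $\gr_r S_{i,j}^{(r)}$ for $r > l$ in terms of admissible ones, one sees that the ideal of $\gr Y_n^\phi$ generated by these leading terms is precisely the ideal generated by all admissible polynomial generators with $r > l$. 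Hence the quotient of $\gr Y_n^\phi$ by that ideal is the polynomial algebra on $\{\gr_r S_{i,j}^{(r)} \mid (i,j,r)\text{ admissible}, r \le l\}$. This gives $\gr(Y_n^\phi / K)$ a spanning set (of monomials) indexed by the admissible triples with $r \le l$, so $\dim_{\text{gr}}$ in each filtered degree is at most that of the polynomial algebra on these finitely many generators.

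On the other side, Theorem~\ref{freely_gen} tells us that $\gr \kappa_l(Y_n^\phi)$ is freely (polynomially) generated by $\{\gr_r\kappa_l(S_{i,j}^{(r)}) \mid (i,j,r)\text{ admissible}\}$ — which by the constraint $1 \le r \le l$ in the definition of admissibility is exactly the same index set. The surjection $\bar\kappa_l$ is filtered, inducing a surjection $\gr(Y_n^\phi/K) \twoheadrightarrow \gr\kappa_l(Y_n^\phi)$ that sends the spanning monomials onto the free polynomial generators' monomials; since the target is free on a set of the same cardinality that the source is spanned by, this graded surjection is an isomorphism (a surjection from an algebra spanned by monomials indexed by $\mathcal{A}$ onto a polynomial algebra freely generated by $\mathcal{A}$ must be injective, as any relation in the source would force a relation among the free generators). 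Therefore $\bar\kappa_l$ itself is an isomorphism, so $\ker\kappa_l = K$.

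The main obstacle I anticipate is the bookkeeping in the second step: verifying carefully that, modulo the symbols coming from (\ref{copy}) and the graded shadow of (\ref{sij_rel}), every non-admissible leading symbol $\gr_r S_{i,j}^{(r)}$ with $r>l$ is genuinely expressible in terms of the listed generators, and conversely that we are not accidentally killing any admissible generator with $r \le l$. The odd-$l$ case needs slight extra care because the generators in (\ref{copy}) are the combinations $S_{i,j}^{(r)} + \frac\phi2 S_{i,j}^{(r-1)}$ rather than $S_{i,j}^{(r)}$ alone; one must check that passing to leading terms loses nothing, i.e. that the ideal these combinations generate still captures all of $\ker\kappa_l$, which follows from the descending recursion in the second line of (\ref{kernel}) together with the base cases $r = l+1, l+2, \dots$ Everything else is a formal consequence of the PBW theorems on both sides.
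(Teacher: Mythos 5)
Your proposal is correct and follows essentially the same route as the paper: both show the ideal lies in $\ker\kappa_l$ via (\ref{kernel}), establish that $Y_n^\phi$ modulo that ideal is spanned by ordered monomials in the admissible $S_{i,j}^{(r)}$ with $r\le l$, and then use the linear independence of their images from Theorem~\ref{freely_gen} to conclude $\bar\kappa_l$ is an isomorphism. The only difference is cosmetic: the paper simply cites \cite[$\S$3.14]{MNO} for the spanning statement, whereas you re-derive it by passing to leading terms and using \eqref{sij_rel}.
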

\begin{proof}
Let $I$ denote the two-sided ideal of $Y_n^{\phi}$ generated by
the elements listed in (\ref{copy}).
It is obvious that $\kappa_l$ induces a map 
$\bar \kappa_l : Y_n^{\phi} / I \to \kappa_l(Y_n^{\phi})$.  
Since $Y_n^{\phi} / I$ is spanned by the set of all monomials in the elements
$\left\{S_{i,j}^{(r)} + I \:\Big|\: (i,j,r)\text{ is admissible}\right\}$
taken in some fixed order by \cite[$\S$3.14]{MNO}, 
and the images of these monomials are linearly independent in
$\kappa_l(Y_n^\phi)$ by Theorem~\ref{freely_gen},
we deduce that $\bar \kappa_l$ is an isomorphism.
\end{proof}

We also obtain a new proof of the PBW theorem for twisted Yangians, different from the
one in \cite[$\S$3]{MNO}.
\begin{Corollary}
The set of all monomials in the elements $\left\{S_{i,j}^{(r)} \: \Big| \: (i,j,r) \text{ is admissible} \right\}$
taken in some fixed order forms a basis for $Y_n^{\phi}$.
\end{Corollary}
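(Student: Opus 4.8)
The plan is to deduce this PBW theorem from Theorem~\ref{freely_gen} by letting the level $l$ grow. Fix the sign $\phi$ and the integer $n\geq 1$ (with $n$ even whenever $\phi=-$, as is forced by the very definition of $\mf{g}_n^\phi$). For the spanning half of the statement there is nothing new to prove: by \cite[$\S$3.14]{MNO}, whose argument uses only the defining relations of $Y_n^\phi$ from \cite[$\S$3.8]{MNO} and not the PBW theorem, the ordered monomials in $\{S_{i,j}^{(r)}\:|\:(i,j,r)\text{ admissible}\}$ already span $Y_n^\phi$. So the content is linear independence.

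Suppose, for contradiction, that a nontrivial $\C$-linear relation $\sum_\alpha c_\alpha M_\alpha=0$ holds in $Y_n^\phi$, where the $M_\alpha$ are pairwise distinct ordered monomials in the admissible generators. Let $L$ be the largest value of $r$ occurring among the factors $S_{i,j}^{(r)}$ of the $M_\alpha$, and pick any $l\geq L$; taking $\eps=\phi$ if $l$ is odd and $\eps=-\phi$ if $l$ is even makes all the standing hypotheses of the introduction consistent with this $\phi$, $n$ and $l$, so the homomorphism $\kappa_l:Y_n^\phi\to U(\mf{h})$ is defined. Every triple $(i,j,r)$ occurring in the $M_\alpha$ then satisfies $1\leq r\leq l$ and the same parity conditions relating $i+j$ and $r$ as before, hence is admissible in the sense used in Theorem~\ref{freely_gen}. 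Applying the algebra homomorphism $\kappa_l$ to the relation, and noting that $\kappa_l$ carries each ordered monomial $M_\alpha$ to the corresponding ordered monomial in the elements $\kappa_l(S_{i,j}^{(r)})$, we obtain a nontrivial dependence among ordered monomials in $\{\kappa_l(S_{i,j}^{(r)})\:|\:(i,j,r)\text{ admissible}\}$ inside $\kappa_l(Y_n^\phi)$. This contradicts the second assertion of Theorem~\ref{freely_gen}, so all $c_\alpha=0$ and the monomials are linearly independent; combined with the spanning statement this proves the claim.

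The only point requiring a moment's care — and the closest thing to an obstacle — is the bookkeeping that the level-$l$ apparatus really does realize $Y_n^\phi$ for arbitrarily large $l$: one must observe that the parity constraints ``$\phi=\eps$ if $l$ is odd, $\phi=-\eps$ if $l$ is even'' can be met by a suitable choice of $\eps$ for every $l$, and that the remaining constraint ``$\phi=+$ if $n$ is odd'' is automatically in force, since $\mf{g}_n^-$ is defined only for $n$ even. Once this is noted the argument above goes through verbatim.
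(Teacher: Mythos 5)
Your argument is correct and is exactly the paper's proof, merely written out in full: spanning comes from the symmetry relation (3.6.4) of [MNO] underlying admissibility, and linear independence follows by applying $\kappa_l$ for $l$ at least the largest $r$ occurring and invoking Theorem~\ref{freely_gen}, the sign bookkeeping for $\eps$ versus $\phi$ being handled just as you describe. No further comment is needed.
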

\begin{proof}
It is clear from \eqref{sij_rel} that such monomials span $Y_n^\phi$.  The fact that they are linearly 
independent follows from
Theorem \ref{freely_gen} by taking sufficiently large $l$.
\end{proof}

\section{The finite $W$-algebra}

In $\S4$ below we will prove the following theorem:

\begin{Theorem} \label{containedinW}
For $ i,j \in \mathcal{I}_n$ and $r \geq 1$,
the element
$s_{i,j}(\omega_r)$ belongs to $W_{n,l}^{\epsilon}$.
\end{Theorem}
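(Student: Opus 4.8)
The plan is to verify directly, for each $x \in \mf{m}$, that $\pr([x, s_{i,j}(\omega_r)]) = 0$, working at the level of generating functions. Recall that $\mf m$ is spanned by the root vectors $f_{a,b}$ with $\col(b) < \col(a)$, so it suffices to treat $x = f_{a,b}$ for such $a, b$, and by the skew-symmetry $f_{a,b} = -\eps^{\hat a + \hat b} f_{-b,-a}$ we may further reduce to a convenient range of columns. The idea is to package the $\omega_r$ into the single matrix identity coming from $\rdet \Omega(u)$ (resp. $\rdet \bar\Omega(u)$), so that we are computing $\pr([f_{a,b}, s(\omega(u))])$ as an element of $M_n \otimes U(\mf p)[[u^{-1}]][u]$. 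The first step is therefore to record the commutation relations $[f_{a,b}, f_{c,d}]$ inside $\mf g$ in terms of the row/column data, and to translate these via \eqref{sij} and \eqref{sijmult} into relations between $f_{a,b}$ and the entries $s_{k,l}(e_{p,q})$ of $s(\Omega(u))$; the presence of the signs $\phi^{\hat\imath\hat p + \hat\jmath\hat q}$ means one must be careful, but for $x \in \mf m$ (so $\col(a) > \col(b)$) the relevant entries of $\Omega(u)$ are the strictly-below-diagonal constants $\pm 1, \pm\phi$ together with the diagonal $u_q$ and the above-diagonal $e_{p,q}$, and the grading kills most terms.

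The key technical step is a ``column-straightening'' identity for the row determinant. Since $\pr$ annihilates $I$ and $I$ is the left ideal generated by $\{x - \chi(x) : x \in \mf m\}$, computing $\pr([f_{a,b}, s(\omega(u))])$ amounts to moving all factors lying in $\mf m$ (with their $\chi$-values $0$ or $1$, as in \eqref{prchi}) to the left and discarding them. I would expand $[f_{a,b}, \rdet \Omega(u)]$ by the Leibniz rule down the rows of $\Omega(u)$, so that $[f_{a,b}, -]$ hits one matrix entry at a time; each such commutator either produces a multiple of another entry in the same row (from the $e_{p,q}$ or $u_q$ terms) or produces a term that, after applying $s$ and projecting, is controlled by \eqref{prchi}. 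The goal is to show that after collecting, the total is a telescoping sum whose terms cancel in pairs — essentially a Plücker-type relation for $\rdet$ — leaving $\pr([f_{a,b}, s(\omega(u))]) = 0$. In the odd-$l$ case one must run the same computation for $\bar\Omega(u)$ and check that the correction series $\sum_r (-2\phi u)^{-r} \rdet \bar\Omega(u)$ is needed precisely to absorb the discrepancy coming from the $e_{0,0}$ entry and the $\frac{1}{2}$ in the definition of $e$; this is where the ``crossover'' sign $\phi = -\eps$ for even $l$ (resp. $\phi = \eps$ for odd $l$) and the shift $\rho_q$ enter, and getting every sign to line up is the part that requires genuine care.

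The main obstacle I anticipate is purely combinatorial bookkeeping: organizing the Leibniz expansion of $[f_{a,b}, \rdet \Omega(u)]$ so that the cancellations are visible, rather than drowning in terms indexed by subsets of rows and columns. A workable strategy is to introduce, for each pair $p' < p$ of columns, a ``modified'' matrix $\Omega(u)$ with one entry altered, express the problematic commutator term as a row determinant of such a matrix, and then use the fact that a row determinant with two proportional (or suitably related) rows vanishes — thereby reducing an a priori enormous sum to a handful of genuinely nonzero contributions that then cancel against the $\chi$-contributions from \eqref{prchi}. Because the computation is long, I would state and prove the needed row-determinant lemma separately, then feed it into the grading argument; the grading ensures that only finitely many $u$-degrees can contribute to any fixed $\pr([f_{a,b}, s(\omega_r)])$, so the generating-function manipulations are legitimate. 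This is exactly the lengthy calculation deferred to $\S4$.
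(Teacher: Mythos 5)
Your overall strategy coincides with the one the paper follows in \S4: verify $\pr([x,s_{h,k}(\omega(u))])=0$ directly by commuting $x$ through the row determinant, evaluating the resulting $\mf{m}$-factors by $\chi$ via \eqref{prchi}, and simplifying with row and column operations on $\rdet$; and you correctly sense that the series $\sum_{r}(-2\phi u)^{-r}\rdet\bar\Omega(u)$ in the odd case exists to absorb a leftover term. But as written the proposal is a plan for the calculation rather than the calculation itself --- the entire content of this theorem is the verification of the cancellations, and none of them is actually established --- and there are two concrete gaps in the plan.

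First, you never reduce to a Lie-algebra generating set of $\mf{m}$: you propose to treat every $f_{a,b}$ with $\col(b)<\col(a)$, cutting down only by skew-symmetry. The proof in the paper checks only the elements $s_{i,j}(e_{q+2,q})$, i.e. column gap exactly $2$, which generate $\mf{m}$ as a Lie algebra; this suffices because $\chi$ vanishes on $[\mf{m},\mf{m}]$ (the grading is even, so $[\mf{m},\mf{m}]\subseteq\bigoplus_{r\le-4}\mf{g}(r)$ pairs trivially with $e\in\mf{g}(2)$), hence $I$ is $\operatorname{ad}\mf{m}$-stable and $[x,u],[y,u]\in I$ force $[[x,y],u]\in I$. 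Without this reduction the Leibniz expansion must be run for every column gap and the bookkeeping you worry about becomes unmanageable. Second, and more seriously, your proposed cancellation mechanism (``telescoping in pairs'', a ``Pl\"ucker-type relation'', vanishing of a row determinant with proportional rows) accounts only for the two terms of $[f_{a,b},f_{c,d}]$ that are present in type A. In $\mf{g}_{nl}^{\eps}$ the bracket has four terms, and the two extra ones produce contributions of the shape $s_{h,-i}(\cdots)\,s_{-j,k}(\cdots)$ --- the quantities $C$ and $D$ of Lemmas \ref{allcase}--\ref{oddcase}. These are not part of any telescoping of $\rdet\Omega(u)$: computing $\pr(C)$ requires a second, nested application of the commutator lemma, and in the odd case $\pr(-C+D)$ for $\rdet\Omega(u)$ alone is a \emph{nonzero} multiple of a product of two sub-determinants that cancels only after being matched, coefficient by coefficient in $u$, against the corresponding leftover from the $\rdet\bar\Omega(u)$ series. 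Since these twisted terms are precisely where the sign $\gamma$, the crossover $\phi=-\eps$, and the shifts $\rho_q$ are actually used, a proof that does not engage with them has not yet proved the theorem.
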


In the remainder of this section we explain 
how to deduce the main results formulated in the
introduction from this theorem. 

The finite $W$-algebra $W_{n,l}^\eps$ 
possesses two natural filtrations. 
The first of these, the {\em Kazhdan filtration},
is the filtration on $W_{n,l}^\eps$ induced by the filtration
on $U(\mathfrak{g})$ generated by declaring that
each element $x \in \mf{g}(r)$ in the grading (\ref{grading})
is of 
degree $r/2+1$. 
The fundamental {\em PBW theorem} for finite $W$-algebras 
asserts that the associated
graded algebra $\gr W_{n,l}^\eps$
under the Kazhdan filtration 
is isomorphic to the coordinate
algebra of the Slodowy slice  at 
$e$; see e.g. \cite[Theorem 4.1]{GG}.

The second important filtration, called the {\em good filtration}
in [BGK], is 
defined as follows.
The grading (\ref{grading})
induces a non-negative grading on $U(\mathfrak{p})$.
Although $W_{n,l}^\eps$ is not a graded subalgebra of
$U(\mathfrak{p})$, this grading on $U(\mathfrak{p})$ still induces
a filtration on $W_{n,l}^\eps$ 
with respect to which the associated graded algebra
$\gr' W_{n,l}^\eps$ 
is naturally identified with a 
graded subalgebra of $U(\mf{p})$.
The fundamental result about the good filtration, which is a
consequence of the PBW theorem and \cite[(2.1.2)]{Pjoseph}, is that
\begin{equation} \label{assgr}
\gr' W_{n l}^\eps = U(\mf{g}_e)
\end{equation}
as subalgebras of $U(\mathfrak{p})$,
where $\mf{g}_e$ denotes the centralizer of $e$ in $\mf{g}$;
see also \cite[Theorem 3.5]{BGK}.
The element $s_{i,j}(\omega_{r+1})$ belongs to the subspace of elements of degree $r$
in the good filtration, and we have
that $s_{i,j}(\omega_{r+1}) \in W_{n,l}^\eps$
by Theorem~\ref{containedinW}.
So it makes sense to define
\begin{equation}\label{cij}
f_{i,j; r} = \gr'_r s_{i,j}(\omega_{r+1}) \in U(\mathfrak{g}_e)
\end{equation}
for $r \geq 0$.
Explicitly, we have that
\begin{equation} \label{fijr}
f_{i,j;r} = 
 \sum_{\substack{p, q \in \mathcal{I}_l \\q-p=2r}}
  \alpha_{p,q} s_{i,j}(e_{p, q})
\end{equation}
where
\[
  \alpha_{p,q} = 
  \begin{cases}
    1 & \text{if $q < 0$;} \\
    \phi (-1)^{q/2} & \text{if $p < 0$ and $q \ge 0$ and $l$ is odd;} \\
    (-1)^{(q+1)/2} & \text{if $p < 0$ and $q > 0$ and $l$ is even;} \\
    (-1)^{(q-p)/2} & \text{if $p \ge 0$.}
  \end{cases}
\]
This formula comes from the fact that the monomial $e_{p,q}$ where $q-p=2r$
occurs 
in $\rdet \Omega(u)$ as a coefficient of $u^{l-(r+1)}$ (and thus in
$\omega_{r+1}$) because of the element 
$\sigma = (p, q, q-2, \dots, p+2)$ in
the symmetric group on $\mathcal{I}_l$.
Now $\alpha_{p,q} = \sgn(\sigma) * N$, where $N$ is the number of 
$-1$'s strictly below and strictly to the left of $e_{p,q}$ in the matrix $\Omega(u)$.

So \eqref{fijr} shows that each $f_{i,j;r} \in U(\mathfrak{g}_e)$ is 
an element of $\mathfrak{g}$,
hence belongs to $\mathfrak{g}_e$.

\begin{Lemma} \label{centralizer}
The elements $\{f_{i,j;r} \: | \: (i,j, r+1) \: \text{is admissible}\}$
form a basis for $\mf{g}_e$.
\end{Lemma}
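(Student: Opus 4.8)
The plan is to prove this by a dimension count together with a linear‑independence argument, exploiting the explicit formula \eqref{fijr} and the description of $\mathfrak{g}_e$ that one can extract from the pyramid picture. First I would compute $\dim \mathfrak{g}_e$. For a nilpotent of Jordan type $(l^n)$ in $\mathfrak{g}_{nl}^\epsilon$, the centralizer in $\mathfrak{gl}_{nl}$ has dimension $nl \cdot n = n^2 l$ (one copy of $M_n$ for each of the $l$ diagonals matching up equal rows), and intersecting with $\mathfrak{g}_{nl}^\epsilon$ cuts this down by the symmetry $f_{a,b} \mapsto -\epsilon^{\hat a+\hat b} f_{-b,-a}$; a short count, organized diagonal by diagonal exactly as in the proof of Theorem \ref{freely_gen} (where the analogous count $\dim V = \dim W$ was needed), gives $\dim \mathfrak{g}_e = \#\{(i,j,r) : (i,j,r+1)\text{ admissible}\}$. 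So it suffices to show the $f_{i,j;r}$ with $(i,j,r+1)$ admissible are linearly independent elements of $\mathfrak{g}_e$; membership in $\mathfrak{g}_e$ is already recorded just before the lemma statement.

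For linear independence, note from \eqref{fijr} that $f_{i,j;r}$ is supported on the $r$th superdiagonal of the pyramid, i.e.\ on the span of $\{s_{i,j}(e_{p,q}) : q-p=2r\}$, so elements $f_{i,j;r}$ with different values of $r$ live in complementary subspaces and it is enough to fix $r$ and show $\{f_{i,j;r} : (i,j,r+1)\text{ admissible}\}$ is independent. Within a fixed $r$, the point is that the coefficients $\alpha_{p,q}$ in \eqref{fijr} are all nonzero (one checks this directly from the case definition of $\alpha_{p,q}$), so $f_{i,j;r}$ is a fixed nonzero linear combination of the $s_{i,j}(e_{p,q})$ over the valid range of $(p,q)$. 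The relations among the $s_{i,j}(e_{p,q})$ are exactly $s_{i,j}(e_{p,q}) = \phi^{\hat\imath\hat p+\hat\jmath\hat q}\, \epsilon^{\cdots} s_{-j,-i}(e_{-q,-p})$ coming from $f_{a,b} = -\epsilon^{\hat a+\hat b}f_{-b,-a}$ together with $\row(-a)=-\row(a)$, $\col(-a)=-\col(a)$; so $f_{i,j;r}$ and $f_{-j,-i;r}$ are proportional (with a nonzero scalar), and the admissibility condition on $(i,j,r+1)$ is precisely designed to pick exactly one representative from each such pair, while imposing no other relation. Thus the set $\{f_{i,j;r}\}$ over admissible triples maps to a set of nonzero scalar multiples of a basis of the $r$th superdiagonal summand of $\mathfrak{g}_e$, hence is a basis of that summand; summing over $r$ gives the result.

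The only genuinely fiddly point — and the one I would expect to consume most of the write‑up — is matching up the admissibility conditions (split into four cases on the parity of $r$ and the sign $\phi$) with the self‑pairing $f_{i,j;r} \leftrightarrow f_{-j,-i;r}$, keeping careful track of the sign $\phi^{\hat\imath\hat p+\hat\jmath\hat q}$ from \eqref{sij} and of the relation $\epsilon^{\hat a+\hat b}$ from the definition of $f_{a,b}$, and confirming in particular that the diagonal terms (where $(i,j)=(-j,-i)$, i.e.\ $i+j=0$) are included or excluded correctly according to whether the pairing fixes such a vector or forces it to zero. This is the same bookkeeping that underlies the ``easy calculation'' $\dim V = \dim W$ invoked in the proof of Theorem \ref{freely_gen}, so I would state it once carefully here and cross‑reference. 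Everything else is a routine dimension count and the observation that the $\alpha_{p,q}$ never vanish.
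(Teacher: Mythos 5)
Your proposal is correct and follows essentially the same route as the paper: membership in $\mathfrak{g}_e$ is already noted, $\dim\mathfrak{g}_e$ (taken from Jantzen) is matched against the count of admissible triples, and linear independence is reduced — via the grading by superdiagonal and the pairing $f_{i,j;r}\leftrightarrow f_{-j,-i;r}$ with admissibility selecting one representative per pair — to checking that each $f_{i,j;r}$ is nonzero. The paper compresses that last step into ``computing some explicit matrix coefficients,'' which is exactly the self-paired case $i+j=0$ bookkeeping you correctly identify as the only delicate point.
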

\begin{proof}
We have already observed that each $f_{i,j;r}$ belongs to
$\mathfrak{g}_e$.
By \cite[$\S$3.2]{J}, the dimension of $\mf{g}_e$ is
\[
  \begin{cases}
   n^2 l / 2 & \text{if $l$ is even}; \\
   (n^2 l - n \eps)/2 & \text{if $l$ is odd}.
  \end{cases}
\]
An easy calculation shows that this is the same as the number
of admissible triples.
Now it just remains to show that the elements
$f_{i,j;r}$ for all admissible $(i,j,r+1)$
are linearly independent.
This is easy to see on noting that all these elements are non-zero,
which follows by computing some explicit matrix coefficients.
\end{proof}

\begin{Theorem} \label{gen_cor}
The elements 
$\{s_{i,j}(\omega_r) \: | \: (i,j,r) \: \text{is admissible} \}$
generate $W_{n,l}^\eps$.
\end{Theorem}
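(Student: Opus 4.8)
The plan is to combine the PBW-type results for the two algebras in play: the "good filtration" on $W_{n,l}^\eps$ with associated graded $U(\mf{g}_e)$ from \eqref{assgr}, and the explicit basis of $\mf{g}_e$ just produced in Lemma~\ref{centralizer}. Let $W'$ denote the subalgebra of $W_{n,l}^\eps$ generated by the elements $\{s_{i,j}(\omega_r)\mid (i,j,r)\text{ admissible}\}$; by Theorem~\ref{containedinW} this is indeed a subalgebra of $W_{n,l}^\eps$, and it inherits the good filtration. The key point is \eqref{cij}: the symbol $\gr'_r s_{i,j}(\omega_{r+1})$ equals $f_{i,j;r}\in\mf{g}_e$. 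So $\gr' W'$ contains all $f_{i,j;r}$ with $(i,j,r+1)$ admissible, and by Lemma~\ref{centralizer} these span $\mf{g}_e$; hence $\gr' W'\supseteq U(\mf{g}_e)=\gr' W_{n,l}^\eps$. Since the reverse inclusion $\gr'W'\subseteq\gr'W_{n,l}^\eps$ is automatic, we get $\gr' W' = \gr' W_{n,l}^\eps$, and a standard filtered-algebra argument then forces $W' = W_{n,l}^\eps$.

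I would carry this out in the following order. First, record that $W'$ is a filtered subalgebra of $W_{n,l}^\eps$ for the good filtration, and that the good filtration on $W_{n,l}^\eps$ is exhaustive with $\gr'W_{n,l}^\eps=U(\mf g_e)$ by \eqref{assgr}. Second, observe that the filtered degree of $s_{i,j}(\omega_{r+1})$ is exactly $r$ with symbol $f_{i,j;r}$, as stated around \eqref{cij}--\eqref{fijr}; in particular, taking monomials in the generators of $W'$ in a fixed order, their symbols are the corresponding monomials in the $f_{i,j;r}$. Third, invoke Lemma~\ref{centralizer}: the $f_{i,j;r}$ with $(i,j,r+1)$ admissible form a basis of $\mf g_e$, so ordered monomials in them form a basis of $U(\mf g_e)=\gr' W_{n,l}^\eps$ by PBW for the enveloping algebra. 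Fourth, conclude $\gr'W'=\gr'W_{n,l}^\eps$, and then lift: given $w\in W_{n,l}^\eps$ of good-filtration degree $d$, subtract an element of $W'$ with the same symbol and induct on $d$ (using exhaustiveness to start and the fact that degree drops at each step) to get $w\in W'$. Hence $W'=W_{n,l}^\eps$.

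The only subtlety — and the step I would be most careful about — is the final filtered-to-unfiltered lifting argument. It works cleanly because the good filtration on $W_{n,l}^\eps$ is exhaustive and each filtered piece is finite-dimensional (being cut out inside the finite-dimensional graded pieces of $U(\mf p)$), so the induction on degree terminates and no completion issues arise; one should also note that $W'$ is closed under the filtration in the sense that $W'\cap F_d = \gr'$-preimage behaves well, which is immediate since $W'$ is a subalgebra generated by homogeneous-symbol elements. Everything else is bookkeeping: matching the admissibility condition on $(i,j,r)$ used for the generators $s_{i,j}(\omega_r)$ with the condition on $(i,j,r+1)$ used for the basis $f_{i,j;r}$ of $\mf g_e$ is just the index shift already present in \eqref{cij}, and the count of admissible triples was checked in Lemma~\ref{centralizer}. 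Thus no genuinely hard new computation is needed here; the theorem is essentially a formal consequence of Theorem~\ref{containedinW}, \eqref{assgr}, and Lemma~\ref{centralizer}.
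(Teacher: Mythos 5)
Your proposal is correct and follows exactly the paper's own argument: use \eqref{assgr}, \eqref{cij} and Lemma~\ref{centralizer} to see that the symbols $\gr'_r s_{i,j}(\omega_{r+1})$ generate $\gr' W_{n,l}^\eps = U(\mf{g}_e)$, then conclude by induction on the good filtration. The extra care you take with the lifting step is fine but not a departure from the paper's (more tersely stated) proof.
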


\begin{proof}
By \eqref{assgr}, \eqref{cij} and Lemma \ref{centralizer}, the elements
$$
\{\gr'_r s_{i,j}(\omega_{r+1}) \: | \: (i,j,r+1) \: \text{is admissible} \}
$$
generate $\gr' W_{n,l}^\eps$,
the associated graded algebra in the good filtration.
The theorem follows from this statement by induction on the filtration.
\end{proof}

Theorems~\ref{main1} and \ref{gens} from the introduction
follow from Theorems \ref{containedinW}, \ref{gen_cor}
 and \ref{images}.
Finally we include a proof of the following theorem,
which is originally due to \cite[Corollary 2.3.2]{Ly}
in a more general setting.

\begin{Theorem} \label{miurainj}
The Miura transform $\mu : W_{n,l}^{\eps} \to U(\mf{h})$ 
from (\ref{miura}) is injective.
\end{Theorem}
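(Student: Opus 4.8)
The plan is to exploit the good filtration on $W_{n,l}^{\eps}$ together with the description \eqref{assgr} of its associated graded algebra. First I would recall that $\mu = \eta \circ \xi$, where $\xi : U(\mf{p}) \to U(\mf{h})$ is induced by the projection $\mf{p} \twoheadrightarrow \mf{h}$ along $\bigoplus_{r>0}\mf{g}(r)$, and $\eta$ is an automorphism of $U(\mf{h})$; since $\eta$ is bijective, it suffices to prove that $\xi$ is injective on $W_{n,l}^{\eps}$. The grading \eqref{grading} on $U(\mf{p})$ gives the good filtration, and on passing to associated graded algebras $\xi$ induces the map $\gr' W_{n,l}^{\eps} = U(\mf{g}_e) \to \gr U(\mf{h}) = U(\mf{h})$ that is simply the restriction of the projection $\mf{p} \twoheadrightarrow \mf{h}$. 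So by a standard filtration argument, $\mu|_{W_{n,l}^{\eps}}$ is injective provided the graded map $U(\mf{g}_e) \to U(\mf{h})$ is injective, equivalently provided the projection $\mf{p}\twoheadrightarrow\mf{h}$ restricts to an \emph{injective} linear map on $\mf{g}_e$.

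The key point is therefore the purely Lie-theoretic claim: $\mf{g}_e \cap \mf{m}' = 0$, where $\mf{m}' = \bigoplus_{r>0}\mf{g}(r)$ is the kernel of $\mf{p}\twoheadrightarrow\mf{h}$. Here I would use the explicit basis of $\mf{g}_e$ furnished by Lemma~\ref{centralizer}: the elements $f_{i,j;r}$ for admissible $(i,j,r+1)$. By \eqref{fijr} each $f_{i,j;r}$ is a nonzero linear combination of the $s_{i,j}(e_{p,q})$ with $q-p=2r$, i.e. of the matrices $f_{a,b}$ with $\col(b)-\col(a) = 2r$, which by the definition of the grading means $f_{i,j;r} \in \mf{g}(2r)$. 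Thus the basis of $\mf{g}_e$ coming from Lemma~\ref{centralizer} is a \emph{graded} basis, with the elements indexed by $(i,j,r+1)$ lying in $\mf{g}(2r)$; in particular, $f_{i,j;0} \in \mf{g}(0) = \mf{h}$ and all other basis vectors lie in $\mf{m}' = \bigoplus_{r>0}\mf{g}(r)$. This immediately shows the projection to $\mf{h}$ is \emph{not} injective on $\mf{g}_e$, so a naive graded argument fails — which is exactly the subtlety the good filtration is designed to handle. Instead I would argue at the level of $U(\mf{g}_e)$ directly: the map $\xi$ does not kill the $\mf{m}'$-components of $\mf{g}_e$ but rather, because $W_{n,l}^{\eps}$ is a \emph{filtered} deformation of $U(\mf{g}_e)$, the value $\mu(s_{i,j}(\omega_{r+1}))$ determines the top symbol $f_{i,j;r}$, and hence all of $W_{n,l}^{\eps}$ inductively.

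Concretely, here is the argument I would write. Suppose $0 \neq w \in W_{n,l}^{\eps}$ with $\mu(w) = 0$; let $d$ be the degree of $w$ in the good filtration and $\bar w = \gr'_d w \in U(\mf{g}_e)_{\le d}$ its nonzero symbol. By Theorem~\ref{gen_cor} and Theorem~\ref{images} we know $\mu(s_{i,j}(\omega_{r}))=\kappa_l(S_{i,j}^{(r)})$, and by Theorem~\ref{freely_gen} the elements $\kappa_l(S_{i,j}^{(r)})$ for admissible $(i,j,r)$ are algebraically independent in $\gr U(\mf{h})$. Writing $\bar w$ as a polynomial in the $f_{i,j;r}$ (admissible), apply $\mu$: since $\mu$ is an algebra homomorphism and $\mu(s_{i,j}(\omega_{r+1})) = \kappa_l(S_{i,j}^{(r+1)})$ has filtered degree exactly $r+1$ with leading term $\gr_{r+1}\kappa_l(S_{i,j}^{(r+1)})$, the top-degree part of $\mu(w)$ in the standard filtration of $U(\mf{h})$ equals the corresponding polynomial in the $\gr_{r+1}\kappa_l(S_{i,j}^{(r+1)})$, which is nonzero by algebraic independence. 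This contradicts $\mu(w)=0$, so $w=0$. I expect the main obstacle to be making precise the bookkeeping between the good filtration (by the grading on $U(\mf{p})$) on the $W$-algebra side and the standard PBW filtration on $U(\mf{h})$ on the Yangian side — specifically verifying that $\mu$ respects these filtrations in the appropriate way so that "leading terms go to leading terms," which is what reduces injectivity of $\mu$ to the already-established algebraic independence in Theorem~\ref{freely_gen}.
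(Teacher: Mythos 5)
Your overall strategy --- reduce injectivity of $\mu$ to the algebraic independence established in Theorem~\ref{freely_gen} by passing to associated graded algebras --- is the right one, but the filtration you chose on the $W$-algebra side makes the crucial step false, and this is a genuine gap rather than deferred bookkeeping. You filter $W_{n,l}^\eps$ by the good filtration, under which $s_{i,j}(\omega_{r+1})$ has degree $r$, while on $U(\mf{h})$ you use the standard filtration, under which $\mu(s_{i,j}(\omega_{r+1}))=\kappa_l(S_{i,j}^{(r+1)})$ has degree $r+1$. These are incompatible: a product $s_{i_1,j_1}(\omega_{r_1+1})\cdots s_{i_m,j_m}(\omega_{r_m+1})$ has good degree $\sum_t r_t$, but its image under $\mu$ has standard degree $m+\sum_t r_t$. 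In particular the degree-zero generators $s_{i,j}(\omega_1)$ are invisible to the symbol $\gr'_d w$ yet contribute to the top standard-degree component of $\mu(w)$. Concretely, for $w=s_{i,j}(\omega_2)+s_{h,k}(\omega_1)s_{h',k'}(\omega_1)$ one has $\gr'_1 w=f_{i,j;1}$ (the second summand has good degree $0$ and drops out), whereas the top standard-degree part of $\mu(w)$ is $\gr_2\kappa_l(S_{i,j}^{(2)})+\gr_1\kappa_l(S_{h,k}^{(1)})\,\gr_1\kappa_l(S_{h',k'}^{(1)})$. So your assertion that the top-degree part of $\mu(w)$ equals ``the corresponding polynomial'' read off from $\bar w$ is wrong: leading terms do \emph{not} go to leading terms for this pair of filtrations, which is exactly the obstacle you flag at the end without resolving.

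The repair is to replace the good filtration by the Kazhdan filtration, which is what the paper does. Under the Kazhdan filtration (each $x\in\mf{g}(r)$ placed in degree $r/2+1$) the element $s_{i,j}(\omega_r)$ has degree exactly $r$, matching the standard degree of $\kappa_l(S_{i,j}^{(r)})$, so $\mu$ is a filtered map and $(\gr\mu)(\gr_r s_{i,j}(\omega_r))=\gr_r\kappa_l(S_{i,j}^{(r)})$ by Theorem~\ref{images}. The PBW theorem for $W_{n,l}^\eps$ (its Kazhdan-associated-graded is a polynomial algebra, freely generated by the symbols of the $s_{i,j}(\omega_r)$ for admissible $(i,j,r)$, in view of Lemma~\ref{centralizer}) combined with Theorem~\ref{freely_gen} then shows $\gr\mu$ is injective, hence so is $\mu$. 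Your preliminary observations --- that $\eta$ can be ignored, and that the naive graded argument fails because the projection $\mf{g}_e\to\mf{h}$ has a large kernel --- are correct; only the final filtration comparison needs to be replaced by the Kazhdan one.
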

\begin{proof}
Note that $\mu$ is a filtered map with respect to the Kazhdan filtration
on $W_{n,l}^{\eps}$ and the
standard filtration on
$U(\mf{h})$. 
We actually show that the associated graded map
$\gr \mu: \gr W_{n,l}^\eps \rightarrow \gr U(\mf{h})$
is injective, which implies the theorem.
Each $s_{i,j}(\omega_r)$ is in
degree $r$ under the Kazhdan filtration and  
$\kappa_l(S_{i,j}^{(r)})$ is in degree $r$ under the standard
filtration on $U(\mf{h})$.
Moreover Theorem \ref{images} 
shows that $\mu(s_{i,j}(\omega_r)) = \kappa_l(S_{i,j}^{(r)})$,
hence
$(\gr \mu)(\gr_r s_{i,j}(\omega_r)) =  \gr_r \kappa_l(S_{i,j}^{(r)})$.
So by Theorem \ref{freely_gen} and the PBW theorem for
$W_{n,l}^\eps$
we deduce that $\gr \mu: \gr W_{n,l}^\eps \rightarrow \gr U(\mf{h})$ is injective. 
\end{proof}

\section{Proof of invariance}

In this section we prove Theorem \ref{containedinW}.
We need to show for $i,j \in \mathcal{I}_n$ and $r \geq 1$
that
\begin{equation}\label{toprove}
\pr([x, s_{i,j}(\omega_r)]) = 0
\end{equation}
for all $x \in \mathfrak{m}$.
Since $\mf{m}$ is generated by the elements
\begin{equation}\label{thegens}
\left\{s_{i,j}(e_{q+2,q}) \:|\: 
   i,j \in \mathcal{I}_n,
  q \in \mathcal{I}_l, -1 \le q < l-1\right\},
\end{equation}
we just 
need to consider the actions of these elements on 
each $s_{i,j}(\omega_r)$.
Actually we work in terms of the generating series $s_{i,j}(\omega(u))$ from
(\ref{omegadef}), and we use the natural extension of $\pr$ to a homopromphism
$\pr : U(\mf{g})[u] \to U(\mf{p})[u]$.
As the calculations are lengthy, we break them up into a series of lemmas.
Throughout the section we will set
$$
\tilde \imath = 
\widehat{-\imath} = 
\begin{cases}
0&\text{if $i \leq 0$;}\\
1&\text{if $i > 0$.}
\end{cases}
$$

\begin{Lemma} \label{pos_commuter}
Let $y_1, \dots, y_m \in \mf{g}_l$. 
Let $i,j,h,k \in \mathcal{I}_n$.  Let $p,q \in \mathcal{I}_l$.
Then
\begin{align*}
  [s_{i,j}(e_{p,q}),&s_{h,k}(y_1 \otimes \dots \otimes y_m)] \\
  &= \sum_{t=1}^m s_{h,j}(y_1 \otimes \dots \otimes y_{t-1})
       s_{i,k}(e_{p,q} y_t \otimes y_{t+1} \otimes \dots \otimes y_m) \\
  &\quad  - 
       \sum_{t=1}^m s_{h,j}(y_1 \otimes \dots \otimes y_{t-1} 
           \otimes y_t e_{p,q})
       s_{i,k}(y_{t+1} \otimes \dots \otimes y_m) \\
  &\quad 
    + \gamma 
   \left(
     - \sum_{t=1}^m s_{h,-i}(y_1 \otimes \dots \otimes y_{t-1})
       s_{-j,k}(e_{-q,-p} y_t 
           \otimes y_{t+1} \otimes \dots \otimes y_m) \right. \\
  &\qquad \qquad + \left.
       \sum_{t=1}^m s_{h,-i}(y_1 \otimes \dots 
        \otimes y_{t-1} \otimes y_t e_{-q,-p})
       s_{-j,k}(y_{t+1} \otimes \dots \otimes y_m) \right)
\end{align*}
where
\begin{equation} \label{gamma}
  \gamma = \begin{cases}
     \phi^{\hat \imath \hat p + \tilde \imath \tilde{p} +
            \hat \jmath \hat q + \tilde \jmath \tilde{q}}
     \eps^{\hat p + \hat q} & \text{if $p,q \ne 0$;} \\
     \phi^{\hat \jmath \hat q + \tilde \jmath \tilde{q}}
     \eps^{\hat \imath + \hat q} & \text{if $p = 0$, $q \ne 0$;} \\
     \phi^{\hat \imath \hat p + \tilde \imath \tilde{p}}
      \eps^{\hat p + \hat \jmath} & \text{if $p \ne 0$, $q = 0$;} \\
      \eps^{\hat \imath + \hat \jmath} & \text{if $p,q = 0$,} 
   \end{cases}
\end{equation}
and $e_{p,q} y_t, y_t e_{p,q}, e_{-q,-p} y_t$, and $y_t e_{-q,-p}$
denote matrix multiplication in $M_l$.
\end{Lemma}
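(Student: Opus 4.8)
The plan is to prove Lemma~\ref{pos_commuter} by first establishing the case $m=1$ by a direct computation in $M_n \otimes U(\mf{g})$, and then bootstrapping to general $m$ using the multiplicativity property \eqref{sijmult} together with the Leibniz rule for commutators. For $m=1$, write $s_{i,j}(e_{p,q})$ explicitly as $\phi^{\hat\imath\hat p + \hat\jmath\hat q} f_{a,b}$ where $a,b \in \mathcal I_{nl}$ have $\row(a)=i,\row(b)=j,\col(a)=p,\col(b)=q$, and similarly express $s_{h,k}(y)$ in terms of the $f_{c,d}$. The commutator $[f_{a,b},f_{c,d}]$ is computed from the structure constants of $\mf{g}=\mf{g}_{nl}^\eps$: using $f_{a,b}=e_{a,b}-\eps^{\hat a+\hat b}e_{-b,-a}$, one gets four $\mf{gl}_{nl}$-commutator terms, and the relation $\row(-a)=-\row(a)$, $\col(-a)=-\col(a)$ lets one re-express everything back in terms of $s_{\bullet,\bullet}(\text{matrix units in }\mf{g}_l)$. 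The first two lines of the claimed formula will come from the terms $[e_{a,b},e_{c,d}]$ and $[e_{a,b},-\eps^{\hat c+\hat d}e_{-d,-c}]$ which only involve the original indices, while the last two lines (carrying the factor $\gamma$) come from the terms $[-\eps^{\hat a+\hat b}e_{-b,-a},e_{c,d}]$ and $[-\eps^{\hat a+\hat b}e_{-b,-a},-\eps^{\hat c+\hat d}e_{-d,-c}]$, which is exactly where the indices get negated (hence $-i,-j$ appearing) and where the accumulated sign prefactors collapse into $\gamma$.

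The main obstacle — and the bulk of the work — is the bookkeeping of signs needed to verify that the prefactor collecting all the $\phi$ and $\eps$ powers is precisely the $\gamma$ of \eqref{gamma}, with its four-way case split according to whether $p$ and/or $q$ is zero. The subtlety is that $s_{i,j}(e_{p,q})$ carries the factor $\phi^{\hat\imath\hat p+\hat\jmath\hat q}$, and after the negation of indices in the ``twisted'' commutator terms one picks up additional factors $\phi^{\widehat{(-j)}\,\widehat{(-q)}+\widehat{(-i)}\,\widehat{(-p)}}$ and $\eps^{\hat a+\hat b}\eps^{\hat c+\hat d}$ from the structure constants; rewriting $\widehat{-\imath}$ as $\tilde\imath$ (as the section's convention sets up) and combining with $\hat a + \hat b$ expressed via $\row$ and $\col$ signs is what produces the exponents $\hat\imath\hat p + \tilde\imath\tilde p + \hat\jmath\hat q + \tilde\jmath\tilde q$ on $\phi$ and $\hat p + \hat q$ on $\eps$. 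When $p=0$ or $q=0$ one has $\hat p = 0$ but $\tilde p = 0$ as well (since $0\le 0$), so certain terms drop and the special-case formulae for $\gamma$ emerge; a careful check using the labeling convention for boxes (the extra restriction imposed when $\eps=-$) is needed here, exactly as flagged in the discussion around \eqref{ev0}.

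For the inductive step from $m-1$ to $m$, I would write $s_{h,k}(y_1\otimes\cdots\otimes y_m) = \sum_{c\in\mathcal I_n} s_{h,c}(y_1\otimes\cdots\otimes y_{m-1})\, s_{c,k}(y_m)$ by \eqref{sijmult}, apply the derivation property $[z,ab]=[z,a]b+a[z,b]$ with $z=s_{i,j}(e_{p,q})$, use the $m=1$ case on $[z, s_{c,k}(y_m)]$ and the inductive hypothesis on $[z, s_{h,c}(y_1\otimes\cdots\otimes y_{m-1})]$, and then re-collapse the resulting sums over intermediate $\mathcal I_n$-indices back into $s_{\bullet,\bullet}$ of tensor products using \eqref{sijmult} again. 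One must check that the two ``boundary'' contributions — the $t=m$ term from the $m=1$ case acting on the last factor, and the telescoped terms from the hypothesis — assemble correctly into the stated sums $\sum_{t=1}^m$ over all insertion positions, and in particular that the $\gamma$-factor is the same in every term (it depends only on $i,j,p,q$, not on $t$ or $m$), which makes this step essentially formal once the sign in $\gamma$ is pinned down. I expect no difficulty here beyond careful index manipulation, so the whole lemma reduces to the single honest sign computation in the base case.
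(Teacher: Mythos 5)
Your proposal is correct and follows essentially the same route as the paper: the paper likewise computes $[f_{a,b},f_{c,d}]$ directly from $f_{a,b}=e_{a,b}-\eps^{\hat a+\hat b}e_{-b,-a}$, rewrites the result in terms of the maps $s_{\bullet,\bullet}$ while collecting the $\phi$-powers and identifying the residual $\eps$-factor with $\gamma$ via the box-labeling convention (exactly the point you flag about the $\eps=-$ convention near column $0$), and then extends from $m=1$ to general $m$ by induction using \eqref{sijmult}. The only difference is one of presentation: you make the Leibniz-rule induction explicit where the paper simply asserts it.
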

\begin{proof}
  
First note that for $a,b,c,d \in \mathcal{I}_{n l}$,
\begin{align*}
[f_{a,b}, f_{c,d}] 
&= [e_{a,b} - \eps^{\hat a + \hat b} e_{-b, -a}, e_{c,d} - \eps^{\hat c + \hat d} e_{-d, -c}] \\
&= \delta_{c,b} e_{a,d} - \delta_{b,-d} \eps^{\hat c + \hat d} e_{a,-c} -
   \delta_{-a,c} \eps^{\hat a + \hat b} e_{-b,d} + \delta_{a,d} \eps^{\hat b + \hat c} e_{-b,-c} \\
   &\quad
   - \delta_{a,d} e_{c,b} + \delta_{-c, a} \eps^{\hat c + \hat d} e_{-d, b}
   + \delta_{d,-b} \eps^{\hat a + \hat b} e_{c,-a} - \delta_{c, b} \eps^{\hat a + \hat b} e_{-d, -a} \\
&= \delta_{c,b} f_{a,d} - \delta_{a, d} f_{c,b} + \eps^{\hat a + \hat b} (-\delta_{c,-a} f_{-b,d} + \delta_{-b,d} f_{c,-a})
\end{align*}
Thus for $v, w \in \mathcal{I}_l$ and $a,b,c,d$ such that 
$
\row(a) = i, \col(a) = p,
\row(b) = j, \col(b) = q,
\row(c) = h, \col(c) = v,
\row(d) = k, \col(d) = w
$
we have that
\begin{align*}
  [s_{i,j}&(e_{p,q}), s_{h,k}(e_{v,w})] \\
  &=
[\phi^{\hat \imath \hat p + \hat \jmath \hat q} f_{a,b}, \phi^{\hat h \hat v + \hat k \hat w}f_{c,d}] \\
&=\phi^{\hat \imath \hat p + \hat \jmath \hat q+\hat h \hat v + \hat k \hat w}
 (  \delta_{c,b} f_{a,d} - \delta_{a, d} f_{c,b} + \eps^{\hat a + \hat b} (-\delta_{c,-a} f_{-b,d} + \delta_{-b,d} f_{c,-a})) \\
&=\phi^{\hat \imath \hat p + \hat \jmath \hat q+\hat h \hat v + \hat k \hat w}
 ( \phi^{\hat \imath \hat p + \hat k \hat w} s_{h,j}(1) s_{i,k}(e_{p,q} e_{v,w}) - 
    \phi^{\hat h \hat v + \hat \jmath \hat q} s_{h,j}(e_{v,w} e_{p,q}) s_{i,k}(1) \\
    &\quad + \eps^{\hat a + \hat b}(\phi^{\tilde \jmath \tilde q + \hat k \hat w} s_{h,-i}(1)s_{-j,k}(e_{-q,-p}e_{v,w})
    + \phi^{\hat h \hat v + \tilde \imath \tilde p} s_{h,-i}(e_{v,w} e_{-q,-p}) s_{-j,k}(1))) \\
   &=  s_{h,j}(1) s_{i,k}(e_{p,q} e_{v,w}) -  s_{h,j}(e_{v,w} e_{p,q}) s_{i,k}(1)  \\
   &\quad + \gamma (-s_{h,-i}(1)s_{-j,k}(e_{-q,-p}e_{v,w}) + s_{h,-i}(e_{v,w} e_{-q,-p}) s_{-j,k}(1) ),
\end{align*}
on noting that the $\eps$ term in $\gamma$ equals $\eps^{\hat a + \hat b}$ due to the labeling 
convention specified in the introduction.
Now the linearity 
of $s$ implies the lemma holds for $m=1$ and any $y_1 \in \mf{g}_l$,
and the lemma follows from induction on $m$.
\end{proof}

For $p,q \in \mathcal{I}_l$, let $\Omega_{p,q}(u)$ and $\bar\Omega_{p,q}(u)$ 
denote the square submatrices of $\Omega(u)$ and $\bar\Omega(u)$, respectively,
with rows and columns indexed by $\{p, p+2,\dots,q\}$.

\begin{Lemma} \label{calc_1}
    For each $i,j \in \mathcal{I}_n$ and for $q \in \mathcal{I}_l$ such that 
$q \ge 0$,
\begin{align*}
    \pr & \left( s_{i,j}   
     \left( \rdet
    \begin{pmatrix}
       e_{q+2,q} & e_{q+2,q+2} & e_{q+2,q+4} & \dots & e_{q+2,l-1} \\
       1 & u_{q+2} & e_{q+2,q+4} & \dots & e_{q+2,l-1} \\
       0 & 1 & u_{q+4} & \dots & e_{q+4,l-1} \\
       \vdots & \vdots & \vdots & \ddots & \vdots \\
      0 & 0 & 0 & \dots &  u_{l-1}
    \end{pmatrix}
    \right) \right) \\
    &= (u+\rho_{q+2}-n) s_{i,j}(\rdet \Omega_{q+4,l-1}(u)) \\
    &= (u+\rho_{q+2}-n) s_{i,j}(\rdet \bar\Omega_{q+4,l-1}(u)).
\end{align*}
\end{Lemma}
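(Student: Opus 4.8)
The plan is to expand the row determinant of the displayed $(l-1-q)/2 + 1$ by $(l-1-q)/2 + 1$ matrix along its first row, apply $s_{i,j}$ using the multiplicativity \eqref{sijmult} together with the formula for $[s_{i,j}(e_{p,q}),s_{h,k}(\cdots)]$ in Lemma \ref{pos_commuter}, and then push $\pr$ through. Write the displayed matrix as $\Theta(u)$. Its first row is $(e_{q+2,q}, e_{q+2,q+2}, e_{q+2,q+4}, \dots, e_{q+2,l-1})$ and the matrix obtained by deleting the first row and first column is exactly $\Omega_{q+4,l-1}(u)$. Laplace expansion along row $1$ in row order gives $\rdet \Theta(u) = e_{q+2,q}\,\rdet\Omega_{q+4,l-1}(u) + \sum_{t} \pm e_{q+2,q+2t}\,\rdet(\text{minor})$, where the minor in the $t$-th term is the one obtained by deleting row $1$ and column $q+2t$. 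Because the subdiagonal of $\Theta(u)$ consists of $1$'s, deleting column $q+2t$ forces the remaining rows $2,\dots$ to contribute their subdiagonal $1$'s in a predictable pattern, and the only surviving term of that minor is $1\cdot 1\cdots 1\cdot e_{q+2,q+2t}$-type — more precisely each such minor collapses to a single monomial, which upon applying $s_{i,j}$ and then $\pr$ will be analyzed below.

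The key observation is how $\pr\circ s_{i,j}$ acts: since $\pr$ kills the augmentation-shifted generators from $\mf m$, we have (as in the commented-out remark) $\pr(s_{i,j}(e_{q+2,q})) = s_{i,j}(1) = \delta_{i,j}$ for $q \ge 1$, $= -\phi\,\delta_{i,j}$ if $q=0$, while $\pr(s_{i,j}(e_{p,p})) = s_{i,j}(u_p - \rho_p - u)$-type contributions are handled through $\mu$ conventions — but here the cleaner route is to move $e_{q+2,q}$, which lies in $\mf m$, to the front and commute it past the rest using Lemma \ref{pos_commuter}. Concretely, write $s_{i,j}(\rdet\Theta(u))$ and observe that $\rdet\Theta(u) = (e_{q+2,q})\cdot(\text{stuff in rows }2,\dots) + (\text{lower terms})$; apply $s$, use \eqref{sijmult}, and then commute the factor $s_{\cdot,\cdot}(e_{q+2,q})$ through. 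After applying $\pr$, all genuinely-$\mf m$ pieces become scalars via \eqref{prchi}: the term with $e_{q+2,q}$ in the corner contributes $s_{i,j}(1)\cdot\rdet\Omega_{q+4,l-1}(u)$ shifted, and the off-diagonal $e_{q+2,q+2t}$-terms contribute the $-n$ and the $u+\rho_{q+2}$ pieces — the $-n$ arising precisely from the sum $\sum_{k\in\mathcal I_n} s_{i,k}(e_{q+2,q})s_{k,j}(\cdots) \mapsto$ a trace-like term of size $n$ when $\pr$ collapses it, and $u+\rho_{q+2}$ being the diagonal entry $u_{q+2}$ after the corner-$1$ is removed. Combining these yields $(u+\rho_{q+2}-n)\,s_{i,j}(\rdet\Omega_{q+4,l-1}(u))$.

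Finally, the two expressions $(u+\rho_{q+2}-n)s_{i,j}(\rdet\Omega_{q+4,l-1}(u))$ and $(u+\rho_{q+2}-n)s_{i,j}(\rdet\bar\Omega_{q+4,l-1}(u))$ agree because $q+4 > 0$ (as $q \ge 0$), so the submatrices $\Omega_{q+4,l-1}(u)$ and $\bar\Omega_{q+4,l-1}(u)$ do not involve the $(0,0)$ entry and hence coincide by \eqref{bar_omega}; this is immediate. The main obstacle is the careful bookkeeping of signs and of which minors in the Laplace expansion survive: one must verify that for each $t\ge 1$ the minor of $\Theta(u)$ obtained by striking row $1$ and column $q+2t$ row-determinant-expands to exactly the monomial needed so that, after $s_{i,j}$ and $\pr$, the cross terms assemble into the single scalar factor $(u+\rho_{q+2}-n)$ rather than leaving residual noncommutative tails. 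I expect this sign-and-survival analysis — essentially tracking the subdiagonal of $1$'s through the row-ordered Laplace expansion and matching it against the $\pr$-values in \eqref{prchi} — to be the delicate part, whereas the identification of the two right-hand sides is trivial.
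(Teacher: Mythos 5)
Your plan is essentially the paper's proof: replace the corner entry $e_{q+2,q}\in\mf{m}$ by its $\chi$-value $1$ at the cost of the commutator corrections supplied by Lemma \ref{pos_commuter}, extract the factor $(u+\rho_{q+2})$ by a row operation on the resulting matrix, obtain the $-n$ from the sum over $\mathcal I_n$ of those corrections (each commutator contributes $-s_{m,m}(e_{q+2,q})\,s_{i,j}(\rdet\Omega_{q+4,l-1}(u))$, and $\pr(s_{m,m}(e_{q+2,q}))=1$ for every $m$, whence the factor $n$), and finally note $\Omega_{q+4,l-1}(u)=\bar\Omega_{q+4,l-1}(u)$ because $q+4>0$. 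Two bookkeeping slips in your write-up are worth correcting, though neither affects the strategy. First, the matrix obtained by deleting the first row and first column of the displayed matrix is $\Omega_{q+2,l-1}(u)$, not $\Omega_{q+4,l-1}(u)$; the index $q+4$ only appears after the diagonal entry $u_{q+2}$ has been consumed, either by the commutator (which pairs $e_{q+2,q}$ with $u_{q+2}$) or by the row operation (which leaves only $-(u+\rho_{q+2})$ in the $(1,2)$ slot). Second, the value $-\phi\,\delta_{i,j}$ you quote for ``$q=0$'' actually belongs to the entry $e_{0,-2}$, i.e.\ to the case $p=q+2=0$, which means $q=-2$ and never occurs in this lemma; for every $q\ge 0$ one has $\pr(s_{f,g}(e_{q+2,q}))=\delta_{f,g}$ by \eqref{prchi}, which is what your actual computation uses, so the lemma's constant is the same for $q=0$ as for $q>0$.
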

\begin{proof}
By \eqref{prchi} for any 
$f,g \in \mathcal{I}_n, \pr(s_{f,g}(e_{q+2,q})) = 
\delta_{f,g} = s_{f,g}(1)$.
So
\begin{align}
    \notag
    \pr & \left( s_{i,j}
     \left( \rdet
    \begin{pmatrix}
       e_{q+2,q} & e_{q+2,q+2} & e_{q+2,q+4} & \dots & e_{q+2,l-1} \\
       1 & u_{q+2} & e_{q+2,q+4} & \dots & e_{q+2,l-1} \\
       0 & 1 & u_{q+4} & \dots & e_{q+4,l-1} \\
       \vdots & \vdots & \vdots & \ddots & \vdots \\
      0 & 0 & 0 & \dots &  u_{l-1}
    \end{pmatrix}
    \right) \right) \\ \notag
    &= s_{i,j}
     \left( \rdet
    \begin{pmatrix}
       1 & e_{q+2,q+2} & e_{q+2,q+4} & \dots & e_{q+2,l-1} \\
       1 & u_{q+2} & e_{q+2,q+4} & \dots & e_{q+2,l-1} \\
       0 & 1 & u_{q+4} & \dots & e_{q+4,l-1} \\
       \vdots & \vdots & \vdots & \ddots & \vdots \\
      0 & 0 & 0 & \dots &  u_{l-1}
    \end{pmatrix}
    \right) \\
    \label{l_3} & \quad +\sum_{m \in \mathcal{I}_n} \pr([s_{i,m}(e_{q+2,q}), 
       s_{m,j}(\rdet \Omega_{q+2,l-1}(u))]).
\end{align}
Since $u_{q+2} =  e_{q+2,q+2} + u + \rho_{q+2}$,
doing the obvious row operation gives that
\begin{align}
    \notag
    \rdet
    & \begin{pmatrix}
       1 & e_{q+2,q+2} & e_{q+2,q+4} & \dots & e_{q+2,l-1} \\
       1 & u_{q+2} & e_{q+2,q+4} & \dots & e_{q+2,l-1} \\
       0 & 1 & u_{q+4} & \dots & e_{q+4,l-1} \\
       \vdots & \vdots & \vdots & \ddots & \vdots \\
      0 & 0 & 0 & \dots &  u_{l-1}
    \end{pmatrix} \\
   \notag &= 
    \rdet
    \begin{pmatrix}
       0 & -(u+\rho_{q+2}) & 0 & \dots & 0 \\
       1 & u_{q+2} & e_{q+2,q+4} & \dots & e_{q+2,l-1} \\
       0 & 1 & u_{q+4} & \dots & e_{q+4,l-1} \\
       \vdots & \vdots & \vdots & \ddots & \vdots \\
      0 & 0 & 0 & \dots &  u_{l-1}
    \end{pmatrix} \\
    \label{l_2} &= (u+\rho_{q+2}) \rdet \Omega_{q+4,l-1}(u))
\end{align}
Next we apply Lemma \eqref{pos_commuter} to get that
\[
   [s_{i,m}(e_{q+2,q}), s_{m,j}(\rdet \Omega_{q+2,l-1}(u))]
   = -s_{m,m}(e_{q+2,q}) s_{i,j}(\rdet \Omega_{q+4,l-1}(u)).
\]
By \eqref{prchi} $\pr(s_{m,m}(e_{q+2,q})) = 1$,
so
\begin{equation} \label{l_1}
   \pr([s_{i,m}(e_{q+2,q}), s_{m,j}(\rdet \Omega_{q+2,l-1}(u))])
   = -s_{i,j}(\rdet \Omega_{q+4,l-1}(u)).
\end{equation}
Combining \eqref{l_2} and \eqref{l_1} into \eqref{l_3} gives that
\begin{align*}
    \pr & \left( s_{i,j}   
     \left( \rdet
    \begin{pmatrix}
       e_{q+2,q} & e_{q+2,q+2} & e_{q+2,q+4} & \dots & e_{q+2,l-1} \\
       1 & u_{q+2} & e_{q+2,q+4} & \dots & e_{q+2,l-1} \\
       0 & 1 & u_{q+4} & \dots & e_{q+4,l-1} \\
       \vdots & \vdots & \vdots & \ddots & \vdots \\
      0 & 0 & 0 & \dots &  u_{l-1}
    \end{pmatrix}
    \right) \right) \\
    &= (u+\rho_{q+2}) s_{i,j}(\rdet \Omega_{q+4,l-1}(u))
     - n  s_{i,j}(\rdet \Omega_{q+4,l-1}(u)) \\
    &= (u+\rho_{q+2}-n) s_{i,j}(\rdet \Omega_{q+4,l-1}(u)) \\
    &= (u+\rho_{q+2}-n) s_{i,j}(\rdet \bar\Omega_{q+4,l-1}(u))
\end{align*}
since 
$
  \Omega_{q+4,l-1}(u) =  \bar\Omega_{q+4,l-1}(u)
$
because $q \ge 0$ by assumption.
\end{proof}

\begin{Lemma} \label{calc_2}
For each $i,j,h,k \in \mathcal{I}_n$, for $q \in \mathcal{I}_l$ 
such that $q>0$,
and for $p \in \mathcal{I}_l$ such that $-q < p < q$,
\[
  \pr([s_{i,j}(e_{q+2,q}),s_{h,k}(\rdet\Omega_{p, l-1}(u))]) = 0,
\]
and
\[
  \pr([s_{i,j}(e_{q+2,q}),s_{h,k}(\rdet\bar\Omega_{p, l-1}(u))]) = 0,
\]
\end{Lemma}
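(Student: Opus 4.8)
The plan is to feed the commutator into Lemma~\ref{pos_commuter} and then argue that $\pr$ annihilates what comes out. Expand $\rdet\Omega_{p,l-1}(u)$ as a $\C[u]$-linear combination of honest tensor monomials $y_1\otimes\cdots\otimes y_m$ in the entries of $\Omega_{p,l-1}(u)$, splitting each diagonal entry as $u_s=(u+\rho_s)+e_{s,s}$, and apply Lemma~\ref{pos_commuter} to each monomial with the generator $e_{q+2,q}$. The first thing to record is the index arithmetic: the rows and columns of $\Omega_{p,l-1}(u)$ are labelled by $\{p,p+2,\dots,l-1\}$, and the hypotheses $-q<p<q$ and $q>0$ (together with $q+2\le l-1$, which is needed for $e_{q+2,q}$ to lie in $\mathfrak g_l$ at all) force this set to contain both $q$ and $q+2$ but neither $-q$ nor $-(q+2)$, since $-(q+2)<-q<p$. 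Hence in the output of Lemma~\ref{pos_commuter} the matrix unit $e_{q+2,q}$ can be multiplied nontrivially only into an entry of row $q$ (from the left) or of column $q+2$ (from the right), while the matrix unit $e_{-q,-(q+2)}$ carried by the ``$\gamma$-part'' can be produced only from the scalar summands $u+\rho_s$ of the diagonal entries.

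For the $\gamma$-free part I would read off the answer geometrically: summing the left multiplications by $e_{q+2,q}$ over all slots behaves like the row operation that replaces row $q+2$ of $\Omega_{p,l-1}(u)$ by a copy of row $q$, and summing the right multiplications behaves like the column operation that replaces column $q$ by a copy of column $q+2$; the single slot in which $e_{q+2,q}$ meets the diagonal entry $u_q$ of row $q$ is exactly the situation evaluated by Lemma~\ref{calc_1}. Since $q$ and $q+2$ are both genuine indices of the submatrix, and since applying $\pr$ turns (by~\eqref{prchi}) the constant subdiagonal entry $\Omega_{q+2,q}=1$ into the Kronecker-delta entry $s_{\cdot,\cdot}(1)$ and kills the strictly negative-degree entries, these operations produce row determinants of matrices with two proportional rows, respectively two proportional columns, which vanish. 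The cleanest way to make this rigorous---while keeping the signs under control, the signs being governed by the cycle structure of the underlying permutations as in the computation of~\eqref{fijr}---is induction on the number of rows of the submatrix: Laplace-expand $\rdet\Omega_{p,l-1}(u)$ along its first row, invoke the Leibniz rule for $[s_{i,j}(e_{q+2,q}),-]$ coming from Lemma~\ref{pos_commuter} and~\eqref{sijmult}, note that $s_{i,j}(e_{q+2,q})$ commutes with $s_{h,k}(u_p)$ and with $s_{h,k}(e_{p,c})$ except when $c=q+2$ (where the commutator is explicit), and apply the inductive hypothesis to the smaller submatrices together with Lemma~\ref{calc_1}. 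The $\gamma$-part is dispatched by the same kind of bookkeeping: the $e_{-q,-(q+2)}$-contributions assemble into row determinants of matrices carrying a single entry proportional to $e_{-q,-(q+2)}$ against an otherwise zero pair of rows (or columns), hence vanish; alternatively, the left- and right-multiplication $\gamma$-sums telescope against each other.

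The $\bar\Omega$ identity then follows from the $\Omega$ identity: $\bar\Omega_{p,l-1}(u)$ differs from $\Omega_{p,l-1}(u)$ only in the $(0,0)$ entry, and since $q>0$ the row and column labelled $0$---if they occur in the submatrix at all---are distinct from rows and columns $q$ and $q+2$, so that entry is never involved in any of the manipulations above.

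The step I expect to be the main obstacle is precisely this bookkeeping, for two reasons. First, $\pr$ does not distribute over products, so the ``proportional rows/columns'' cancellations must be set up inside a single $s_{h,k}(\,\cdot\,)$ at the level of $T(\mathfrak g_l)$, before $\pr$ is applied, rather than afterwards. Second, one has to rule out the $\gamma$-anomaly honestly rather than wave it away: even though $-q$ and $-(q+2)$ are not indices of $\Omega_{p,l-1}(u)$, the matrix unit $e_{-q,-(q+2)}$ genuinely is created by the scalar parts of the diagonal entries, and one must verify that all the terms it produces cancel.
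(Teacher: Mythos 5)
Your setup matches the paper's: apply Lemma~\ref{pos_commuter}, observe that the $\gamma$-part drops out, and reduce the rest to a left-multiplication piece and a right-multiplication piece. But the central claim of your argument --- that these two pieces each vanish because they are ``row determinants of matrices with two proportional rows, respectively two proportional columns'' --- is false, and this is where the actual content of the lemma lives. First, $\rdet$ of a matrix with two equal rows need not vanish when the entries do not commute (already $\rdet\left(\begin{smallmatrix} a & b\\ a& b\end{smallmatrix}\right)=[a,b]$). Second, and more importantly, neither piece is zero: writing the commutator as $A-B$ with $A$ the left-multiplication sum and $B$ the right-multiplication sum, one finds that each of $\pr(A)$ and $\pr(B)$ equals the \emph{nonzero} quantity $(u+\rho_q)\,s_{h,j}(\rdet\Omega_{p,q-2}(u))\,s_{i,k}(\rdet\Omega_{q+4,l-1}(u))$, and the lemma holds because these two evaluations agree and cancel in the difference. (You can check this on $l=4$, $q=1$, $p=-1$: the left-multiplication sum alone projects to $(u+\rho_1)\delta_{i,k}s_{h,j}(u_{-1})\neq 0$.) Getting these two evaluations to agree is exactly the delicate part: $\pr(A)$ requires Lemma~\ref{calc_1}, whose answer carries the correction $\rho_{q+2}-n=\rho_q$ precisely because $\pr$ does not commute with the naive row operation, while $\pr(B)$ is computed by a column operation after $\pr$ turns the corner entry $e_{q+2,q}$ into the scalar $1$. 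Your proposal, as written, replaces this matching computation with a vanishing claim that is simply not true, so the proof does not go through.

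A smaller point: your treatment of the $\gamma$-part is internally inconsistent. Your first paragraph gives the correct (and complete) reason it vanishes --- every entry of $\Omega_{p,l-1}(u)$ has row and column indices in $\{p,\dots,l-1\}$, and $-(q+2)<-q<p$, so all products $e_{-q,-(q+2)}y_t$ and $y_te_{-q,-(q+2)}$ are zero --- but your last paragraph then asserts that $e_{-q,-(q+2)}$ ``genuinely is created by the scalar parts of the diagonal entries'' and must be cancelled. It is not: scalars contribute nothing to a commutator, Lemma~\ref{pos_commuter} only ever multiplies $e_{-q,-(q+2)}$ against genuine Lie-algebra entries of the submatrix, and the $\gamma$-part is identically zero before $\pr$ is even applied. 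There is nothing to cancel there; the real work is entirely in the $A$ versus $B$ comparison above.
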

\begin{proof}

We shall prove the result for $\Omega(u)$, but note that an identical proof
holds for $\bar\Omega(u)$.  
We compute using Lemma \ref{pos_commuter}
to get that
\[
[s_{i,j}(e_{q+2,q}),s_{h,k}(\rdet\Omega_{p,l-1}(u))] = A - B,
\]
where
\begin{align*}
  A &= s_{h,j}(\rdet \Omega_{p, q-2}(u))
    s_{i,k}
    \left( \rdet
    \begin{pmatrix}
       e_{q+2,q} & e_{q+2,q+2} & \dots & e_{q+2,l-1} \\
       1 & u_{q+2} & \dots & e_{q+2,l-1} \\
       \vdots & \vdots & \ddots & \vdots \\
      0 & 0 & \dots &  u_{l-1}
    \end{pmatrix}
    \right), 
\end{align*}
and
\begin{align*}
  B &= s_{h,j}
    \left( \rdet
    \begin{pmatrix}
       u_{p} & \dots & e_{p, q} & e_{p,q} \\
       \vdots & \ddots & \vdots & \vdots \\
       0 & \dots & u_q & e_{q,q}\\
       0 & \dots & 1 & e_{q+2,q}
    \end{pmatrix}
    \right)
    s_{i,k}(\rdet \Omega_{q+4,l-1}(u)).
\end{align*}
We will be more explicit how to calculate $B$, $A$
is computed in a similar manner.
Let $\mathcal{S}(X)$ denote the symmetric group on a set $X$.
Let $M = \Omega_{p,l-1}(u)$.
By definition,
\[
 \rdet M = \sum_{\sigma \in \mathcal{S}(\{p, p+2, \dots, l-1\})} 
    M_{p, \sigma(p)}
    M_{p+2, \sigma(p+2)} \dots
    M_{l-1, \sigma(l-1)}.
\]
All of the monomials in $B$ come from the second sum in Lemma \ref{pos_commuter} (all the monomials A come from the
first sum in Lemma \ref{pos_commuter}, and the last two sums from that Lemma 
in the calculation of $[s_{i,j}(e_{q+2,q}),s_{h,k}(\rdet\Omega_{p,l-1}(u))]$ are zero).
Furthermore every term in this sum is zero except for those coming from $s_{h,k}$
applied to monomials in $\rdet M$ which contain $e_{v,q+2}$
for some $v \in \mathcal{I}_l, v \le q+2$.
Now since the only nonzero terms of $M$ below the diagonal are scalars occurring
immediately below the diagonal, if $\sigma \in \mathcal{S}(\{p, p+2, \dots, l-1\})$ contributes a nonzero term to 
the second sum of Lemma \ref{pos_commuter},
then 
$
\sigma \in \mathcal{S}(\{p, p+l, \dots, q+2\}) \times \mathcal{S}(\{q+4, q+6, \dots, l-1\}).
$
Thus the sum of the terms of $[s_{i,j}(e_{q+2,q}),s_{h,k}(\rdet\Omega_{p,l-1}(u))]$ which come from the second
sum in Lemma \ref{pos_commuter} is precisely $B$.

Since $\rho_{q+2} - n = \rho_q$, by Lemma \ref{calc_1}, 
\[
  \pr(A) = (u+\rho_q) s_{h,j}(\rdet \Omega_{p, q-2}(u)) 
     s_{i,k}(\rdet \Omega_{q+4,l-1}(u)).
\]
By \eqref{prchi} for any 
$f,g \in \mathcal{I}_n, \pr_{\chi}(s_{f,g}(e_{q+2,q})) = \delta_{f,g} = s_{f,g}(1).$
So the obvious column operation gives that
\begin{align*}
  \pr(B) &= 
   s_{h,j}
    \left( \rdet
    \begin{pmatrix}
       u_{p} & \dots & e_{p, q} & 0 \\
       \vdots & \ddots & \vdots & \vdots \\
       0 & \dots & u_q & -(u+\rho_q) \\
       0 & \dots & 1 & 0
    \end{pmatrix}
    \right) 
    s_{i,k}(\rdet \Omega_{q+4,l-1}(u)) \\
     &=
(u+\rho_q) s_{h,j}(\rdet \Omega_{p, q-2}(u)) 
     s_{i,k}(\rdet \Omega_{q+4,l-1}(u)).
\end{align*}
The lemma now follows.
\end{proof}

\begin{Lemma} \label{allcase}
For each $i,j,h,k \in \mathcal{I}_n$ and for 
$q \in \mathcal{I}_l$ so that $q > 0$,
\[
\pr([s_{i,j}(e_{q+2,q}),s_{h,k}(\rdet\Omega(u))]) = 0
\]
 and
\[
\pr([s_{i,j}(e_{q+2,q}),s_{h,k}(\rdet\bar\Omega(u))]) = 0.
\]
\end{Lemma}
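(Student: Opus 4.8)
The plan is to carry out, for the full matrix $\Omega(u)=\Omega_{1-l,l-1}(u)$, essentially the same computation that proves Lemma~\ref{calc_2}. The one genuinely new feature is that $\Omega(u)$ possesses a column indexed by $-q$ and a row indexed by $-q-2$, so the last two (``$\gamma$-twisted'') sums in Lemma~\ref{pos_commuter}---which were identically zero in the proof of Lemma~\ref{calc_2} because those columns and rows were absent from $\Omega_{p,l-1}(u)$ whenever $p>-q$---now contribute, and the job is to show they cancel among themselves after $\pr$ is applied.

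First I would apply Lemma~\ref{pos_commuter} with $e_{p,q}$ taken to be $e_{q+2,q}$ to each monomial in the upper-Hessenberg expansion of $\rdet\Omega(u)$. Its first two sums assemble, exactly as in the proof of Lemma~\ref{calc_2} but with $p$ replaced everywhere by $1-l$, into an expression $A-B$ with $A=s_{h,j}(\rdet\Omega_{1-l,q-2}(u))\,s_{i,k}(\rdet M)$ and $B=s_{h,j}(\rdet M')\,s_{i,k}(\rdet\Omega_{q+4,l-1}(u))$, where $M$ and $M'$ are the augmented matrices from Lemmas~\ref{calc_1} and~\ref{calc_2}. Since Lemma~\ref{calc_1} requires only $q\ge 0$, it still yields $\pr(A)=(u+\rho_{q+2}-n)\,s_{h,j}(\rdet\Omega_{1-l,q-2}(u))\,s_{i,k}(\rdet\Omega_{q+4,l-1}(u))=(u+\rho_q)\,s_{h,j}(\rdet\Omega_{1-l,q-2}(u))\,s_{i,k}(\rdet\Omega_{q+4,l-1}(u))$, and the same column operation used in Lemma~\ref{calc_2} (which needs only $\pr(s_{f,g}(e_{q+2,q}))=\delta_{f,g}$) gives the identical value for $\pr(B)$; hence $\pr(A-B)=0$. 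The last two sums of Lemma~\ref{pos_commuter} contribute $\gamma(B'-A')$, and the key observation is that $A'$ and $B'$ are built from the below-diagonal entry $e_{-q,-q-2}$ of $\Omega(u)$ in exactly the same way that $A$ and $B$ were built from $e_{q+2,q}$, but with $(i,j)$ in the roles of $(-j,-i)$ and with the relevant corner now at column $-q-2$; concretely $A'=s_{h,-i}(\rdet\Omega_{1-l,-q-4}(u))\,s_{-j,k}(\rdet N)$ and $B'=s_{h,-i}(\rdet N')\,s_{-j,k}(\rdet\Omega_{-q+2,l-1}(u))$ for analogous augmented matrices $N,N'$.

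The heart of the proof is therefore to verify $\pr(A')=\pr(B')$. This is the mirror image of the $A$-$B$ computation: one peels $e_{-q,-q-2}$ out of the lower-left corner of the relevant block, which by the same reasoning as Lemma~\ref{calc_1} (applied to a below-diagonal corner in the negative region, so with ``$q$'' replaced by $-q-2$) produces the scalar $u+\rho_{-q}-n=u+\rho_{-q-2}$; and in place of $\pr(s_{f,g}(e_{q+2,q}))=\delta_{f,g}$ one uses $\pr(s_{f,g}(e_{-q,-q-2}))=-\delta_{f,g}$, which follows from \eqref{prchi} together with the identity $f_{a,b}=-\eps^{\hat a+\hat b}f_{-b,-a}$ and the labeling convention for boxes. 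The step I expect to be the main obstacle is the resulting sign bookkeeping: one must keep track of the power of $\phi$ hidden in $\gamma$ and in the definition \eqref{sij} of $s_{i,j}$, of the minus signs coming from the $\eps^{\hat a+\hat b}$ in the negative region, and of the special labeling rule imposed when $\eps=-$, and check that everything combines so that $\gamma\,\pr(B'-A')=0$; this is where the relation between $\phi$ and $\eps$ dictated by the parity of $l$ is used. Granting this, $\pr([s_{i,j}(e_{q+2,q}),s_{h,k}(\rdet\Omega(u))])=\pr(A-B)+\gamma\,\pr(B'-A')=0$, and the assertion for $\bar\Omega(u)$ follows by an identical argument.
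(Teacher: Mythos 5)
Your decomposition is exactly the paper's: Lemma~\ref{pos_commuter} splits the commutator as $A-B$ plus a $\gamma$-twisted pair (your $A'$, $B'$; the paper calls them $C$ and $D$), $\pr(A-B)=0$ follows from Lemma~\ref{calc_1} and the column operation just as in Lemma~\ref{calc_2}, and the whole content of the lemma is the cancellation of the twisted pair under $\pr$. Your identification of the two twisted terms as $s_{h,-i}(\rdet\Omega_{1-l,-q-4}(u))\,s_{-j,k}(\rdet N)$ and $s_{h,-i}(\rdet N')\,s_{-j,k}(\rdet\Omega_{-q+2,l-1}(u))$ is also correct, as is the target identity $\pr(A')=\pr(B')$, the common value being $-(u+\rho_{-q-2})\,s_{h,-i}(\rdet\Omega_{1-l,-q-4}(u))\,s_{-j,k}(\rdet\Omega_{-q+2,l-1}(u))$.

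The step that does not go through as described is the claim that this is ``the mirror image of the $A$--$B$ computation,'' obtainable ``by the same reasoning as Lemma~\ref{calc_1}.'' That reasoning uses the hypothesis $q\ge 0$ essentially: it guarantees that the nested commutator $[s_{i,m}(e_{q+2,q}),s_{m,j}(\rdet\Omega_{q+2,l-1}(u))]$, which supplies the $-n$ correction when the corner is peeled off, receives no contribution from the two twisted sums of Lemma~\ref{pos_commuter}. In the mirrored situation the corner is $e_{-q,-q-2}$, whose twisted partner under $(p,q)\mapsto(-q,-p)$ is $e_{q+2,q}$ again, and the blocks $\Omega_{-q,l-1}(u)$ and $\Omega_{-q+2,l-1}(u)$ straddle zero, so they do contain row $q$ and column $q+2$. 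Hence the nested commutator $[s_{-j,m}(e_{-q,-q-2}),s_{m,k}(\rdet\Omega_{-q,l-1}(u))]$ acquires two additional twisted terms (the quantities the paper names $A'$ and $B'$ inside its computation of $\pr(C)$), and these must be shown to cancel under $\pr$ by a further round of Lemma~\ref{calc_1} plus the column operation; likewise, evaluating your $B'$ entrywise requires $\pr([s_{m,-i}(e_{-q,-q-2}),s_{-j,k}(\rdet\Omega_{-q+2,l-1}(u))])=0$, which is where the paper appeals to Lemma~\ref{calc_2} after converting $e_{-q,-q-2}$ back into a multiple of $e_{q+2,q}$. No new idea is needed to dispose of these extra terms --- they die by the same two tools you already cite --- but your sketch does not anticipate their appearance, and without them the computation of $\pr(A')$ is incomplete in a structural way, not merely a matter of sign bookkeeping.
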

\begin{proof}
We shall prove the result for $\Omega(u)$, but note that an identical proof
holds for $\bar\Omega(u)$.  
We compute using Lemma \ref{pos_commuter}
to get that
\[
[s_{i,j}(e_{q+2,q}),s_{h,k}(\rdet\Omega(u))] = 
A - B + \phi^{\tilde \imath+\tilde \jmath}(-C +D),
\] 
where 
\begin{align*}
  A &= s_{h,j}(\rdet \Omega_{1-l, q-2}(u))
    s_{i,k} 
    \left( \rdet
    \begin{pmatrix}
       e_{q+2,q} & e_{q+2,q+2} & \dots & e_{q+2,l-1} \\
       1 & u_{q+2} & \dots & e_{q+2,l-1} \\
       \vdots & \vdots & \ddots & \vdots \\
      0 & 0 & \dots &  u_{l-1}
    \end{pmatrix}
    \right), \\
  B &= s_{h,j}
    \left( \rdet
    \begin{pmatrix}
       u_{1-l} & \dots & e_{1-l, q} & e_{1-l,q} \\
       \vdots & \ddots & \vdots & \vdots \\
       0 & \dots & u_{q} & e_{q,q}\\
       0 & \dots & 1 & e_{q+2,q}
    \end{pmatrix}
    \right) 
    s_{i,k}(\rdet \Omega_{q+4,l-1}(u)), \\
  C &= s_{h,-i}(\rdet \Omega_{1-l,-q-4}(u))  
    s_{-j,k} 
    \left( \rdet
    \begin{pmatrix}
       e_{-q,-q-2} & e_{-q,-q} & 
         \dots & e_{-q,l-1} \\
       -1 & u_{-q} & \dots & e_{-q,l-1} \\
       \vdots & \vdots & \ddots & \vdots \\
      0 & 0 & \dots &  u_{l-1}
    \end{pmatrix}
    \right),
\end{align*}
and
\begin{align*}
  D &= s_{h,-i} \left( \rdet
    \begin{pmatrix}
       u_{1-l} & \dots & e_{1-l, -q-2} & 
          e_{1-l,-q-2} \\
       \vdots & \ddots & \vdots & \vdots \\
       0 & \dots & u_{-q-2} 
           & e_{-q-2,-q-2} \\
       0 & \dots & -1 & e_{-q,-q-2}
    \end{pmatrix}
    \right)  
    s_{-j,k}(\rdet \Omega_{-q+2,l-1}(u)).
\end{align*}
By Lemma \ref{calc_1},
\[
  \pr(A) = (u+\rho_{q}) s_{h,j}(\rdet \Omega_{1-l,q-2}(u))
      s_{i,k}(\rdet  \Omega_{q+4,l-1}(u)).
\]
The obvious column operation gives that
\begin{align*}
  \pr(B)  
   &= s_{h,j}
    \left( \rdet
    \begin{pmatrix}
       u_{1-l} & \dots & e_{1-l, q} & 0 \\
       \vdots & & \vdots & \vdots \\
       0 & \dots & u_{q} & -(u+\rho_q) \\
       0 & \dots & 1 & 0
    \end{pmatrix}
    \right)  
    s_{i,k}(\rdet \Omega_{q+4,l-1}(u)) \\
    &=
(u+\rho_{q}) s_{h,j}(\rdet \Omega_{1-l,q-2}(u))
      s_{i,k}(\rdet  \Omega_{q+4,l-1}(u)).
\end{align*}
Hence $\pr (A-B) = 0$.

Since by \eqref{prchi} 
$\pr(s_{f,g}(e_{-q,-q-2})) = -\delta_{f,g}
= s_{f,g}(-1)$ for any $f,g \in \mathcal{I}_n$,
we have that
\begin{align} 
  \notag \pr(C) &=  
   s_{h,-i}(\rdet \Omega_{1-l,-q-4}(u))  
   s_{-j,k} 
    \left( \rdet
    \begin{pmatrix}
       -1 & e_{-q,-q} & \dots & e_{-q,l-1} \\
       -1 & u_{-q} & \dots & e_{-q,l-1} \\
       \vdots & \vdots & \ddots & \vdots \\
      0 & 0 & \dots &  u_{l-1}
    \end{pmatrix}
    \right) \\
  \label{l_6} 
     &\quad + 
      \sum_{m \in \mathcal{I}_n} 
   s_{h,-i}(\rdet \Omega_{1-l,-q-4}(u)) 
   \pr([s_{-j,m}(e_{-q,-q-2}),
   s_{m,k}(\rdet \Omega_{-q,l-1}(u))]).
\end{align}
The obvious row operation gives that
\begin{align}  \notag
 s_{-j,k} & \left( \rdet
    \begin{pmatrix}
       -1 & e_{-q,-q} & \dots & e_{-q,l-1} \\
       -1 & u_{-q} & \dots & e_{-q,l-1} \\
       \vdots & \vdots & \ddots & \vdots \\
      0 & 0 & \dots &  u_{l-1}
    \end{pmatrix}
    \right) \\
  \notag
  &= 
 s_{-j,k} \left( \rdet
    \begin{pmatrix}
       0 & -(u + \rho_{-q})  & \dots & 0 \\
       -1 & u_{-q} & \dots & e_{-q,l-1} \\
       \vdots & \vdots & \ddots & \vdots \\
      0 & 0 & \dots &  u_{l-1}
    \end{pmatrix}
    \right) \\
\label{l_4}
  &= 
  -(u+\rho_{-q}) s_{-j,k}(\rdet \Omega_{-q+2,l-1}(u)).
\end{align}
Next we compute using Lemma
\ref{pos_commuter}
to get that
\begin{align*}
  [s_{-j,m}&(e_{-q,-q-2}),
   s_{m,k}(\rdet \Omega_{-q,l-1}(u))] \\
  &= 
   - s_{m,m}(e_{-q,-q-2}) s_{-j,k}(\rdet \Omega_{-q+2,l-1}(u))  
  - A' + B',
\end{align*}
where
\begin{align*}
  A' &=  \phi^{\tilde \jmath+\hat m} s_{m,j}(\rdet \Omega_{-q, q-2}(u)) 
        s_{-m,k}
    \left( \rdet
    \begin{pmatrix}
       e_{q+2,q} &  e_{q+2,q+2} & 
          \dots &  e_{q+2,l-1} \\
       1 & u_{q+2} & \dots & e_{q+2,l-1} \\
       \vdots & \vdots & \ddots & \vdots \\
      0 & 0 & \dots &  u_{l-1}
    \end{pmatrix}
    \right)
\end{align*}
and
\[
  B' = 
    \phi^{\tilde \jmath+\hat m}
    s_{m,j}
    \left( \rdet
    \begin{pmatrix}
       u_{-q} & \dots & e_{-q, q} &  e_{-q,q} \\
       \vdots & \ddots & \vdots & \vdots \\
       0 & \dots & u_{q} &  e_{q,q}\\
       0 & \dots & 1 & e_{q+2,q}
    \end{pmatrix}
    \right) 
    s_{-m,k}(\rdet \Omega_{q+4,l-1}(u)).
\]
By Lemma \ref{calc_1} 
\[
\pr(A') =  \phi^{\tilde \jmath+\hat m} (u+\rho_q) 
  s_{m,j}(\rdet \Omega_{-q, q-2}(u)) 
  s_{-m,k}(\rdet \Omega_{q+4,l-1}(u)).
\]
The usual column operation gives that
\begin{align*}
  \pr(B') &= 
      \phi^{\tilde \jmath+\hat m}
    s_{m,j}
    \left( \rdet
    \begin{pmatrix}
       u_{-q} & \dots & e_{-q, q} &  0 \\
       \vdots & \ddots & \vdots & \vdots \\
       0 & \dots & u_{q} &  -(u+\rho_q)\\
       0 & \dots & 1 & 0
    \end{pmatrix}
    \right) 
    s_{-m,k}(\rdet \Omega_{q+4,l-1}(u)) \\
  &=  \phi^{\tilde \jmath+\hat m} (u + \rho_q) s_{m,j}(\rdet \Omega_{-q, q-2}(u))
       s_{-m,k}(\rdet \Omega_{q+4,l-1}(u)).
\end{align*}
Thus $\pr(-A' + B') = 0$.

By Lemma \ref{calc_2}, we have that
$[s_{m,m}(e_{-q,-q-2}), s_{-j,k}(\rdet \Omega_{-q+2,l-1}(u))] = 0$.
Now since 
$\pr(s_{m,m}(e_{-q,-q-2})) = -1$,
we get that
\[
\pr(s_{m,m}(e_{-q,-q-2}) s_{-j,k}(\rdet \Omega_{-q+2,l-1}(u)))
  = - s_{-j,k}(\rdet \Omega_{-q+2,l-1}(u)).
\]
So
\begin{equation} \label{l_5}
  \pr(  [s_{-j,m}(e_{-q,-q-2})
   s_{m,k}(\rdet \Omega_{-q,l-1}(u))] 
  =  s_{-j,k}(\rdet \Omega_{-q+2,l-1}(u)).
\end{equation}
By combining \eqref{l_4} and \eqref{l_5} into \eqref{l_6}
we get that
\begin{align*}
  \pr(C) &= -(u+\rho_{-q})
  s_{h,-i}(\rdet \Omega_{1-l,-q-4}(u))  
  s_{-j,k}(\rdet  \Omega_{-q+2,l-1}(u)) \\
  &\quad + n
  s_{h,-i}(\rdet \Omega_{1-l,-q-4}(u))  
  s_{-j,k}(\rdet  \Omega_{-q+2,l-1}(u)) \\
  &=
   - (u+\rho_{-q-2})
  s_{h,-i}(\rdet \Omega_{1-l,-q-4}(u))  
  s_{-j,k}(\rdet  \Omega_{-q+2,l-1}(u)).
\end{align*}

Finally, we need to apply $\pr$ to $D$. 
By \eqref{prchi} $\pr(s_{f,g}(e_{-q,-q-2})) = -\delta_{f,g} = s_{f,g}(-1)$ 
for any $f,g \in \mathcal{I}_n$.
By Lemma \ref{calc_2}
$s_{m,-i}(e_{-q,-q-2})$ commutes with 
$s_{-j,k}(\rdet \Omega_{-q+2,l-1}(u))$.
So the usual column operation gives that
\begin{align*}
\pr(D) &= s_{h,-i}
    \left( \rdet
    \begin{pmatrix}
       u_{1-l} & \dots & e_{1-l, -q-2} 
         & 0 \\
       \vdots & \ddots & \vdots & \vdots \\
       0 & \dots & u_{-q-2} & -(u+\rho_{-q-2}) \\
       0 & \dots & -1 & -1
    \end{pmatrix}
    \right)  \\
    &\quad \quad \times
     s_{-j,k}(\rdet \Omega_{-q+2,l-1}(u)) \\
  &= -(u+\rho_{-q-2}) s_{h,-i}(\rdet \Omega_{1-l, -q-4}(u))
     s_{-j,k}(\rdet \Omega_{-q+2,l-1}(u)).
\end{align*}
Thus $\pr(-C+D) = 0$.
\end{proof}

\begin{Lemma} \label{evencase}
Suppose that $l$ is even.
For each $i,j,h,k \in \mathcal{I}_n$
\[
\pr([s_{i,j}(e_{1,-1}),s_{h,k}(\rdet\Omega(u))]) = 0.
\]
\end{Lemma}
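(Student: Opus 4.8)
The plan is to mirror the proof of Lemma~\ref{allcase}, of which this is the remaining ``crossover'' case: when $l$ is even, $e_{1,-1}$ is the one generator of $\mf m$ listed in \eqref{thegens} (the value $q=-1$) not already handled there, so this lemma is exactly what is needed to finish the verification that $\pr([x,s_{h,k}(\rdet\Omega(u))])=0$ for all $x\in\mf m$ in the even case. First I would apply Lemma~\ref{pos_commuter} with $(p,q)=(1,-1)$. The feature peculiar to this case is that $e_{-q,-p}=e_{1,-1}=e_{p,q}$, so all four insertion terms in that lemma involve the single matrix unit $e_{1,-1}$; also one computes $\gamma=\phi^{\tilde \imath+\hat \jmath}\eps$ here (using $\hat1=0$, $\tilde1=1$, $\widehat{-1}=1$, $\widetilde{-1}=0$). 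Expanding $[s_{i,j}(e_{1,-1}),s_{h,k}(\rdet\Omega(u))]$ and using, as throughout $\S4$, that the only below-diagonal entries of $\Omega(u)$ are the $\pm1$'s on the first subdiagonal (so that the permutations contributing to $\rdet\Omega(u)$ factor through intervals of $\mathcal I_l$), the commutator rearranges into the form $A-B+\gamma(-C+D)$ directly parallel to the corresponding display in the proof of Lemma~\ref{allcase}: $A$ collects the terms in which $e_{1,-1}$ multiplies a monomial of $\rdet\Omega(u)$ on the left (forcing the row-$(-1)$ factor to be used), $B$ those in which it enters on the right (forcing column $1$ to be hit), and $C,D$ the mirror-image terms coming from the $\gamma$-part of Lemma~\ref{pos_commuter}.

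Next I would evaluate $\pr A,\pr B,\pr C,\pr D$ one at a time by exactly the moves used in Lemmas~\ref{calc_1}, \ref{calc_2} and \ref{allcase}: invoke \eqref{prchi} to replace the instances of $s(e_{1,-1})$ — a subdiagonal entry, since $1=-1+2>0$, so $\pr(s_{f,g}(e_{1,-1}))=\delta_{f,g}$ — by Kronecker deltas, then perform one row (resp.\ column) operation to pull a scalar of the shape $\pm(u+\rho_1)$ or $\pm(u+\rho_{-1})$ out of a bordered determinant, leaving the determinant of a genuine $\Omega$-submatrix. Lemma~\ref{calc_1} supplies the values of the bordered pieces, and the auxiliary commutators that appear inside $\pr C$ and $\pr D$ now vanish automatically for degree reasons (they pair $s(e_{1,-1})$, which lives in columns $\{1,-1\}$ of the pyramid, with entries of $\Omega_{3,l-1}(u)$, which live in columns $\ge 3$), so Lemma~\ref{calc_2} is not even needed. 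Collecting the four evaluations one finds that $\pr(A-B+\gamma(-C+D))=0$, which is the assertion.

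The main obstacle is the sign bookkeeping across the crossover. In Lemma~\ref{allcase} the generator $e_{q+2,q}$ (with $q>0$) sat entirely on the positive side and all the relevant corner entries of $\Omega(u)$ were $+1$, whereas $e_{1,-1}$ straddles the middle: the entry $\Omega(u)_{1,-1}=+1$ sits directly above the entry $\Omega(u)_{-1,-3}=-1$. Correspondingly the naive $\rho$-arithmetic no longer holds on the nose — one has $\rho_1-n=\rho_{-1}-\eps$ rather than $\rho_1-n=\rho_{-1}$ — so the scalar coming out of the row operation in the $C$-computation differs by $\eps$ from what the $D$-computation produces. The point to check carefully is that this $\eps$-discrepancy is exactly absorbed by the extra factor $\eps$ in $\gamma=\phi^{\tilde \imath+\hat \jmath}\eps$ together with the relation $\phi=-\eps$ forced by $l$ even and the sign flip $\pr(s(e_{1,-1}))=+\delta$ (against $\pr(s(e_{-1,-3}))=-\delta$ on the negative side), so that the four pieces still cancel; this is the only place the hypothesis that $l$ is even genuinely enters.
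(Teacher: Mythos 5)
Your setup (the decomposition $A-B+\gamma(-C+D)$ via Lemma~\ref{pos_commuter}, the value $\gamma=\phi^{\tilde \imath+\hat \jmath}\eps$, and the observation that $e_{-q,-p}=e_{1,-1}=e_{p,q}$) matches the paper, but the mechanism you propose for closing the computation is wrong, and the error is precisely at the crossover point you flag. The terms $A$ and $C$ are both of the ``bordered determinant with corner $e_{1,-1}$'' shape, so evaluating $\pr(A)$ and $\pr(C)$ requires replacing the corner $e_{1,-1}$ by $1$ and accounting for the correction commutators $\sum_{m}\pr([s_{\cdot,m}(e_{1,-1}),\,s_{m,\cdot}(\rdet\Omega_{1,l-1}(u))])$. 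These pair $s(e_{1,-1})$ with $\Omega_{1,l-1}(u)$ (columns $\geq 1$), not with $\Omega_{3,l-1}(u)$, and they do \emph{not} vanish: besides the expected $-n\,s_{\cdot,\cdot}(\rdet\Omega_{3,l-1}(u))$ from $y_t e_{p,q}$ hitting $u_1$, the $\gamma$-part of Lemma~\ref{pos_commuter} now also contributes, because $e_{-q,-p}=e_{1,-1}$ again meets column $1$; summing the resulting $\phi^{\tilde\imath+\hat m}\eps\,\delta_{m,-i}$ term over $m$ yields an extra $+\eps$. This is exactly what turns $(u+\rho_1-n)$ into $(u+\rho_1-n+\eps)=(u+\rho_{-1})$, so that $\pr(A)=\pr(B)$ and $\pr(C)=\pr(D)$ \emph{separately}. (It is also why you cannot simply quote Lemma~\ref{calc_1} for these bordered pieces: that lemma assumes $q\geq 0$ and would give the scalar $u+\rho_{q+2}-n=u+\rho_1-n$, which is off by $\eps$ here.)

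Your proposed alternative --- that the discrepancy $\rho_1-n=\rho_{-1}-\eps$ is ``absorbed by the extra factor $\eps$ in $\gamma$'' --- cannot work: $\gamma$ multiplies $-C+D$ as a whole, so it can never repair a mismatch \emph{between} $\pr(C)$ and $\pr(D)$. Concretely, if one had $\pr(C)=(u+\rho_{-1}-\eps)X_2$ and $\pr(D)=(u+\rho_{-1})X_2$, then $\gamma(-\pr(C)+\pr(D))=\gamma\eps X_2\neq 0$, and the leftover $-\eps X_1$ from $A-B$ (with $X_1$ involving the index pair $(h,j),(i,k)$ rather than $(h,-i),(-j,k)$) could not cancel against it. The hypothesis that $l$ is even enters only through $\eps=-\phi$ in evaluating $\pr(s_{f,g}(e_{1,-1}))=\delta_{f,g}$ via \eqref{prchi}; the $\eps$-bookkeeping you worry about is resolved entirely inside $\pr(A)$ and $\pr(C)$ by the self-paired correction term described above, not by the global prefactor.
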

\begin{proof}
Since $l$ is even, $\eps = - \phi$, so in all cases by
\eqref{prchi} we have that for all $f,g \in \mathcal{I}_n$
\begin{equation} \label {l_3.5}
  \pr(s_{f,g}(e_{1,-1})) = \delta_{f,g} = s_{f,g}(1).
\end{equation}

We compute using Lemma \ref{pos_commuter}
to get that
\[
[s_{i,j}(e_{1,-1}),s_{h,k}(\rdet\Omega(u))] = A - B + 
   \phi^{\tilde \imath+\hat \jmath} \eps (-C + D),
\]
where
\begin{align*}
  A = &s_{h,j}(\rdet \Omega_{1-l, -3}(u)) 
    s_{i,k} 
    \left( \rdet
    \begin{pmatrix}
       e_{1,-1} & e_{1,1} & \dots & e_{1,l-1} \\
       1 & u_{1} & \dots & e_{1,l-1} \\
       \vdots & \vdots & \ddots & \vdots \\
      0 & 0 & \dots &  u_{l-1}
    \end{pmatrix}
    \right),
\end{align*}
\begin{align*}
  B = &s_{h,j}
    \left( \rdet
    \begin{pmatrix}
       u_{1-l} & \dots & e_{1-l, -1} & e_{1-l,-1} \\
       \vdots & \ddots & \vdots & \vdots \\
       0 & \dots & u_{-1} & e_{-1,-1} \\
       0 & \dots & 1 & e_{1,-1} \\
    \end{pmatrix}
    \right) 
    s_{i,k}(\rdet \Omega_{3,l-1}(u)),
\end{align*}
\begin{align*}
  C = &s_{h,-i}(\rdet \Omega_{1-l,-3}(u)) 
    s_{-j,k} 
    \left( \rdet
    \begin{pmatrix}
       e_{1,-1} & e_{1,1} & \dots & e_{1,l-1} \\
       1 & u_{1} & \dots & e_{1,l-1} \\
       \vdots & \vdots & \ddots & \vdots \\
      0 & 0 & \dots &  u_{l-1}
    \end{pmatrix}
    \right),
\end{align*}
and
\begin{align*}
  D = &s_{h,-i}
    \left( \rdet
    \begin{pmatrix}
       u_{1-l} & \dots &
             e_{1-l, -1} & e_{1-l,-1} \\
       \vdots & \ddots & \vdots & \vdots \\
       0 & \dots & u_{-1} &  e_{-1,-1} \\
       0 & \dots & 1 & e_{1,-1} 
    \end{pmatrix}
    \right) 
    s_{-j,k}(\rdet \Omega_{3,l-1}(u)).
\end{align*}
Consider A first.
Note that
\begin{align}
\notag
\pr &\left( s_{i,k} 
    \left( \rdet
    \begin{pmatrix}
       e_{1,-1} & e_{1,1} & \dots & e_{1,l-1} \\
       1 & u_{1}  & \dots & e_{1,l-1} \\
       \vdots & \vdots & \ddots & \vdots \\
      0 & 0 & \dots &  u_{l-1}
    \end{pmatrix}
    \right)
    \right)  \\
\notag
   &=  
s_{i,k} 
    \left( \rdet
    \begin{pmatrix}
       1 & e_{1,1} & \dots & e_{1,l-1} \\
       1 & u_{1}  & \dots & e_{1,l-1} \\
       \vdots & \vdots & \ddots & \vdots \\
      0 & 0 & \dots &  u_{l-1} 
    \end{pmatrix}
    \right)   \\
\label{A1}
   &\quad +  \sum_{m \in \mathcal{I}_n}
      \pr([s_{i,m}(e_{1,-1}),
   s_{m,k}(\rdet \Omega_{1,l-1}(u))]).
\end{align}
The obvious row operation gives that
\begin{align} \notag
s_{i,k} 
    \left( \rdet
    \begin{pmatrix}
       1 & e_{1,1} & \dots & e_{1,l-1} \\
       1 & u_{1}  & \dots & e_{1,l-1} \\
       \vdots & \vdots & \ddots & \vdots \\
      0 & 0 & \dots &  u_{l-1} 
    \end{pmatrix}
    \right) 
 &= s_{i,k}    \left( \rdet
    \begin{pmatrix}
       0 & -(u+\rho_1) & 0 & \dots & 0 \\
       1 & u_{1}  & e_{1,3} & \dots & e_{1,l-1} \\
       \vdots & \vdots & \vdots & \ddots & \vdots \\
      0 & 0 & 0& \dots &  u_{l-1}
    \end{pmatrix}
    \right)  \\
   \label{l_3.6} &= 
       (u+\rho_1) s_{i,k}(\rdet \Omega_{3,l-1}(u)).
\end{align}
Next consider the terms 
$\pr([s_{i,m}(e_{1,-1}), s_{m,k}(\rdet \Omega_{1,l-1}(u))])$
from \eqref{A1}.
We calculate using Lemma \ref{pos_commuter} to get that
\begin{align*}
[s_{i,m}&(e_{1,-1}), s_{m,k}(\rdet \Omega_{1,l-1}(u))]\\
 &= -s_{m,m}(e_{1,-1}) s_{i,k}(\rdet \Omega_{3,l-1}(u)) 
  + \phi^{\tilde \imath + \hat m} \eps s_{m,-i}(e_{1,-1}) s_{-m,k}(\rdet \Omega_{3,l-1}(u)).
\end{align*}
So
\begin{align} \notag
  \pr&([s_{i,m}(e_{1,-1}), s_{m,k}(\rdet \Omega_{1,l-1}(u))]) \\
    \label{l_3.7} 
      &= 
      -s_{i,k}(\rdet \Omega_{3,l-1}(u))
          +  \phi^{\tilde \imath + \hat m} \eps \delta_{m,-i}  
              s_{-m,k}(\rdet \Omega_{3,l-1}(u)).
\end{align}
So by combining \eqref{l_3.7} and \eqref{l_3.6} in \eqref{A1} we
we get that
\begin{align}  \notag
  \pr(A) &=  
       (u+\rho_1) 
          s_{h,j}( \rdet \Omega_{1-l, -3}(u))
          s_{i,k}(\rdet \Omega_{3,l-1}(u)) \\
   \notag
   &\quad -n 
          s_{h,j}( \rdet \Omega_{1-l, -3}(u))
   s_{i,k}(\rdet \Omega_{3,l-1}(u))  \\
   \notag
    &\quad + \eps 
          s_{h,j}( \rdet \Omega_{1-l, -3}(u))
   s_{i,k}(\rdet \Omega_{3,l-1}(u))  \\
\label{l_3.8}
   &=
       (u+\rho_{-1}) 
          s_{h,j}( \rdet \Omega_{1-l, -3}(u))
          s_{i,k}(\rdet \Omega_{3,l-1}(u)),
\end{align}
since $\rho_1 -n + \eps = \rho_{-1}$. 

Next we consider $B$.
The usual column operation gives that
\begin{align} \notag
  \pr(B)  
   &= s_{h,j}
    \left( \rdet
    \begin{pmatrix}
       u_{1-l} & \dots & e_{1-l, -1} & e_{1-l,-1} \\
       \vdots & \ddots & \vdots & \vdots \\
       0 & \dots & u_{-1} & e_{-1,-1} \\
       0 & \dots & 1 & 1 \\
    \end{pmatrix}
    \right) 
    s_{i,k}(\rdet \Omega_{3,l-1}(u)) \\
   \notag 
   &= s_{h,j}
    \left( \rdet
    \begin{pmatrix}
       u_{1-l} & \dots & e_{1-l, -1} & 0 \\
       \vdots & \ddots & \vdots & \vdots \\
       0 & \dots & e_{-1, -1} & -(u+\rho_{-1}) \\
       0 & \dots & 1 & 0 \\
    \end{pmatrix}
    \right) 
  s_{i,k}(\rdet \Omega_{3,l-1}(u)) \\
  \label{l_3.9}   &=  
        (u+\rho_{-1}) s_{h,j}(\rdet \Omega_{1-l,-3}(u))
        s_{i,k}(\rdet \Omega_{3,l-1}(u)).
\end{align}
So by \eqref{l_3.8} and \eqref{l_3.9}, $\pr(A-B) = 0$.

Next consider $C$.  Since $C$ is nearly identical to $A$, 
an argument nearly identical to that
used for $A$ shows that
\[
  \pr(C) = (u+\rho_{-1}) s_{h,-i}( \rdet \Omega_{1-l, -3}(u))
       s_{-j,k}(\rdet \Omega_{3,l-1}(u)).
\]
Since $D$ is nearly identical to $B$, an argument nearly identical to that
used for $B$ shows that
\[
  \pr(D) = (u+\rho_{-1}) 
           s_{h,-i}( \rdet \Omega_{1-l, -3}(u))
          s_{-j,k}(\rdet \Omega_{3,l-1}(u)).
\]
So $\pr(-C+D) = 0$.
\end{proof}

\begin{Lemma} \label{oddcase}
Suppose that $l$ is odd.
For $i,j,h,k \in \mathcal{I}_n$,
\begin{align*}
   \pr&([s_{i,j}(e_{2,0}),s_{h,k}(\rdet\Omega(u))])  \\
  &= 
  \phi /2 s_{h,j}(\rdet \Omega_{1-l,-2}(u))
            s_{i,k}(\rdet \Omega_{4,l-1}(u)) \\
   &\quad +\phi^{\tilde \imath + \hat \jmath+1}  /2
      s_{h,-i}(\rdet \Omega_{1-l, -4}(u))
     s_{-j,k}(\rdet \Omega_{4,l-1}(u)) \\
   &\quad - \phi^{\tilde \imath + \hat \jmath}
         /2 s_{h,-i}(\rdet \Omega_{1-l, -4}(u))
     s_{-j,k}(\rdet \Omega_{2,l-1}(u))
\end{align*}
and
\begin{align*}
\pr&([s_{i,j}(e_{2,0}),s_{h,k}(\rdet\bar\Omega(u))])  \\
  &= 
  (u+\phi/2) s_{h,j}(\rdet \Omega_{1-l,-2}(u))
            s_{i,k}( \rdet \Omega_{4,l-1}(u)) \\
   &\quad + \phi^{\tilde \imath + \hat \jmath}
       (u+\phi/2)  s_{h,-i}(\rdet \Omega_{1-l, -4}(u))
     s_{-j,k}(\rdet \Omega_{4,l-1}(u)) \\
   &\quad - \phi^{\tilde \imath + \hat \jmath+1}
     (u+\phi/2)  s_{h,-i}(\rdet \Omega_{1-l, -4}(u))
     s_{-j,k}(\rdet \Omega_{2,l-1}(u)).
\end{align*}
\end{Lemma}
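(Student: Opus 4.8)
The plan is to imitate the proofs of Lemmas~\ref{allcase} and \ref{evencase}, the difference being that here the cancellations are incomplete and one must keep track of the terms that survive. First I would apply Lemma~\ref{pos_commuter} with $e_{p,q}=e_{2,0}$. Since $p=2\neq 0$ and $q=0$ this falls under the third case of \eqref{gamma}, so $\gamma=\phi^{\hat\imath\hat p+\tilde\imath\tilde p}\eps^{\hat p+\hat\jmath}=\phi^{\tilde\imath}\eps^{\hat\jmath}$, which equals $\phi^{\tilde\imath+\hat\jmath}$ because $\phi=\eps$ when $l$ is odd. Exactly as in Lemma~\ref{allcase}, the four sums of Lemma~\ref{pos_commuter} collect into
\[
[s_{i,j}(e_{2,0}),s_{h,k}(\rdet\Omega(u))]=A-B+\phi^{\tilde\imath+\hat\jmath}(-C+D),
\]
where $A$ (resp.\ $B$) is produced by pushing $e_{2,0}$ forward through the row $0$ (resp.\ backward through the column $2$) of $\Omega(u)$, and $C$, $D$ are the analogous expressions built from $e_{-q,-p}=e_{0,-2}$ acting on the row $-2$ and the column $0$. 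The same decomposition holds verbatim for $\bar\Omega(u)$, and the outer factors $\rdet\Omega_{1-l,-2}(u)$, $\rdet\Omega_{1-l,-4}(u)$, $\rdet\Omega_{2,l-1}(u)$ and $\rdet\Omega_{4,l-1}(u)$ that will occur coincide for $\Omega$ and $\bar\Omega$ because none of those sub-blocks meets the index $0$.

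Second, I would evaluate $\pr$ of each of $A,B,C,D$ in the same way as in Lemma~\ref{allcase}: $\pr(A)$ by Lemma~\ref{calc_1} applied with its parameter equal to $0$, $\pr(B)$ by the obvious column operation inside the row-determinant, and $\pr(C)$, $\pr(D)$ by one further application of Lemma~\ref{pos_commuter} together with Lemma~\ref{calc_1}; in place of Lemma~\ref{calc_2} one only needs the trivial remark that $e_{0,-2}$ has empty overlap with the index set $\{2,4,\dots,l-1\}$ of $\Omega_{2,l-1}(u)$, so the relevant inner commutators vanish for free. The essential point is that the scalar identities that forced total cancellation in Lemmas~\ref{allcase} and \ref{evencase} now break down: $\rho_2-n=-\phi/2$, which differs from $\rho_0=0$ by $\phi/2$, and $\rho_0-n=-n$, which differs from $\rho_{-2}=-n+\phi/2$ by $\phi/2$. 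Hence $\pr(A-B)$ is not zero: Lemma~\ref{calc_1} gives $\pr(A)$ the scalar factor $u+\rho_2-n$ while the column operation gives $\pr(B)$ the factor $u+\rho_0$, so their difference is a $\pm\frac{\phi}{2}$-multiple of $s_{h,j}(\rdet\Omega_{1-l,-2}(u))\,s_{i,k}(\rdet\Omega_{4,l-1}(u))$, which is the first term on the right-hand side. In the same way the second application of Lemma~\ref{pos_commuter} inside $\pr(C)$ produces, after another use of Lemma~\ref{calc_1}, the terms involving $\rdet\Omega_{2,l-1}(u)$ and $\rdet\Omega_{4,l-1}(u)$, and the two residuals involving $s_{h,-i}(\rdet\Omega_{1-l,-4}(u))$ in the statement survive precisely because of the discrepancy $\rho_0-n\neq\rho_{-2}$.

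For the $\bar\Omega$-identity the only change is that the entry at position $(0,0)$, equal to $u_0=u+e_{0,0}$ in $\Omega(u)$, becomes $e_{0,0}$ in $\bar\Omega(u)$; this entry enters the matrices of $B$, $C$ and $D$ but not that of $A$. Following the scalar part $u$ of $u_0$ --- which in the $\Omega$ computation is removed by the column operations against the shifted copies --- through the analogous steps shows it is no longer removed, so each surviving coefficient $\phi/2$ is replaced by $u+\phi/2$; this yields the second displayed formula. I expect the one genuinely delicate part to be the sign and $\phi$-power bookkeeping --- reconciling the factor $\gamma$, the substitution $\eps\mapsto\phi$ (legitimate only when $l$ is odd), and the signs coming out of the row- and column-operations --- so that the three residual terms emerge with exactly the coefficients $\phi/2$, $\phi^{\tilde\imath+\hat\jmath+1}/2$ and $-\phi^{\tilde\imath+\hat\jmath}/2$ (and likewise for $\bar\Omega$); everything else is a line-by-line adaptation of the computations already carried out for Lemmas~\ref{allcase} and \ref{evencase}.
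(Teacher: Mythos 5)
Your overall strategy is exactly the paper's: decompose $[s_{i,j}(e_{2,0}),s_{h,k}(\rdet\Omega(u))]$ via Lemma~\ref{pos_commuter} into $A-B+\phi^{\tilde\imath+\hat\jmath}(-C+D)$ (with the correct $\gamma$), evaluate $\pr(A)$ by Lemma~\ref{calc_1} and $\pr(B)$ by a column operation, and trace the surviving terms to the failure of the scalar identities $\rho_2-n=\rho_0$ and $\rho_0-n=\rho_{-2}$. However, there is a concrete false step that would corrupt the final coefficients: the ``trivial remark'' that the inner commutators of $s_{\cdot,\cdot}(e_{0,-2})$ with $s_{\cdot,\cdot}(\rdet\Omega_{2,l-1}(u))$ vanish. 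When Lemma~\ref{pos_commuter} is applied with $e_{p,q}=e_{0,-2}$, the $\gamma$-sums involve $e_{-q,-p}=e_{2,0}$, and right multiplication by $e_{2,0}$ \emph{does} meet the entries $e_{v,2}$ and $u_2$ of $\Omega_{2,l-1}(u)$. Concretely one finds
\begin{equation*}
\pr([s_{m,-i}(e_{0,-2}),\,s_{-j,k}(\rdet\Omega_{2,l-1}(u))])
=\phi^{\hat m+1+\tilde\imath}\,\delta_{j,m}\,s_{i,k}(\rdet\Omega_{4,l-1}(u))\neq 0,
\end{equation*}
and similarly $s_{m,m}(e_{0,-2})$ fails to commute with $s_{-j,k}(\rdet\Omega_{2,l-1}(u))$.

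These nonvanishing commutators inject correction terms proportional to $s(\rdet\Omega_{4,l-1}(u))$ into both $\pr(C)$ and $\pr(D)$, and they are not optional bookkeeping: the one displayed above contributes $\phi\, s_{h,j}(\rdet\Omega_{1-l,-2}(u))\,s_{i,k}(\rdet\Omega_{4,l-1}(u))$ to $\phi^{\tilde\imath+\hat\jmath}\pr(D)$, without which your first coefficient would come out as $-\phi/2$ (what $\pr(A-B)$ alone gives) instead of the stated $+\phi/2$, and as $u-\phi/2$ instead of $u+\phi/2$ in the $\bar\Omega$ identity; the analogous term inside $\pr(C)$ is likewise needed to produce the $\phi^{\tilde\imath+\hat\jmath+1}/2\,\cdot\,s_{-j,k}(\rdet\Omega_{4,l-1}(u))$ summand. (A smaller slip: the $(0,0)$ entry does not occur in the matrix defining $D$, so in fact $D=\bar D$, contrary to your claim that it enters $B$, $C$ \emph{and} $D$.) So the skeleton of your argument matches the paper, but as planned the computation would not reproduce the stated right-hand sides; you must carry out the extra applications of Lemma~\ref{pos_commuter} to these ``inner'' commutators rather than discard them.
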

\begin{proof}
Since $l$ is odd, $\eps = \phi$.
We compute using Lemma \ref{pos_commuter}
to get that
\[
[s_{i,j}(e_{2,0}),s_{h,k}(\rdet\Omega(u))] 
  = A - B + \phi^{\tilde \imath + \hat \jmath} (-C + D),
\]
and
\[
[s_{i,j}(e_{2,0}),s_{h,k}(\rdet\bar\Omega(u))] = 
    \bar A - \bar B +  \phi^{\tilde \imath+ \hat \jmath} (- \bar C + \bar D),
\]
where 
\begin{align*}
  A = \bar A = &s_{h,j}(\rdet \Omega_{1-l, -2}(u)) 
    s_{i,k} 
    \left( \rdet
    \begin{pmatrix}
       e_{2,0} & e_{2,2} & \dots & e_{2,l-1} \\
       1 & u_{2} & \dots & e_{2,l-1} \\
       \vdots & \vdots & \ddots & \vdots \\
      0 & 0 & \dots &  u_{l-1}
    \end{pmatrix}
    \right),
\end{align*}
\begin{align*}
  B = &s_{h,j}
    \left( \rdet
    \begin{pmatrix}
       u_{1-l} & \dots & e_{1-l, 0} & e_{1-l,0} \\
       \vdots & \ddots & \vdots & \vdots \\
       0 & \dots & e_{0,0} +u & e_{0,0} \\
       0 & \dots & 1 & e_{2,0}
    \end{pmatrix}
    \right) 
    s_{i,k}(\rdet \Omega_{4,l-1}(u)),
\end{align*}
\begin{align*}
  \bar B = &s_{h,j}
    \left( \rdet
    \begin{pmatrix}
       u_{1-l} & \dots &  e_{1-l, 0} & e_{1-l,0} \\
       \vdots & \ddots & \vdots & \vdots \\
       0 & \dots & e_{0,0} & e_{0,0} \\
       0 & \dots & 1 & e_{2,0}
    \end{pmatrix}
    \right) 
    s_{i,k}(\rdet \Omega_{4,l-1}(u)),
\end{align*}
\begin{align*}
  C &= s_{h,-i}(\rdet \Omega_{1-l,-4}(u))  
    s_{-j,k} 
    \left( \rdet
    \begin{pmatrix}
       e_{0,-2} & e_{0,0} & \dots & e_{0,l-1} \\
       - \phi & e_{0,0} +u & \dots & e_{0,l-1} \\
       \vdots & \vdots & \ddots & \vdots \\
      0 & 0 & \dots &  u_{l-1}
    \end{pmatrix}
    \right),
\end{align*}
\begin{align*}
  \bar C &= s_{h,-i}(\rdet \Omega_{1-l,-4}(u))  
    s_{-j,k} 
    \left( \rdet
    \begin{pmatrix}
        e_{0,-2} & e_{0,0} & \dots &  e_{0,l-1} \\
       -\phi & e_{0,0} & \dots & e_{0,l-1} \\
       \vdots & \vdots & \ddots & \vdots \\
      0 & 0 & \dots &  u_{l-1}
    \end{pmatrix}
    \right),
\end{align*}
and
\begin{align*}
  D = \bar D &= s_{h,-i}
    \left( \rdet
    \begin{pmatrix}
       u_{1-l}& \dots & e_{1-l, -2} & e_{1-l,-2} \\
       \vdots & & \vdots & \vdots \\
       0 & \dots & u_{-2} & e_{-2,-2} \\
       0 & \dots & -\phi &  e_{0,-2}
    \end{pmatrix}
    \right)  
    s_{-j,k}(\rdet \Omega_{2,l-1}(u)).
\end{align*}

By Lemma \ref{calc_1}
\begin{align*}
 \pr(A) &= \pr(\bar A) = 
   (u+ \rho_2-n ) s_{h,j}(\rdet \Omega_{1-l, -2}(u))
   s_{i,k} (\rdet \Omega_{4,l-1}(u)) \\
   &= (u - \phi /2) s_{h,j}(\rdet \Omega_{1-l, -2}(u))
   s_{i,k} (\rdet \Omega_{4,l-1}(u)).
\end{align*}
By \eqref{prchi} for any $f,g \in \mathcal{I}_n$, 
$\pr(s_{f,g}(e_{2,0})) = \delta_{f,g} = s_{f,g}(1)$.
So the obvious column operation gives that
\begin{align*}
  \pr(B) &= s_{h,j}
    \left( \rdet
    \begin{pmatrix}
       u_{1-l} & \dots & e_{1-l, 0} & 0 \\
       \vdots & \ddots & \vdots & \vdots \\
       0 & \dots & e_{0,0} +u & -u \\
       0 & \dots & 1 & 0 
    \end{pmatrix}
    \right) 
    s_{i,k}(\rdet \Omega_{4,l-1}(u)) \\
 &=
u s_{h,j}(\rdet \Omega_{1-l,-2}(u)) 
         s_{i,k}(\rdet \Omega_{4, l-1}(u))
\end{align*}
and
\[
  \pr(\bar B)  = 0.
\]
So 
\begin{equation} \label{l_6.10}
  \pr(A-B) = -\phi /2  s_{h,j}(\rdet \Omega_{1-l, -2}(u))
   s_{i,k} (\rdet \Omega_{4,l-1}(u)),
\end{equation}
and
\begin{equation} \label{l_6.11}
  \pr(\bar A-\bar B) = (u - \phi /2)  s_{h,j}(\rdet \Omega_{1-l, -2}(u))
   s_{i,k} (\rdet \Omega_{4,l-1}(u)).
\end{equation}

Next we consider $\pr(C)$.
Since $\eps = \phi$, in all cases we have by \eqref{prchi} that
for any $f,g \in \mathcal{I}_n$,
$\pr(s_{f,g}(e_{0,-2}))  = -\phi \delta_{f,g} 
= s_{f,g}(-\phi)$. 
So we have that
\begin{align} \notag
  \pr(C) &=  
     s_{h,-i}(\rdet \Omega_{1-l,-4}(u))
    s_{-j,k}
    \left( \rdet
    \begin{pmatrix}
       -\phi  & e_{0,0} & \dots & e_{0,l-1} \\
       -\phi  & e_{0,0} + u & \dots & e_{0,l-1} \\
       \vdots & \vdots & \ddots & \vdots \\
      0 & 0 & \dots &  u_{l-1}
    \end{pmatrix}
    \right) \\
    \label{l_6.12} &\quad + 
         \sum_{m \in \mathcal{I}_n} 
         s_{h,-i}(\rdet \Omega_{1-l,-4}(u))
           \pr([s_{-j,m}(e_{0,-2}), 
                 s_{m,k} (\rdet \Omega_{0,l-1}(u))]).
\end{align}
The obvious row operation gives that
\begin{align} \notag 
  s_{-j,k}
    &\left( \rdet
    \begin{pmatrix}
       -\phi  & e_{0,0} & \dots & e_{0,l-1} \\
       -\phi  & e_{0,0} + u & \dots & e_{0,l-1} \\
       \vdots & \vdots & \ddots & \vdots \\
      0 & 0 & \dots &  u_{l-1}
    \end{pmatrix}
    \right)  \\
  \notag &=
  s_{-j,k}
    \left( \rdet
    \begin{pmatrix}
       0 & -u & \dots & 0  \\
       -\phi  & e_{0,0} + u & \dots & e_{0,l-1} \\
       \vdots & \vdots & \ddots & \vdots \\
      0 & 0 & \dots &  u_{l-1}
    \end{pmatrix}
    \right) \\
    \label{l_6.18} &=  -\phi u s_{-j,k}(\rdet \Omega_{2,l-1}(u)).
\end{align}
Next we consider the terms  
$[s_{-j,m}(e_{0,-2}), s_{m,k} (\rdet \Omega_{0,l-1}(u))]$
from \eqref{l_6.12}.
By applying Lemma \ref{pos_commuter}, we compute that
\begin{align} \notag
[s_{-j,m}&(e_{0,-2}), s_{m,k} (\rdet \Omega_{0,l-1}(u))] \\
\notag &= - s_{m,m}(e_{0,-2}) s_{-j,k} (\rdet \Omega_{2,l-1}(u)) \\
  \notag &\quad - \phi^{\tilde \jmath+1+\hat m} 
       \delta_{m,j} s_{-m,k}
    \left( \rdet
        \begin{pmatrix}
       e_{2,0} & e_{2,2} & \dots & e_{2,l-1} \\
       1 & u_{2} & \dots & e_{2,l-1} \\
       \vdots & \vdots & \ddots & \vdots \\
      0 & 0 & \dots &  u_{l-1}
    \end{pmatrix}
    \right) \\
   \label{l_6.13} &\quad + \phi^{\tilde \jmath+1+ \hat m} s_{m,j}
   \left( \rdet
     \begin{pmatrix}
       e_{0,0} + u & e_{0,0} \\
       1 & e_{2,0}
     \end{pmatrix}
    \right)
    s_{-m,k} (\rdet \Omega_{4,l-1}(u)).
\end{align}
We need to apply $\pr$ to each term of this expression.
First we use Lemma \ref{pos_commuter} again to get that
\begin{align} \notag 
\pr( &s_{m,m}(e_{0,-2}) s_{-j,k} (\rdet \Omega_{2,l-1}(u))) \\
  \notag &= -\phi s_{-j,k} (\rdet \Omega_{2,l-1}(u))
   + \pr([ s_{m,m}(e_{0,-2}), s_{-j,k} (\rdet \Omega_{2,l-1}(u))]) \\
  \notag &= -\phi s_{-j,k} (\rdet \Omega_{2,l-1}(u))
    + \phi^{\hat {m}+1+ \hat m} 
      \pr(s_{-j,-m}(e_{2,0}) s_{-m, k}(\rdet \Omega_{4,l-1}(u))) \\
  \label{l_6.14} &= -\phi s_{-j,k} (\rdet \Omega_{2,l-1}(u))
    + \phi 
     \delta_{j,m}  s_{-m, k}(\rdet \Omega_{4,l-1}(u)).
\end{align}
Next by applying Lemma \ref{calc_1}, we have that
\begin{align} \notag 
 \pr \left( s_{-m,k}
    \left( \rdet
        \begin{pmatrix}
       e_{2,0} & e_{2,2} & \dots & e_{2,l-1} \\
       1 & u_{2} & \dots & e_{2,l-1} \\
       \vdots & \vdots & \ddots & \vdots \\
      0 & 0 & \dots &  u_{l-1}
    \end{pmatrix}
    \right)
    \right)
   &= (u+\rho_2 - n) s_{-m,k}(\rdet \Omega_{4,l-1}(u)) \\
   \label{l_6.15} &= (u-\phi /2) s_{-m,k}(\rdet \Omega_{4,l-1}(u)).
\end{align}
Next note that
\begin{align} \notag
  \pr &\left( s_{m,j}
  \left( \rdet
     \begin{pmatrix}
       e_{0,0} + u & e_{0,0} \\
       1 & e_{2,0}
     \end{pmatrix}
    \right)
    s_{-m,k} (\rdet \Omega_{4,l-1}(u))
    \right) \\
   \label{l_6.16} &=
  u \delta_{m,j} s_{-m,k} (\rdet \Omega_{4,l-1}(u)).
\end{align}
So by combining \eqref{l_6.14}, \eqref{l_6.15}, and \eqref{l_6.16} in
\eqref{l_6.13} we get that
\begin{align} \notag
\pr(&[s_{-j,m}(e_{0,-2}), 
              s_{m,k} (\rdet \Omega_{0,l-1}(u))]) \\
  \notag &= \phi s_{-j,k} (\rdet \Omega_{2,l-1}(u))
     -  \phi
         \delta_{j,m} s_{-m, k}(\rdet \Omega_{4,l-1}(u)) \\
     \notag &\quad - \phi^{\tilde \jmath + 1+\hat m} \delta_{m,j}  
         (u-\phi/2) s_{-m,k}(\rdet \Omega_{4,l-1}(u)) \\
     \label{l_6.17} &\quad +   \phi^{\tilde \jmath+1+\hat m}
         u \delta_{m,j} s_{-m,k} (\rdet \Omega_{4,l-1}(u)).
\end{align}
So by combining \eqref{l_6.18} and \eqref{l_6.17} in \eqref{l_6.12} we get that
\begin{align} \notag
  \pr(C) &= 
    - \phi u 
      s_{h,-i}(\rdet \Omega_{1-l,-4}(u))
      s_{-j,k}(\rdet \Omega_{2,l-1}(u)) \\
    \notag &\quad + \phi n
      s_{h,-i}(\rdet \Omega_{1-l,-4}(u))
      s_{-j,k} (\rdet \Omega_{2,l-1}(u)) \\
    \notag &\quad - \phi
      s_{h,-i}(\rdet \Omega_{1-l,-4}(u))
      s_{-j,k}(\rdet \Omega_{4,l-1}(u)) \\
    \notag &\quad - \phi^{\tilde \jmath + 1+\hat \jmath} (u-\phi/2)
      s_{h,-i}(\rdet \Omega_{1-l,-4}(u))
       s_{-j,k}(\rdet \Omega_{4,l-1}(u)) \\
    \notag &\quad + \phi^{\tilde \jmath+1+\hat \jmath} u
      s_{h,-i}(\rdet \Omega_{1-l,-4}(u))
       s_{-j,k}(\rdet \Omega_{4,l-1}(u)) \\
   \notag &=
   -\phi (u-n)
      s_{h,-i}(\rdet \Omega_{1-l,-4}(u))
      s_{-j,k}(\rdet \Omega_{2,l-1}(u)) \\
   \label{l_6.19} &\quad  -\phi /2  
      s_{h,-i}(\rdet \Omega_{1-l,-4}(u))
      s_{-j,k}(\rdet \Omega_{4,l-1}(u)).
\end{align}
For the last equality we use that $\phi^{\tilde \jmath +\hat \jmath} = \phi$,
since $j$ cannot be zero if $\phi = -1$.

A very similar calculation shows that
\begin{align} \notag
  \pr(\bar C) &= 
     \phi  n
      s_{h,-i}(\rdet \Omega_{1-l,-4}(u))
      s_{-j,k} (\rdet \Omega_{2,l-1}(u)) \\
    \label{l_6.24} &\quad  -(u+\phi/2)
      s_{h,-i}(\rdet \Omega_{1-l,-4}(u))
       s_{-j,k}(\rdet \Omega_{4,l-1}(u)).
\end{align}

Finally we must calculate $\pr(D)$.  Note that
\begin{align} \notag
  \pr(D) &= \pr(\bar D)   \\
    \notag &= s_{h,-i} \left( \rdet
    \begin{pmatrix}
       u_{1-l}& \dots & e_{1-l, -2} & e_{1-l,-2} \\
       \vdots & \ddots & \vdots & \vdots \\
       0 & \dots & u_{-2} &  e_{-2,-2} \\
       0 & \dots & -\phi & -\phi
    \end{pmatrix}
    \right)  
    s_{-j,k}(\rdet \Omega_{2,l-1}(u)) \\
    \label{l_6.21} &\quad +
    \sum_{m \in \mathcal{I}_n} 
    s_{h,m} ( \rdet \Omega_{1-l, -2}(u)) 
       \pr([s_{m,-i}(e_{0,-2}), s_{-j,k}(\rdet \Omega_{2,l-1}(u))]).
\end{align}
The obvious column operation gives that
\begin{align}
    \notag s_{h,-i} &
    \left( \rdet
    \begin{pmatrix}
       u_{1-l}& \dots & e_{1-l, -2} & 
              e_{1-l,-2} \\
       \vdots & \ddots & \vdots & \vdots \\
       0 & \dots & u_{-2} & e_{-2,-2} \\
       0 & \dots & -\phi & -\phi
    \end{pmatrix}
    \right)  \\
   \notag &= 
    s_{h,-i}
    \left( \rdet
    \begin{pmatrix}
       u_{1-l}& \dots & e_{1-l, -2} & 0 \\
       \vdots & \ddots & \vdots & \vdots \\
       0 & \dots & u_{-2} & -(u+\rho_{-2}) \\
       0 & \dots & -\phi & 0
    \end{pmatrix}
    \right)  \\
    \label{l_6.20} &=
       -\phi (u+\rho_{-2})  s_{h,-i}(\rdet \Omega_{1-l, -4}(u)).
\end{align}
Next we consider the terms 
$\pr([s_{m,-i}(e_{0,-2}), s_{-j,k}(\rdet \Omega_{2,l-1}(u))])$
from \eqref{l_6.21}.
We compute using Lemma \ref{pos_commuter} to get that 
\begin{align} \notag
  \pr([s_{m,-i}(e_{0,-2}), s_{-j,k}(\rdet \Omega_{2,l-1}(u))])  
  &=
  \phi^{\hat m+1+\tilde \imath} \pr(s_{-j, -m}(e_{2,0})
    s_{i,k}(\rdet \Omega_{4,l-1}(u))) \\
  \label{l_6.22} &=  \phi^{\hat m+1+\tilde \imath} 
        \delta_{j,m} s_{i,k}(\rdet \Omega_{4,l-1}(u)).
\end{align}
So by combining \eqref{l_6.20} and \eqref{l_6.22} in \eqref{l_6.21}
we have that
\begin{align} \notag
  \pr(D) = \pr(\bar D)
   &= - \phi 
          (u+\rho_{-2})  s_{h,-i}(\rdet \Omega_{1-l, -4}(u))
     s_{-j,k}(\rdet \Omega_{2,l-1}(u)) \\
   \label{l_6.23} &\quad + \phi^{\hat \jmath+1+\tilde \imath}
            s_{h,j}(\rdet \Omega_{1-l,-2}(u)) 
            s_{i,k}((\rdet \Omega_{4,l-1}(u)).
\end{align}

So by \eqref{l_6.10}, \eqref{l_6.19}, and \eqref{l_6.23} we have that
\begin{align*}
\pr(A-B+\phi^{\tilde \imath+\hat \jmath} (-C+ D)) 
   &=
  \phi /2 s_{h,j}(\rdet \Omega_{1-l,-2}(u))
            s_{i,k}(\rdet \Omega_{4,l-1}(u)) \\
   &\quad +\phi^{\tilde \imath + \hat \jmath+1} /2  
      s_{h,-i}(\rdet \Omega_{1-l, -4}(u))
     s_{-j,k}(\rdet \Omega_{4,l-1}(u)) \\
   &\quad - \phi^{\tilde \imath +\hat \jmath} 
         /2 s_{h,-i}(\rdet \Omega_{1-l, -4}(u))
     s_{-j,k}(\rdet \Omega_{2,l-1}(u)).
\end{align*}

By \eqref{l_6.11}, \eqref{l_6.24}, and \eqref{l_6.23} we have that
\begin{align*}
\pr(\bar A-\bar B+&\phi^{\tilde \imath+\hat \jmath}(-\bar C+\bar D))\\ &=
  (u+\phi/2) s_{h,j}(\rdet \Omega_{1-l,-2}(u))
            s_{i,k}(\rdet \Omega_{4,l-1}(u)) \\
   &\quad + \phi^{\tilde \imath + \hat \jmath}
       (u+\phi/2)  s_{h,-i}(\rdet \Omega_{1-l, -4}(u))
     s_{-j,k}(\rdet \Omega_{4,l-1}(u)) \\
   &\quad - \phi^{\tilde \imath+ \hat \jmath+1} 
     (u+\phi/2)  s_{h,-i}(\rdet \Omega_{1-l, -4}(u))
     s_{-j,k}(\rdet \Omega_{2,l-1}(u)).
\end{align*}
\end{proof}

Now we can prove Theorem~\ref{containedinW}.
We need to show that the equation (\ref{toprove}) holds for all elements
$x$ lying in the generating set (\ref{thegens}) for $\mathfrak{m}$.
This follows from Lemmas~\ref{allcase}, \ref{evencase} and \ref{oddcase},
using the definition of $\omega(u)$ from \eqref{omegadef}.


\end{document}